\documentclass[11pt, fleqn]{amsart}
\usepackage{amsmath, amssymb, amsthm} 
\usepackage{hyperref}
\usepackage{mathtools}
\usepackage{graphicx}
\usepackage{adjustbox}
\usepackage{caption}
\usepackage{subcaption}
\usepackage{booktabs}
\usepackage{tabularx}
\usepackage{float}
\usepackage{longtable}
\usepackage{tikz-cd}

\usepackage{listings}
\usepackage{xcolor}
\newtheorem{theorem}{Theorem}[section]
\newtheorem{lemma}[theorem]{Lemma}

\newtheorem{conjecture}{Conjecture}
\newtheorem{proposition}{Proposition}
\newtheorem*{proposition*}{Proposition}

\theoremstyle{remark}
\newtheorem{remark}{Remark}
\theoremstyle{plain}
\theoremstyle{definition}

\title
{Coherent sequences of matrix spectra}
\author{Barry Brent}
\email{axcjh@bu.edu}
\subjclass[2020]{05A17, 05C62, 11F11, 11P83}
\keywords{partitions, graphs, cusp forms}
\begin{document}
\begin{abstract}  
We attach sequences of matrix spectra to sequences of rational numbers
and study them with numerical experiments.
The initial segments of the spectrum sequences
 associated
to each of several  rational sequences appear
to exhibit geometric regularities.
Identities  originating
in the theory of symmetric functions express
members of arithmetic sequences 
$\{h_n\}_{n \ge 0} \thinspace (h_0 = 1)$ as  $h_n = |J_{h,n}|/n !$ for particular matrices $J_{h,n}$, where $|\cdot|$ is the determinant. 
In this setting we studied numerically sequences $\{a(n)\}_{n \ge 1}$
obtained from the Fourier expansions of cusp forms.
Within the range of our observations, the sequences of spectra of certain ``treated'' matrices $\{J^{(c)}_{a,n}\}_{n=1,2,3,...}$ exhibit coherent behavior, meaning that the plots of the minimum moduli among the eigenvalues of  the $J^{(c)}_{a,n}$ appear to oscillate or to form approximately straight lines of non-negative slope once $n$ is large enough. 
\end{abstract}
\maketitle
\vspace{-0.5em}
\begin{center}
    %rm
    {22 August 2026}
\end{center}
\vspace{1em}
\section{Introduction}
We attach sequences of matrix spectra to sequences of rational numbers
and study them with numerical experiments.
The first 500 spectra associated
to each of several interesting sequences appear
to behave nicely.

Identities (\cite{M}, Lemmas 2.1 and 2.2 below)  originating
in the theory of symmetric functions express
members of arithmetic sequences $\{h_n\}_{n \ge 0}$ with $h_0 = 1$ as $h_n = |J_{h,n}|/n !$ for particular matrices $J_{h,n}$ where $|\cdot|$ is the determinant. (At times we will suppress the dependence of $J_{h,n}$ on $h$.)

For a square matrix $M$, let $\chi(M)$ be its characteristic polynomial. For constants $c$, we define an operator $J_{h,n} \mapsto J^{(c)}_{h,n}$ in Section 3 (``Treatments''.)  
Let $E_n^{(c)}$ be the 
set of zeros of $\chi(J_n^{(c)}(j))$ and let $\mu_n^{(c)} = \min |v|$, taking the minimum over $v$ in $E_n^{(c)}$.
Let $p_n$ be the $n^{th}$ prime, let $n \mapsto \tau(n)$ be Ramanujan's tau function, and let $\lambda_n = \tau(p_n)$.  Then for $h_0 = 1$ and $h_n = \lambda_n$ otherwise, it appears that for large-enough $n$ the graph of the pairs $(n,\mu_n^{(c)})$ lies on 
an oscillating curve for $1 \le c \le 3$ 
when $h_0 = 1$ and $h_n = \tau(p_n)$
(see Figures 2, 3 and 4).  More generally, for the other sequences $h$ we tested, plots for the treated matrices $J^{(c)}_{h,n} (n = 1, 2, ....$)  appear to behave coherently, meaning that either they appear, to the author's eye, to oscillate or to form approximately straight lines of non-negative slope after $n$ is large enough.

D. H. Lehmer asked whether or not tau has any zeros and showed (\cite{Le47}, Theorem 2)  that, if tau does have zeros, the smallest one has the form $\lambda_{n^*}$ for some $n^*$. In our setup, this would mean that the spectrum of $J_{\lambda, n^*}$ contains zero. This motivated our study of spectra associated to sequences. 

The behavior of $\mu_n$ data for the spectra of untreated  $J_{\lambda, n^*}$, on the other hand, are not obviously systematic. (They are plotted for an initial segment of 
$\{\lambda_n\}_{n \ge 1}$ 
in Figure 7.) The regularizing effect of treating these matrices, however, is dramatic. (See Figures 2 - 4.) The appearance of very nearly regular behavior in the treated matrices  raises the prospect that their spectra might be helpful in understanding Lehmer's question. But we have not found a way to guarantee,
for example, that the spectrum of $J_{\lambda, n^*}$ is zero-free when that of $J^{(c)}_{\lambda, n^*}$ is zero-free. That is also a part of our project. 

We saw similar coherent behavior 
in  sequences of Fourier coefficients of newforms coming from elliptic curves listed in Cremona's database \cite{LMFDB} (Figures 9 - 41), and in other cusp forms coming from spaces of higher weight and larger dimension (Figures 42 - 47), for which the vanishing of coefficients
is not uncommon.  We also collected data on simpler sequences that do not come from modular forms (Figures 48 - 49.) 

We relied on the LLM ``Claude'' \cite{Anthropic2026}
 to write code for signal analysis and for very fast generation of our colored visualizations.\footnote{See 
 \texttt{https://zenodo.org/records/21527548}.}
Because  waveforms in our plots evolve towards coherent behavior only for large-enough $n$, we had Claude write its signal analysis code to segment the data
at natural transitions and to analyze the part of the data with $n$ so large that the pairs $(n, \mu_n)$ belong to the final segment.  At the time of the current draft, Claude's signal analyses are available only within the corresponding SageMath Jupyter notebooks.
The Zenodo URL for each figure is listed in the appendix.

Claude has an idea about the coherent behavior we are examining. For now, we are treating this idea as a black box; we do not sketch it, or try to verify it. Instead we distilled a prediction based on it, which we tested with reasonable success (Sections 9 and 10 and Table 9.)\footnote{Claude's idea is here: \texttt{https://zenodo.org/records/21781169}.}

This version has been edited to come within arXiv guidelines. It omits many plots referred to within the text. The unedited version is here \newline
\texttt{https://zenodo.org/records/22063026}.
\section{Lemmas}
These lemmas appear, for example, in MacDonald's book \cite{M}, 
but the proofs
are not easy to locate; in fact, we
have not located them.
Home-made
proofs of these statements are stored in
our repository.\footnote{ Here: \texttt{https://zenodo.org/records/21528344}.}
\begin{lemma}\label{lem:eqD}
The equations  below are equivalent. (We will
refer to both of them as equation (D) in the sequel.)
Let $h_0=1$ and
\begin{equation}
n h_n = \sum_{r=1}^{n} j_r h_{n-r}\tag{D1}
\end{equation}
for $n \ge 1$.
With
$
H(t) = \sum_{n=0}^\infty h_n t^n$ and  $J(t) = \sum_{r=1}^\infty j_r t^r$,
\begin{equation}
t \frac{d}{dt}H(t) = H(t) J(t).\tag{D2}
\end{equation}
\end{lemma}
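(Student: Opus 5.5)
We treat everything as formal power series in $t$ (over $\mathbb{Q}$, or any commutative ring containing the $h_n$ and $j_r$). No convergence questions arise, and two series are equal exactly when all their coefficients agree. The plan is to compute the coefficient of $t^n$ on each side of (D2) and check that the resulting family of identities, indexed by $n \ge 0$, is precisely (D1) together with a trivial constant-term condition.

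\textbf{Left side of (D2).} Differentiating termwise gives $t\frac{d}{dt}H(t) = \sum_{n\ge 0} n h_n t^n$, so its coefficient of $t^n$ is $n h_n$. In particular its constant term is $0$.

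\textbf{Right side of (D2).} Since $J(t)$ has no constant term ($j_0$ is taken to be $0$), the Cauchy product gives
\[
H(t)J(t) = \sum_{n\ge 1}\Bigl(\sum_{r=1}^{n} j_r h_{n-r}\Bigr)t^n .
\]
Its constant term is therefore also $0$. The constant terms of the two sides thus agree automatically, and the hypothesis $h_0=1$ plays no role in this comparison.

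\textbf{The equivalence.} Matching coefficients, (D2) holds if and only if $n h_n = \sum_{r=1}^n j_r h_{n-r}$ for every $n \ge 1$, which is (D1). Each implication is just reading this coefficient comparison in one direction or the other. Assuming (D1), the two series have the same coefficients in every degree $n\ge1$ and both have constant term $0$, so they are equal. Conversely, assuming (D2), the equality of the $t^n$ coefficients for $n\ge1$ is exactly (D1).

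\textbf{Remark.} I expect no real obstacle here. The only points needing care are the index bookkeeping in the Cauchy product, namely that the sum starts at $r=1$ because $J$ has no constant term, and the observation that the $n=0$ coefficient imposes no condition. The normalization $h_0=1$ is not needed for the equivalence itself. It matters only afterwards, because it makes (D1) a recursion that determines $h_n$ from $j_1,\dots,j_n$ (or conversely $j_n$ from $h_1,\dots,h_n$), and this is presumably what the later determinant formulas $h_n=|J_{h,n}|/n!$ exploit.
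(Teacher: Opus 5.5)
Your proof is correct: comparing coefficients of $t^n$ in the formal power series $tH'(t)=\sum_{n\ge 0} n h_n t^n$ and $H(t)J(t)=\sum_{n\ge 1}\bigl(\sum_{r=1}^{n} j_r h_{n-r}\bigr)t^n$ is all that is needed. You are also right that the $n=0$ coefficient gives no condition and that $h_0=1$ plays no role in the equivalence itself. The paper does not print a proof of this lemma (it defers to ``home-made proofs'' in an external repository), but this coefficient comparison is the standard argument, and nothing is missing from yours.
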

With $f_0$ possibly defined as equal to one,    
let \(\overline{f}\) denote the sequence 
\(\{f_n\}_{n \in \mathbb{Z}^+}\).
Let $J_n(j)$ and $H_n(h)$ be the matrices 
\[ 
\left(
\begin{matrix}
j_1 & -1 & 0 & \cdots & 0 \\
j_2 & j_1 & -2 & \cdots & 0 \\
\vdots & \vdots & \vdots & \vdots & \vdots \\
j_{n-1} & j_{n-2} &  \cdots & j_1 & -n + 1\\
j_n & j_{n-1} & j_{n-2} & \cdots & j_1\\
\end{matrix}
\right )
\]
and
\[ 
\left(
\begin{matrix}
h_1 & 1 & 0 & \cdots & 0 \\
2h_2 & h_1 & 1 & \cdots & 0 \\
\vdots & \vdots & \vdots & \vdots & \vdots \\
nh_n & h_{n-1} & h_{n-2} & \cdots & h_1\\
\end{matrix}
\right),
\]
respectively.
\begin{lemma}
Let $h_0 = 1$.
    Equation (D) implies
that
\begin{equation}
j_n = (-1)^{n+1} | H_n(h)|. 
\end{equation}
and
\begin{equation}
n! h_n =  | J_n(j)|
\end{equation}
for $n \ge 1$.
\end{lemma}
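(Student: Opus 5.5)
Both identities come from solving the triangular linear system (D1) by Cramer's rule; the only real work is the sign bookkeeping.

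\textbf{The identity $n!\,h_n=|J_n(j)|$.} For $k=1,\dots,n$, write (D1) as
\[
\sum_{r=1}^{k-1} j_{k-r}h_r \;-\; k h_k \;=\; -j_k ,
\]
using $h_0=1$ to move $j_k h_0$ to the right. This is a lower-triangular system in the unknowns $h_1,\dots,h_n$. Its coefficient matrix $A$ has diagonal entries $-1,-2,\dots,-n$ and entry $j_{k-r}$ in position $(k,r)$ for $r<k$. Hence $|A|=(-1)^n n!$.

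By Cramer's rule, $h_n=|A_n|/|A|$, where $A_n$ is $A$ with its last column replaced by $(-j_1,\dots,-j_n)^T$. Rather than juggling column permutations, I would move that column to the front. This costs a factor $(-1)^{n-1}$ from the cyclic shift. Negating it costs another factor $-1$, and negating the remaining columns costs $(-1)^{n-1}$.

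After these operations the matrix has first column $(j_1,\dots,j_n)^T$, and its entries $-1,-2,\dots,-(n-1)$ sit on the superdiagonal. The remaining columns have $j$'s below the superdiagonal, exactly matching $J_n(j)$. The accumulated sign is $(-1)^{n-1}\cdot(-1)\cdot(-1)^{n-1}=(-1)^{n}$. Combined with $|A|=(-1)^n n!$, this gives $h_n=|J_n(j)|/n!$.

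I would check $n=1$ and $n=2$ directly as a sanity test: $|J_2|=j_1^2+j_2$ and $2h_2=j_1h_1+j_2$ with $h_1=j_1$. This pins down the sign conventions.

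\textbf{The identity $j_n=(-1)^{n+1}|H_n(h)|$.} Here I regard (D1) for $k=1,\dots,n$ as a lower-triangular system in $j_1,\dots,j_n$:
\[
\sum_{r=1}^{k} h_{k-r}\, j_r = k h_k .
\]
The coefficient matrix has $h_{k-r}$ in position $(k,r)$ and ones on the diagonal, so its determinant is $1$.

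Cramer's rule then gives $j_n=|B|$, where $B$ is this matrix with its last column replaced by $(h_1,2h_2,\dots,nh_n)^T$. Cyclically moving that column to the front produces exactly $H_n(h)$: first column $(k h_k)$, ones on the superdiagonal, and $h$'s below. The cyclic shift contributes the sign $(-1)^{n-1}=(-1)^{n+1}$, which gives the claim.

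\textbf{Main obstacle.} The only delicate point is verifying that the permuted matrices are literally $J_n(j)$ and $H_n(h)$ entry by entry, including the sign of the superdiagonal. The entry in row $k$, column $m$ (for $m\ge 2$) should be $j_{k-m+1}$ when $k\ge m$ and $-(k)$ when $m=k+1$.

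As an alternative check, I would expand $|J_n(j)|$ along its last row to obtain the recurrence $|J_n|=\sum_{r} j_r\,(n-1)!/(n-r)!\,|J_{n-r}|$. Setting $|J_m|=m!\,h_m$ turns this into exactly (D1), which gives an independent inductive proof of $n!\,h_n=|J_n(j)|$.
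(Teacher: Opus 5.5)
The paper does not prove this lemma in its text; it cites Macdonald and defers to notes in an external repository. Your Cramer's-rule argument therefore stands on its own, and it is the standard derivation. The $H_n(h)$ identity is handled correctly: the system in $j_1,\dots,j_n$ is unitriangular, and moving the replaced column to the front gives exactly $H_n(h)$ with sign $(-1)^{n-1}$. Your last-row recurrence is also correct.

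The sign bookkeeping for $n!\,h_n=|J_n(j)|$, however, is wrong as written, and this is the step you yourself called the only real work. After the cyclic shift (factor $(-1)^{n-1}$), column $m\ge 2$ is the old column $m-1$ of $A$. So it has $-(m-1)$ in row $m-1$ and $j_{k-m+1}$ in rows $k\ge m$. Columns $2,\dots,n$ therefore \emph{already} coincide with those of $J_n(j)$, exactly as you state under ``Main obstacle.''

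Negating them, as you propose, would put $+1,\dots,+(n-1)$ on the superdiagonal and $-j$'s below it, which is not $J_n(j)$. In addition, the product you display, $(-1)^{n-1}\cdot(-1)\cdot(-1)^{n-1}$, equals $(-1)^{2n-1}=-1$, not $(-1)^n$. Taken at face value it gives the wrong sign for even $n$.

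The correct accounting has only two factors: $|A_n|=(-1)^{n-1}\cdot(-1)\cdot|J_n(j)|=(-1)^n|J_n(j)|$. Dividing by $|A|=(-1)^n n!$ gives $h_n=|J_n(j)|/n!$. Your final sign is right, but only because the spurious negation and the miscomputed product were both written down. Drop the extra negation and the argument is complete.

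Alternatively, promote your ``check'' to the proof, since it avoids column-permutation signs entirely. Deleting row $n$ and column $m$ of $J_n(j)$ leaves a block lower-triangular matrix. Its diagonal blocks are $J_{m-1}(j)$ and a lower-triangular block with diagonal $-m,\dots,-(n-1)$. The cofactor sign $(-1)^{n+m}$ therefore cancels against $(-1)^{n-m}$, and
\[
|J_n(j)|=\sum_{r=1}^{n} j_r\,\frac{(n-1)!}{(n-r)!}\,|J_{n-r}(j)|,\qquad |J_0(j)|=1 .
\]
After substituting $|J_m(j)|=m!\,h_m$ and dividing by $(n-1)!$, this is exactly (D1). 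Induction on $n$ then finishes the proof.
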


Next we state a corollary of the
following theorem in Vein and Dale's book 
~\cite{Vein1999}. 
\begin{proposition}
(Vein and Dale, Theorem 4.23)
Let
\(A_n =\)
\[
\begin{pmatrix}
a_1 & -1 & 0 & \cdots & 0 \\
a_2 & a_1 & -2 & \cdots & 0 \\
\vdots & \vdots & \vdots & \ddots & \vdots \\
a_{n-1} & a_{n-2} & \cdots & a_1 & -(n{-}1) \\
a_n & a_{n-1} & \cdots &a_2& a_1
\end{pmatrix}^T,
\]
where the $T$ operator is matrix transpose.
Let \(B_n(x) = |A_n -xI_n|\),
so that 
\(B_n(x) = (-1)^n \chi_{A_n}(x)\). 
 Then 
\[
B_n(x) = \sum_{r=0}^n 
\binom{n}{r}
|A_r|x^{n-r}.
\]
\end{proposition}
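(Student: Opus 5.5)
The plan is to reduce the statement to Lemma 2.2 by treating $x$ as a formal parameter absorbed into the first entry of the sequence. The transpose in the definition of $A_n$ does not affect determinants, so $|A_n| = |J_n(a)|$ with $j_r = a_r$. Subtracting $xI_n$ changes only the diagonal, and the diagonal of $J_n$ consists entirely of copies of $j_1$. Hence $A_n - xI_n$ is, up to transpose, exactly $J_n(a')$ for the modified sequence
\[
a'_1 = a_1 - x, \qquad a'_r = a_r \quad (r \ge 2).
\]
I work over the ring $\mathbb{Q}[x]$, where Lemmas 2.1 and 2.2 hold verbatim: their proofs only use that (D1) determines $h_n$ recursively from the $j_r$, which needs division by $n$ only.

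\textbf{Step 1 (set up the two sequences).} Let $h$ be the sequence attached to $a$ by (D1) with $h_0=1$. By Lemma 2.2, $|A_r| = r!\,h_r$, and we set $|A_0| = 1$. Let $h'$ be the sequence attached to $a'$ by (D1), again with $h'_0 = 1$. Then Lemma 2.2 gives $B_n(x) = |J_n(a')| = n!\,h'_n$.

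\textbf{Step 2 (relate the generating functions).} The generating functions satisfy $J'(t) = J(t) - xt$. I claim $H'(t) = H(t)e^{-xt}$. Indeed,
\[
t\frac{d}{dt}\bigl(H e^{-xt}\bigr) = \bigl(HJ - xtH\bigr)e^{-xt} = \bigl(He^{-xt}\bigr)J'(t),
\]
so $He^{-xt}$ satisfies (D2) for $J'$ and has constant term $1$. Since (D1) determines the coefficients uniquely once the constant term is $1$, Lemma 2.1 gives $H' = He^{-xt}$.

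\textbf{Step 3 (extract coefficients).} Comparing coefficients of $t^n$ gives
\[
h'_n = \sum_{r=0}^n h_r \frac{(-x)^{n-r}}{(n-r)!}, \qquad\text{so}\qquad n!\,h'_n = \sum_{r=0}^n \binom{n}{r}\, r!\,h_r\,(-x)^{n-r} = \sum_{r=0}^n \binom{n}{r}|A_r|(-x)^{n-r}.
\]

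The main obstacle is not any single computation but the sign convention. The case $n=1$ already reads $|a_1 - x| = a_1 - x$, whereas the displayed formula gives $a_1 + x$. So the argument above proves the identity in the form $|A_n - xI_n| = \sum_r \binom{n}{r}|A_r|(-x)^{n-r}$. Equivalently, by the same argument with $a'_1 = a_1 + x$ and $H' = He^{xt}$, it proves $|A_n + xI_n| = \sum_r \binom{n}{r}|A_r|x^{n-r}$.

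I would check which convention Vein and Dale use. If their $B_n$ is $|A_n + xI_n|$, the formula is exactly as displayed. Otherwise I would record the corrected sign, which does not affect the later use of the result for eigenvalue moduli up to reflection $x\mapsto -x$. Everything else is routine once the diagonal-shift observation is made.
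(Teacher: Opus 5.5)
Your argument is correct, and you are right that the identity cannot hold as displayed. At $n=1$ it gives $a_1+x$ instead of $a_1-x$. More generally, the coefficient of $x^n$ in $|A_n-xI_n|$ is $(-1)^n$, while the displayed sum has leading coefficient $1$.

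The identity you actually prove is $|A_n-xI_n|=\sum_{r}\binom{n}{r}|A_r|(-x)^{n-r}$. This is exactly the form the paper itself uses in the proof of Theorem 2.3, where it writes $(-1)^{n-r}x^{n-r}$ ``by Proposition 1''. That form yields the stated formula for $\chi(J_n(j))$, which checks at $n=1$ as $x-h_1$. So downstream no reflection $x\mapsto -x$ is needed; the corrected sign is already what the paper relies on.

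On the route: the paper gives no proof of this proposition. It imports Vein and Dale's Theorem 4.23 as a black box, and the sign is lost in transcription. Your derivation is self-contained and uses only Lemmas 2.1--2.2:
\begin{itemize}
\item the diagonal shift $a_1\mapsto a_1-x$ keeps $A_n-xI_n$ in the same family;
\item $J(t)\mapsto J(t)-xt$ forces $H(t)\mapsto H(t)e^{-xt}$, which is also transparent from $H=\exp\sum_r j_r t^r/r$;
\item the binomial sum is then just a product of exponential generating functions.
\end{itemize}
Compared with the citation, this pins down the sign and exposes the Appell-type structure $B_n'=-nB_{n-1}$. It also shows that Theorem 2.3 follows from Lemma 2.2 alone, so the appeal to Vein--Dale could be dropped entirely.

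The one step to tighten is the passage to $\mathbb{Q}[x]$. Instead of asserting that the lemmas' proofs go through verbatim, argue as follows. The quantities $|J_n(j)|$ and $n!\,h_n$ are polynomials in $j_1,\dots,j_n$ with rational coefficients that agree at all rational inputs, so they are identical polynomials. The substitution $j_1\mapsto j_1-x$ is therefore legitimate. Equivalently, you can simply work over the field $\mathbb{Q}(x)$.
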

The corollary is the following
\begin{theorem}\label{thm:2.3}\footnote{We codify an
argument shown to us by
J. L\'opez-Bonilla. See also the personal communication of A. Petovi\'{c} 
to
the author here \texttt{https://zenodo.org/records/21601154}.}
\rm{[J. L\'opez-Bonilla]} With $h$ and $j$ as in the above lemmas,
\[\chi((J_n(j))(x)=
\sum_{r=0}^n 
\binom{n}{r}
r!  h_r (-1)^r x^{n-r}.
\]
\end{theorem}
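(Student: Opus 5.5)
We plan to deduce Theorem~\ref{thm:2.3} directly from the Vein--Dale proposition by specializing $a_r = j_r$ and then rewriting the determinants $|A_r|$ using the second lemma. The proposition is stated for the transpose $A_n = J_n(j)^T$. Since a matrix and its transpose have the same characteristic polynomial, we may work with $A_n$ throughout.

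Define $\chi(M)(x) = |xI_n - M|$, so that $\chi_{A_n}(x) = (-1)^n B_n(x)$. This is consistent with the normalization $B_n(x) = (-1)^n\chi_{A_n}(x)$ in the proposition. The proposition then gives
\[
\chi(J_n(j))(x) = \chi_{A_n}(x) = (-1)^n \sum_{r=0}^n \binom{n}{r} |A_r|\, x^{n-r}.
\]
Next we use $|A_r| = |J_r(j)^T| = |J_r(j)| = r!\,h_r$ for $r\ge 1$, which follows from the second lemma applied to the sequences $h, j$ linked by equation (D). For $r=0$ we adopt the convention that the empty determinant equals $1 = 0!\,h_0$, and this is consistent since $h_0=1$. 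Substituting these values gives
\[
\chi(J_n(j))(x) = (-1)^n \sum_{r=0}^n \binom{n}{r} r!\,h_r\, x^{n-r}.
\]

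The remaining step, and the one we expect to be the real obstacle, is the sign bookkeeping. The formula just obtained carries an overall $(-1)^n$, whereas the statement has $(-1)^r$ inside the sum. These are not literally the same polynomial, so the discrepancy has to be traced to the sign convention of the proposition. To settle it, we will test the case $n=1$ directly. There $|A_1 - xI| = j_1 - x$. Under the convention $|A_1|=j_1$, the proposition's formula would instead give $x + j_1$. This suggests that the correct form of the Vein--Dale identity is
\[
|A_n - xI_n| = \sum_r \binom{n}{r} |A_r|(-x)^{n-r}.
\]
With that form, $\chi(x) = |xI - A_n| = (-1)^n|A_n - xI_n|$ becomes $\sum_r \binom{n}{r}(-1)^r |A_r|\, x^{n-r}$. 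This matches the statement exactly, with the $(-1)^r$ inside the sum.

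To make this rigorous independently of the source's conventions, we would reprove the needed identity by induction on $n$. The key fact is that $\frac{d}{dx}|xI - A_n|$ equals the sum of the principal $(n-1)$-minors of $xI - A_n$. For this Toeplitz-like matrix, each of these minors should equal $|xI - A_{n-1}|$ after deleting a row and a column: deleting row and column $k$ should leave a matrix of the same shape with superdiagonal entries $-1,\dots,-(n-1)$ suitably reindexed. This is the delicate point, and we would check it carefully. If it holds, then $\chi_n' = n\chi_{n-1}$. Integrating, with constant term $\chi_n(0) = (-1)^n|A_n| = (-1)^n n!\,h_n$, yields the binomial form with the $(-1)^r$ signs. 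If the minor claim fails for some $k$, we would fall back on checking $n=1,2$ by hand and adjusting the signs as the $n=1$ computation indicates.
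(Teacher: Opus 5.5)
Your main line is the paper's proof. You specialize Vein--Dale to $a_r=j_r$, use transpose invariance, replace $|A_r|=|J_r(j)|$ by $r!\,h_r$ via Lemma 2.2, and multiply by $(-1)^n$. Your sign diagnosis is also correct and agrees with what the paper actually does. Proposition 1 as printed, with $x^{n-r}$, already fails at $n=1$, but the paper's proof silently uses $|J_n(j)-xI_n|=\sum_r\binom{n}{r}|J_r(j)|(-x)^{n-r}$. So if you simply cite Vein--Dale in that corrected form, your argument is complete at the same level of rigor as the paper's.

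The independent induction you offer as a backup, however, rests on a false step. The principal $(n-1)$-minors of $xI-J_n(j)$ (equivalently of $xI-A_n$) are not individually equal to $|xI-J_{n-1}(j)|$:
\begin{itemize}
\item deleting row and column $1$ shifts the superdiagonal weights to $-2,\dots,-(n-1)$;
\item deleting row and column $k$ with $1<k<n$ leaves a $0$ on the superdiagonal, so the matrix becomes block lower triangular;
\item only $k=n$ returns $J_{n-1}(j)$.
\end{itemize}
For $n=3$ the three minors are $(x-j_1)^2+2j_2$, $(x-j_1)^2$ and $(x-j_1)^2+j_2$. Their sum is $3\chi(J_2(j))$, so $\chi_n'=n\chi_{n-1}$ holds only in aggregate. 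Your per-minor argument does not establish it, and checking $n=1,2$ by hand is not a substitute.

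A short self-contained replacement, which proves the corrected Vein--Dale identity itself, goes as follows. The series $\tilde H(t)=e^{-xt}H(t)$ satisfies $t\tilde H'=\tilde H\,(J(t)-xt)$ and $\tilde H(0)=1$. So by (D) its coefficients $\tilde h_n=\sum_{r=0}^n h_r(-x)^{n-r}/(n-r)!$ form the $h$-sequence attached to $\tilde j=(j_1-x,j_2,j_3,\dots)$. Since $j_1$ occupies exactly the diagonal, $J_n(\tilde j)=J_n(j)-xI_n$. Lemma 2.2, applied for each fixed $x$, then gives $|J_n(j)-xI_n|=n!\,\tilde h_n=\sum_{r=0}^n\binom{n}{r}r!\,h_r(-x)^{n-r}$. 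Multiplying by $(-1)^n$ gives the theorem.
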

\begin{proof}
Determinants are invariant under the transpose,
so, setting each $a_i$  equal to $j_i$ in the
lemmas above, the proposition can be restated in the following way:
Let
\(J_n(j) \) be as in the proof of Lemma 2.1.
Let \(C_n(x) = |(J_n(j) -xI_n|\),
so that 
\(C_n(x) = (-1)^n \chi({(J_n(j))}(x)\). 
 By Proposition 1, 
\[C_n(x) =
 \sum_{r=0}^n 
\binom{n}{r}
|J_r(j)|(-1)^{n-r} x^{n-r} = \]
(by Lemma 2.2)
\[
 \sum_{r=0}^n 
\binom{n}{r}
r!  h_r(-x)^{n-r}
= \sum_{r=0}^n 
\binom{n}{r}
r!  h_r (-1)^{n-r}x^{n-r}.
\]
Simplifying, 
\(
\chi({(J_n(j))}(x)=
\sum_{r=0}^n 
\binom{n}{r}
r!  h_r (-1)^r x^{n-r}.
\)
\end{proof}
\begin{remark}
This result allows us to bypass the calculation of determinants when
computing the polynomials $\chi$.  Its implementation is
described in Remark 3 below.
\end{remark}
Now let $M$ be an $n \times n$ matrix over a field.
 For any subset $S \subseteq \{1, 2, \ldots, n\}$, let $M[S,S]$ denote the submatrix of $M$ with rows and columns indexed by $S$. The following proposition is 
well known. (For example, see equation (1.2.13) in Horn and Johnson's book ~\cite{Horn2012}.)
\begin{proposition}
The coefficient $a_k$ of $x^k$ in $\chi(M)$
satisfies
\[
a_{n-k} = (-1)^k \sum_{\substack
{\#S = k\\{S \subseteq 
\{1, 2, \ldots, n\}}}} |M[S,S]|.
\]
\end{proposition}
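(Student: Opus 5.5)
The plan is to prove the proposition by expanding $\chi(M)(x) = |xI_n - M|$ with the Leibniz formula and using multilinearity in the columns. Write the $i$-th column of $xI_n - M$ as $x e_i + (-m_i)$, where $e_i$ is the $i$-th standard basis vector and $m_i$ is the $i$-th column of $M$. (If the paper's convention is $\chi(M)=|M-xI_n|$, the result changes only by the global factor $(-1)^n$, and I will adjust signs at the end.)

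First, expand the determinant multilinearly column by column. For each column we choose either the term $x e_i$ or the term $-m_i$. This writes $|xI_n - M|$ as a sum over subsets $S \subseteq \{1,\dots,n\}$, where $S$ is the set of columns in which $-m_i$ was chosen. The summand indexed by $S$ is the determinant of the matrix whose columns are $-m_i$ for $i\in S$ and $x e_i$ for $i\notin S$.

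Second, evaluate each summand. Factor $x$ out of each of the $n-\#S$ columns $x e_i$, and factor $-1$ out of each of the $\#S$ columns $-m_i$. Then do a Laplace expansion along the columns $e_i$ with $i \notin S$. Each such column has a single $1$ in row $i$, and that entry sits on the diagonal, so the cofactor sign is $+1$. Deleting these rows and columns leaves exactly $|M[S,S]|$. So the summand equals $x^{n-\#S}(-1)^{\#S}|M[S,S]|$. The only care needed here is the sign bookkeeping, and it comes out trivially because the removed entries are diagonal.

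Third, group the terms by $\#S = k$. The coefficient of $x^{n-k}$ is then $(-1)^k\sum_{\#S=k}|M[S,S]|$, which is the claimed formula for $a_{n-k}$. The case $k=0$ uses the convention that the empty minor equals $1$, giving leading coefficient $1$.

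The main obstacle is the sign in the second step, together with fixing the convention for $\chi$ used in the paper. Theorem \ref{thm:2.3} is stated with leading term $x^n$, which is consistent with the monic convention $|xI_n-M|$. Under that convention the statement holds exactly as written.
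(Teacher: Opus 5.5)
Your proof is correct: the column-by-column multilinear expansion of $|xI_n-M|$, the extraction of the factor $x^{n-\#S}(-1)^{\#S}$, and the Laplace expansion along the columns $e_i$ with $i\notin S$ give exactly $a_{n-k}=(-1)^k\sum_{\#S=k}|M[S,S]|$. The sign step holds because each such column has its single nonzero entry on the diagonal, and that entry stays on the diagonal as earlier rows and columns are deleted in matching pairs, so every cofactor sign is $+1$. The empty minor covers $k=0$.

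The paper gives no argument for this proposition. It treats it as well known and cites equation (1.2.13) of Horn and Johnson, so your proposal supplies a self-contained derivation where the paper relies on a reference.

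You can drop your hedge about conventions. The paper uses the monic convention throughout: Section 4 writes $\Pi_n(x)=|xI-J_{\lambda,n}(j)|$, and Proposition 1 has $B_n(x)=|A_n-xI_n|=(-1)^n\chi_{A_n}(x)$. That is exactly the convention under which you verified the formula.

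Your route also gives something the bare citation does not. It shows the contribution of each individual subset $S$, namely $(-1)^{\#S}|M[S,S]|\,x^{n-\#S}$. This term-by-term decomposition is what the paper's later remark is reasoning about when it tries to reconcile this proposition with Theorem 2.3 for $J_{\lambda,n}(j)$.
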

\begin{remark}
Let $h_0=1$ and $h_n = \tau(n)$ when $n$ is positive. Let 
$\{j_n\}_{n=1, 2, ...}$ be the sequence paired with $h$ in equation (D1) above. Now let $n$ be positive. Then 
$\chi (J_n(j))(0) = 0$ if and only if $|J_n(j)| = 0$, so, by Lemma 2.2.2,
$\chi(J_n(j))(0) = 0$ if and only if $\tau(n)=0$. Now if $\chi(J_n(j))$ is irreducible over the rational numbers, zero is not a root of $\chi(J_n(j))(x)$ and  $\tau(n) \neq 0$. A similar argument works if we consider the sequence for which $h_n = \lambda(n)$ when $n$ is positive. 

We will not focus on this kind of sufficient condition in the sequel. We mention it because it is the only such condition we know at the time of the present draft. Instead we explore a geometric phenomenon: under certain conditions, which so far we cannot crisply describe,
the eigenvalues of ``treated'' versions of the 
$\chi(J_n(x))$ (defined in the next section) appear to behave coherently as point sets as $n$ increases. We would like to gain control over this and use it to bound the distance of these eigenvalues away from zero for certain sequences $h$, then to devise a version of the argument to bound the eigenvalues of the original (``untreated'') matrices away from zero as well. This seems to be a challenging task. What we would like to do at this stage is simply to understand what it is about the $\chi(J_n(x))$ that induces the observed coherent behavior (when in fact they do so.) We have collected lists of these characteristic polynomials for some sequences $h$ (Fourier coefficients of  modular forms), which are available in data files generated in the SageMath \emph{Jupyter} notebooks. 

Except for the case of weight two modular forms attached to elliptic curves by the Modularity Theorem of Wiles and Taylor (which was analyzed by Claude, not by us\footnote{See an essay written by Claude about this under the file name \texttt{claude 32a1 notes v3} here \texttt{https://zenodo.org/records/21598903}.}), we have not found ways to predict the nature of the relevant plots simply from characteristics of the modular forms that they come from.  We can only say that, obviously, the nature of the plots are intrinsic features of the characteristic polynomials, and that the nature of the map from sequences $\{h_n\}$ to these intrinsic features is the main issue for us.\end{remark}
\section[]{Treatments
} 
Throughout this section, sequences $H$ and $J$ are to be understood as related to each other by equation (D) in Lemmas 1 and 2.
For a sequence 
$j = \{
x_n
\}_{n =1, 2, 3, ...}$, appearing in equation (D1) and a constant $c$,
we write $j^{(c)}=\{c,x_1, x_2, ...\}$.
The sequence $h^{(c)}$ is defined to be related to $j^{(c)}$ by equation (D1). (So it is \emph{not} formed by prepending $c$ to $h$.)
We refer to these objects as \emph{treatments} with $c$.
\begin{remark}
We computed many characteristic polynomials
$J_n(j^{(c)})$, and
the most straightforward workflow to find 
that object for a given $n$
 may be (concisely) diagrammed as follows:

\begin{tikzcd}
h_n
  \arrow[r, "{(D1)}"]
& j_n
  \arrow[r]
& j^{(c)}_n
  \arrow[r]
& J_n(j^{(c)}).
\end{tikzcd}

\noindent
In our experience, however,
the following workflow is more efficient: 

\begin{tikzcd}
h_n
  \arrow[r, "{(D1)}"]
& j_n
  \arrow[r]
& j^{(c)}_n
  \arrow[r, "{(D1)}"]
& h_n^{((c)}
  \arrow[r, "Th. 2.3"]
&[1em] \chi(J_n(j^{(c)})).
\end{tikzcd}

\noindent
(The dependency of $\chi(J_n(j^{(c)}))$ upon $h_n^{(c)}$
in the second workflow
is implicit in the statement of 
Theorem 2.3.)
\noindent
\end{remark}
\section{The sequence \texorpdfstring{$\{\lambda_n\}$}{\{lambda n\}}.}
  We define a sequence $j_{\lambda}$ by
requiring $j_{\lambda}(n)$ and $h_n=\lambda(n)$ together to satisfy equation (D). 
By Lemma 2.2 (2),
$\lambda(n) = |J_{\lambda,n}(j)|/n!$.
Let $\Pi_n(x) = \chi(J_{\lambda,n}(j))(x) = |xI - J_{\lambda,n}(j)|$ be the characteristic polynomial of $J_{\lambda,n}(j)$. Plots 2, 3 and 4 show 
the behavior of the minimum modulus
among the eigenvalues of $\Pi^{(c)}_n (c = 1,2, 3)$ 
and $\Pi_n$ respectively. They 
are relevant to Lehmer's question (does Ramanujan's tau function ever vanish?)
because 
the following statements are 
equivalent:\newline
(a) $\lambda(n) = \tau(p_n) = 0$, \newline
(b) $|J_{\lambda,n}(j)| = 0$, and
\newline
(c) $\Pi_n(0) = 0$. 
\begin{remark}
Writing\footnote{See \texttt{undeformed tauprime2nov25no3.ipynb} (\texttt{https://zenodo.org/records/21601531}).} $\Pi_n(x)
= \sum_{k=0}^n a_{n,n-k} x^{n-k}
$, Theorem 2.3 gives
\begin{equation}
a_{n,n-k} = (-1)^k k! \binom{n}{k}
\lambda(k)).
\end{equation}
\end{remark}
\begin{remark}
The number of terms in the sum in Proposition 2
is $\binom{n}{k}$, so the simplest way to reconcile
Proposition 2 and equation (3) is to suppose that the structural symmetries of 
$J_{\lambda, n}(j)$ somehow force
$|J_{\lambda,n}(j)[S,S]|= 
(-1)^k k! \lambda(k)$ just when $S \subseteq 
\{1, 2, \ldots, n\}$ and $\#S = k$. But this turns out to be too 
good to be true.\footnote{See \texttt{too good.ipynb} (\texttt{https://zenodo.org/records/21601751}.)} Instead, there must be
systematic cancellations
coming from those symmetries.
So far, we have not understood these cancellations.
\end{remark}
Let
$
\chi(J_{\lambda,n}^{(c)}(j)) =
\Pi_n^{(c)}(x)
$ (say.) 
Let $E_n^{(c)}$ be the 
set of zeros of $\Pi_n^{(c)}(x)$ and let $\mu_n^{(c)} = \min |v|$ with the minimum taken over $v$ in $E_n^{(c)}$; then it appears that the graph of the pairs $(n,\mu_n^{(c)})$ lies on 
an oscillating curve for $1 \le c \le 3$ (see Figures 2, 3 and 4). As a control, we repeated the calculation for $h_n = \tau(p_n+ 1)$ with $c = 1$. Figure 6 shows the resulting plot. It shows no oscillatory behavior for this choice of $h$, suggesting that the choice of prime indices \it{per se} \rm was decisive, but this is false. (See Remark 6.) 
The deep spikes in the lower, logarithmic plot in Figure 2 suggest that the approach of the roots of $\Pi_n^{(1)}(x)$ to the origin may behave in a predictable way, but the details are
not clear to us because (among other reasons) our data is limited.  We do not know the 
details of the relationship of the roots of 
$\Pi_n(x)$ to those of the $\Pi_n^{(c)}(x)$, so 
the connection, if any, to Lehmer's question is
unclear.

For contrast, we reproduce in Figure 7 a plot 
of the mimimum moduli for each $n$ for 
the untreated characteristic  
polynomials $\Pi_n(x)$. We have not attempted the sort of analysis 
we just discussed for the $\Pi_n^{(1)}(x)$.

We want a global-in-$c$ understanding of  the treated plots.
After prepending $c$ to 
$
j_{\lambda}
$, denoting this treatment as 
$
j_{\lambda}^{(c)}
$ 
and once more applying equation (D1), we computed the sequence 
$
h_{\lambda}^{(c)}
$ from 
$
j_{\lambda}^{(c)}
$. We formed the list of pairs 
$(c, n! 
h_{\lambda}^{(c)}(n))$. (These are just the values of the determinants from Lemma 2.2 in this situation; but now we bypass the determinants and gain significant time savings.) 
 We found that, 
within the range of our observations, these pairs interpolate a polynomial $D_n(c)$ (say) of degree $n$.
For given $n$, we found the minimum modulus among the roots of $D_n(c)$
and plotted these minima against $n$. In the case $c \in 
\{0, 1, 2, ...,\}$, the  result was
the same kind of wave form that displayed in Figures 2 - 4, as we see in Figure 5.\footnote{The reader will notice (if the corresponding \it{SageMath }\rm notebook is consulted) that we ran the routines twice, at 64-bit and 100-bit precision. The results were indistinguishable.} 
\section{Coherent behavior in newforms from elliptic curves} 
Because the cusp form $\Delta$ is the generating function of the multiplicative Ramanujan function, which displays coherent behavior in the range of our observations, we searched a source of cusp forms that are generating functions of other multiplicative functions: Cremona's database \cite{LMFDB} of cusp forms $\sum_n a(n) q^n$
associated to elliptic curves in virtue of the Modularity Theorem.
We found similar coherent behavior. (But see the next section for examples of coherent behavior coming from cusp forms the coefficient functions of which are not multiplicative.)

The forms in Cremona's datebase have weight two and varying
levels. In the limited range of our observations, there is much variation in the coherent behavior, or lack of it, in treated curves. 

In contrast to the inclusion of plots of the logarithms of minimum moduli among eigenvalues associated to matrices representing Ramanujan's  original tau function, we omit such plots for the curves in Cremona's database. The logarithm plots in the former situation served to demonstrate that the minima were not hitting zero, but for the Cremona database curves, they must occasionally
do so, because
the corresponding Fourier coefficients $a(n)$ themselves occasionally vanish.
\begin{remark}\label{rem:control}
Among our figures we have plots of eigenvalue minimum moduli for the sequences $h_n = a(n)$ and $h_n = a(p_n)$, but also for the sequences $h_n = a(p_n + 1)$. Our original intention was to introduce controls like the one we described above, which was meant
to provide some evidence for the thesis that the $\lambda_n = \tau(p_n)$ were producing oscillations because of arithmetic characteristics of the inputs. We expected to see a pattern in which the sequences $h_n = a(p_n)$ induced oscillations
among the minimal eigenvalues, and the sequences $h_n = a(p_n + 1)$
did not, but there is no such pattern. This undercuts the rationale behind out original controls for $h_n = \lambda_n$.
We have carried forward all  controls into this article anyway,
for obvious reasons.
\end{remark}
\section{Cusp forms from spaces of weight exceeding two}
We have begun to test for coherent
behavior among cusp forms belonging to spaces of weight
greater than two. Some Hecke eigenforms
in such spaces have non-rational Fourier expansions. Decimal estimates of their coefficients carry error that could explode in our calculations. To avoid this issue, we examined the sums of the eigenforms over the automorphisms of their coefficient fields that fix $\mathbb{Q}$ (the traces.) These objects have rational expansions.

To explain this, we sketch Hecke theory up to the definition of newforms. Denoting the space of cusp forms at level $N$ and weight $k$ as $S_k(\Gamma_0(N))$, for $f, g \in 
S_k(\Gamma_0(N))$ the Petersson inner product is defined as 
$\langle f, g \rangle = $
$$\int_{\Gamma_0(N) \backslash \mathfrak{H}} f(z) \overline{g(z)} \, y^k \, \frac{dx \, dy}{y^2}. $$
Let $S_k^{\mathrm{old}}(\Gamma_0(N))$ be the subspace of all linear combinations of $f(dz)$ where $f \in S_k(\Gamma_0(M))$ for some $M|N$ with $M < N$ and $d|(N/M)$. Let $S_k^{\mathrm{new}}(\Gamma_0(N))$ be the
orthogonal complement of $S_k^{\mathrm{old}}(\Gamma_0(N))$
with respect to the Petersson inner product. Then 
$
S_k(\Gamma_0(N)) = S_k^{\mathrm{old}}(\Gamma_0(N))
\oplus S_k^{\mathrm{new}}(\Gamma_0(N))
$.

Let $T_1$ be the identity operator on $S_k(\Gamma_0(N))$
and,
for primes $p \nmid N$, let the Hecke operator $T_p$ 
satisfy
$$
T_p\left(\sum a_n q^n\right)
=
\sum_{n\ge1} \left(a_{pn} + p^{k-1} a_{n/p}\right) q^n,
$$
where $a_{n/p} = 0$ if $p \nmid n$. For $e \ge 2$, let
$
T_{p^e} = T_p T_{p^{e-1}} - p^{k-1} T_{p^{e-2}}.
$

For prime divisors $p$ of $N$, let the operator $U_p$ satisfy
$$
U_p\left(\sum a_n q^n\right) = \sum_{n\ge1} a_{pn} q^n,
$$
and set $U_{p^e} = U_p^e$ where the exponent on the right side means repeated composition.
For  $n = \prod_p p^{e_p}$, let
$
T_n = \prod_{p \nmid N} T_{p^{e_p}} \cdot \prod_{p \mid N} U_{p^{e_p}}
$.
A \emph{newform} in $S_k^{\mathrm{new}}(\Gamma_0(N))$ is a cusp form
$f \in S_k^{\mathrm{new}}(\Gamma_0(N))$ with $a_1(f)=1$ that is a simultaneous eigenvector for all Hecke operators $T_p$ for $p \nmid N$ and for all $U_p$ such that $p \mid N$.

We consider a separate linear map to extract rational expansions from the newforms. 
For $v$ in a vector space $V$ and a linear map 
 $\mu: V \rightarrow V$,
let $\text{tr}(\mu)$ be sum of the eigenvalues of $\mu$. 
For a finite extension of fields $L/K$ viewed as
a vector space over $K$, let 
$\mu_{\alpha}(v) = \alpha v$. 
We write
$
\operatorname{Tr}_{L/K}(\alpha) = 
\text{tr}(\mu_{\alpha})$
and consider a finite extension $E/\mathbb{Q}$  
such that all of the Fourier coefficients
in the expansion of a newform $f$ lie in $E$. 
$E/\mathbb{Q}$ is separable, so  
we may write $E=\mathbb{Q}(\alpha)$ for
some $\alpha$ in $E$ by the Primitive Element Theorem. 
It is well known that
$
\operatorname{Tr}_{E/\mathbb{Q}}(\alpha)
$
is rational.\footnote{For example, combine
\cite{Orr} (Lemma 19) and 
\cite{Marcus} (corollary to Theorem $4^\prime$.)
}

In our first example, we inspected the unique-up-to-conjugacy 
newform $f$ in $S_4(\Gamma_0(11))$, 
whose coefficient field is 
$\mathbb{Q}(\alpha)$ with 
$\alpha^2 - 2\alpha - 2 = 0$ 
(so $\alpha = 1 \pm \sqrt{3}$). 
The space has dimension~2 and a single orbit 
under $\mu_{\alpha}$ of degree~2. 
We carried out our usual procedures for the sequences 
$
h_n = \operatorname{Tr}_{\mathbb{Q}(\sqrt{3})/\mathbb{Q}}\big(a_n(f)\big)
$
and
$
h^*_n = \operatorname{Tr}_{\mathbb{Q}(\sqrt{3})/\mathbb{Q}}\big(a_{p_n}(f)\big),
$
setting $h_0 = h^*_0 = 1$ to conform to the hypotheses of equation (D).
The minimum-modulus eigenvalue plots for our $J$ matrices are shown
in Figure 45. In this example the function $n \mapsto h_n$
is not multiplicative on positive $n$, and this was the first such example we found with oscillatory behavior.
\section{Other sequences}
We do not know when coherence among the minimum moduli occur in the figures below. We have searched unsuccessfully for deciding criteria among the characteristics of modular forms. Now we are looking for such criteria among the characteristic polynomials. We are guessing that such criteria, if there are any, will be easier to spot for sequences $h$ that are constructed as simply as possible.
As a beginning, in Figures 48A we display the treatments with $c = 1$ of a minimum modulus plot for
the sequence $h_0=1, h_n = p_{n+1}-p_n-2, n \ge 1$. In Figure 48B, we show the corresponding plot for the sequence 
$\{h_n\}_{n \ge 0}$ 
with $h_0 = 1$ and $h_n = p_n$ for $n \ge 1$.
In Figure 48C, we show the corresponding
plot for $\{p_{n+1}-p_n\}_{n \ge 1}$.
 In Figure 49, again treated with $c = 1$, we depict minimum modulus plots for
$\{n^a\}_{n \ge 1},  1 \le a \le 4$. 
\section{Zeros among untreated and treated \emph{h}-sequences.}
We want to understand the relationship of treated and untreated
$h$-sequences, so we counted the number of zeros of the treated and untreated $h$-sequences that we have studied empirically. Table 1 counts the zeros among length-$500$ truncations of untreated $h$-sequences Cremona's from database of the expansions of weight two modular forms attached to elliptic curves in virtue of the Modularity Theorem \cite{Wiles, TW, BCDT}. (None of the $h$-sequences treated with $c = 1$ have zeros in this range.) Table 2 counts the zeros among the coefficients with prime index for both untreated and treated $h$-sequences (with $c = 1$.) Table 3 counts the zeros among the coefficients with index $p_n + 1, n = 1, 2, ..., 500$ for both untreated and treated $h$-sequences, again with $c = 1$.\footnote{See \texttt{https://zenodo.org/records/21831972}.}

We then formed $h$-sequences corresponding to the coefficients in truncations of precision $505$ of expansions of modular forms of weights $k$, level $N$  with $2 \le k \le 20, 
1 \le N \le 20$, using native capabilities of \emph{SageMath} to find them. It found $174$ forms. We used the treatment process of Section 3 to form the finite sequences
$\{h^{(1)}_n\}_{1 \le n \le 505}$ and found no zeros in 
them.\footnote{See \texttt{https://zenodo.org/records/21855317}.} After some preliminary tests varying $c$, it appears to be possible that even though the untreated sequences 
$\{h_n\}$ 
contain zeros, the treated sequences $\{h^{(c)}_n\}$ never do; this appears to be consistent with the observation above that no treated $h$-sequence derived from Cremona's database contains a zero in the observed range.
\section{Observations on the figures.}
In Tables 4 - 8, we visually classified the collection of plots for Cremona curves in the section ``Figures'' as nearly linear (``L'') oscillating (``O''), or oscillating around a line (``L/O''). 

Recalling the  terminology of the  Introduction: with $E^{(1)}$ depending on the matrix $J^{(1)}$ appropriate to the figure under discussion, if $R = \#\{v \in E_n^{(1)} \text{ s. t. } |v| = \mu_n^{(c)}
\text{ and } v \in \mathbb{R}\}$
denotes the number 
of real characteristic-polynomial zeros of minimum modulus for an index $n$ and $C$ is the number of such zeros with non-zero imaginary part for the same index, then  Tables 4 - 8 evaluates $R$ and $C$ for $n$ sufficiently large and records the largest $n$ ($=F$ in the tables) out of $500$
that breaks the pattern.\footnote{
\url{https://zenodo.org/records/22038430},
\url{https://zenodo.org/records/22018839}, 
\url{https://zenodo.org/records/22038318}.} 
We found plausible regularities for most $n$ residue classes modulo $6$ that we have examined so far. (We made an arbitrary choice to consider triples such that $F \le 399$ to be ``plausible.'')

By now we have computed a substantial number of rows, so we venture the following 
\begin{conjecture} For any Cremona curve,
either $(R,C) = (1,0)$ or $(R,C) = (0,2)$.
\end{conjecture}
\begin{remark}
We should emphasize that $R$ and $C$ count zeros, not complex-conjugacy classes of zeros.
\end{remark}
Table 4 comes close to permitting another conjecture, but we would then need to understand how to disqualify curve 44a1 from the claim, and we're not quite there yet. (Once again, Claude has its own thoughts about this.\footnote{See {\url{https://zenodo.org/records/22038004}}.})
If we ignore the anomalous curve 44a1, it appears that for the full sequence $h_n = a(n)$ of cusp forms attached to the Cremona curves we studied, $(R,C) = (1,0)$ if and only if the associated plot oscillates. This is the first such (nearly) perfect correlation of the character of the plots with another invariant of the curves that we have seen. Two unfortunate things about this finding are that the L \emph{vs.} O classification is so far entirely subjective and that we cannot explain either side of this correlation by means of the other: neither invariant is understood (by us, anyway.) 

Apparently we can remedy one of these problems--Claude advised us to think about the zeros of the cusp forms themselves: are they real, or do they have non-vanishing imaginary parts? We tested truncations of the  series that represent these forms and found that the results pair consistently with our subjective L \emph{vs.} O classification, hence, with our $(R,C)$ calculations too (next section, which Claude wrote and the present writer edited a little.) 

The present writer is no adept at Fourier analysis, so it is not absolutely clear to him how to describe the ``O'' class objectively; the figures below show a great deal of variation among the ones he wants to classify as oscillatory. (But see the signal analyses by Claude in our \emph{SageMath} notebooks.) We do expect that statistically assessing the linearity of the plots in the ``L'' class will not be challenging, and then we will be able to class the plots as ``L \emph{vs} not-L'' to render an objective partial verdict on the character of the plots.
\section{Minimal zeros of truncated generating functions curve by curve}
\label{sec:dominantZero}
\subsection*{The sequences tested}
For each elliptic curve $E$ in the list of thirty-four Cremona labels
appearing in the Figures section, let $a(n)$ denote the Fourier
coefficients of the weight two newform attached to $E$ by the Modularity
Theorem, and let
\[
h_0 = 1, \qquad h_n = a(n) \quad (1 \le n \le 200),
\]
so that $h$ is the untreated $h$-sequence of the figures with the
constant term $h_0=1$ prepended (replacing $a(0)=0$). The coefficients
were computed by \emph{SageMath}'s \texttt{E.anlist} (PARI
\texttt{ellan}) from the Cremona label.

\subsection*{The quantity computed}
For truncation degrees $M \in \{80, 110, 140, 170, 200\}$ form the
integer polynomial
\[
P_M(t) \;=\; \sum_{n=0}^{M} h_n\, t^n .
\]
(The Hasse bound (\cite{Hasse36} and Theorem V.1.1 in \cite{Silverman2009}) $|a(n)| \le d(n)\sqrt{n}$ gives the power series
$\sum h_n t^n$ radius of convergence $1$, so zeros of modulus below $1$
are meaningful objects of the series, approximated by those of its
truncations.) All $M$ roots of $P_M$ were computed in double precision
(companion-matrix eigenvalues, NumPy) and sorted by modulus. The root
of smallest modulus, $t_0(M)$, was then polished by Newton iteration at
forty decimal digits of working precision (\texttt{mpmath}) on the exact
integer coefficients, and normalized to the closed upper half plane
($\operatorname{Im} t_0 \ge 0$; the coefficients are real, so nonreal
roots occur in conjugate pairs and this loses nothing). Write
\[
t_0 = \rho e^{i\theta}, \qquad
\rho_2 = \text{modulus of the next-nearest root of } P_{200},
\]
where the conjugate $\overline{t_0}$ is excluded from the computation
of $\rho_2$, so that $\rho_2/\rho$ measures how well the smallest zero
(or conjugate pair) is separated from all other zeros.

\subsection*{Acceptance criteria and verdicts}
The following thresholds were fixed before the computation was run.
\begin{enumerate}
\item \emph{Stability in the truncation degree.} The polished values
$t_0(140)$, $t_0(170)$, $t_0(200)$ must agree to within a relative
difference of $10^{-2}$. (In the event, they agreed to all reported
digits for every curve: for the values of $\rho$ that occurred, the
truncation tail $\rho^{M}$ at $M = 140$ is already at or below the
double-precision level, e.g.\ $0.74^{140} < 10^{-18}$.)
\item \emph{Isolation.} The separation ratio must satisfy
$\rho_2/\rho > 1.05$.
\item \emph{Reality.} The zero is declared real when
$|\operatorname{Im} t_0| < 10^{-8}\,|t_0|$ after polishing, and complex
otherwise.
\end{enumerate}
Each curve then receives exactly one of three verdicts:
\begin{itemize}
\item[\textbf{R}\,:] criteria (1) and (2) hold and the smallest zero of
$P_M$ is \emph{real} (in every observed case, real and negative, i.e.\
$\theta = \pi$);
\item[\textbf{C}\,:] criteria (1) and (2) hold and the smallest zero is
a \emph{complex conjugate pair} ($0 < \theta < \pi$);
\item[\textbf{I}\,:] criterion (1) or (2) fails --- the smallest zero
is not stably resolved or not isolated --- and no reality verdict is
issued.
\end{itemize}
(Here, the \emph{verdicts} \textbf{R}  and \textbf{C} should not be confused with the \emph{zero-counts} $R$ and $C$ in the tables preceding Table 9; we will repair the notational clash in a subsequent draft.) The verdict concerns a single global object attached to the whole
sequence $h$ --- the location of the zero of $\sum h_n t^n$ nearest the
origin --- and is computed without reference to the matrices $J_n$,
their spectra, or the plots of $\mu_n$; the pairing with the
``L''/``O'' column of Table~\ref{tab:dominantZeroPerCurve} therefore compares two independently produced
classifications of the same list of curves.

\subsection*{Results}
% ===== BEGIN machine-generated content (do not edit by hand) =====
% ===== source: make_dominant_zero_section.py; to update, re-run =====
% ===== the script and paste the regenerated file here verbatim  =====
Summary of the pairing: of the thirty four curves, thiry one received a conclusive
verdict and three were inconclusive (\texttt{44a1}, \texttt{45a1},
\texttt{49a1}; separation ratios $1.004$, $1.024$, $1.021$).
The curve \texttt{46a1}, classified L/O in Table~4, received verdict \textbf{R}
and is excluded from the count below as ambiguous on the visual side.
On all thirty rows where both classifications are unambiguous, the
correspondence $\textbf{R}\leftrightarrow\mathrm{L}$,
$\textbf{C}\leftrightarrow\mathrm{O}$ holds: thirty of thirty.
We will venture the perhaps questionable argument that (if the eight \textbf{R} verdicts were distributed at random among the thirty unambiguous
rows) the probability of exactly reproducing the set of L rows would be rather
low.\footnote{For the testing code, see \url{https://zenodo.org/records/22062616}.}
% ===== END machine-generated content =====

\clearpage
\section{Tables}
\begin{table}[htbp]
\setlength{\parindent}{0pt}
\raggedright
% [inline block 0: 21 envs, 61070 chars in 4 pieces, piece 1 here, a bare % at each other -> data_tex | \begin{tabular}{l|*{7}{c}} Cremona label & 11a1 & 14a1 & 15a1 & 17a1 & 19a1 & 20a1 & 21a1 \\...]

\caption{Number of zeros among the first $500$ coefficients in the
Fourier expansions of weight two modular forms associated to elliptic curves with the given labels in virtue of the Modularity Theorem. See \texttt{https://zenodo.org/records/21831972}.}
\end{table}
\begin{table}[htbp]
\setlength{\parindent}{0pt}
\raggedright
%
\caption{Number of zeros of treated and untreated $h$-sequences among the first $500$ coefficients of index $p_n, n = 1, 2, ..., 500$ in the
Fourier expansions of weight two modular forms associated to elliptic curves with the given labels in virtue of the Modularity Theorem. See \texttt{https://zenodo.org/records/21831972}.}
\end{table}
\begin{table}[htbp]
\setlength{\parindent}{0pt}
\raggedright
%
\caption{Number of zeros of treated and untreated $h$-sequences among the first $500$ coefficients of index $p_n + 1, n = 1, 2, ..., 500$ in the
Fourier expansions of weight two modular forms associated to elliptic curves with the given labels in virtue of the Modularity Theorem. See \texttt{https://zenodo.org/records/21831972}.}
\end{table}
\clearpage
\clearpage
\begingroup
\small
\renewcommand{\arraystretch}{0.88}
\setlength{\tabcolsep}{4.7pt}
\setlength{\LTcapwidth}{\textwidth}
%
\endgroup
% ===== END machine-generated content =====

\clearpage
\clearpage
\section{Figures}
Except for plots referred to as visualizations, supporting data is in SageMath notebooks listed in the index of figures after the bibliography. The plots referred to as visualizations were generated in the notebooks stored at \texttt{https://zenodo.org/records/21766174}. Points in the complex upper half-plane that are mapped by the underlying modular form to the closure of the unit disk are color-coded according to the quadrants where the images of the colored points lie, so that the junction of four brightly-colored regions at a point indicates that the point is a zero of the form. Points that land in the exterior of the unit disk are colored white, black, or one of two shades of gray according to the
quadrants of their images.
\begin{figure}[H] % Fig 1
    \centering
    \begin{subfigure}[t]{0.48\textwidth}
        \centering
        \includegraphics[width=\textwidth]{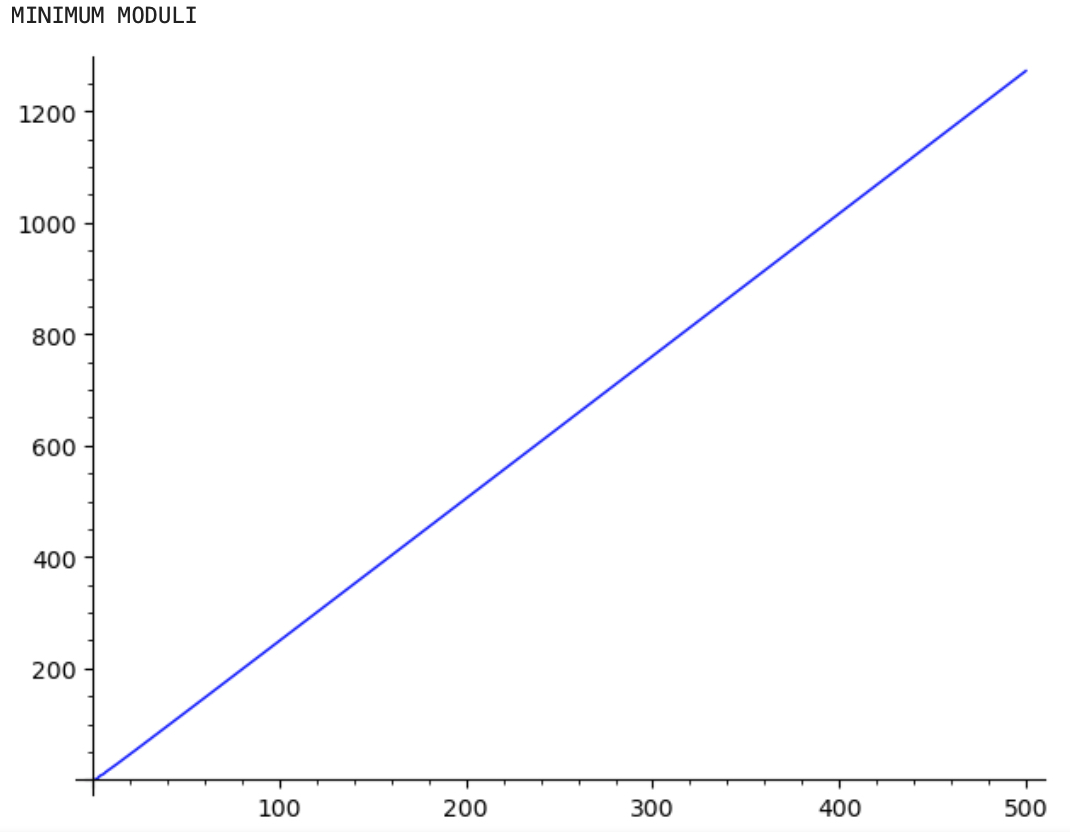}
        \caption{$c=0$.}
        \label{fig:def_tau}
    \end{subfigure}
     \begin{subfigure}[t]{0.48\textwidth}
        \centering
        \includegraphics[width=\textwidth]{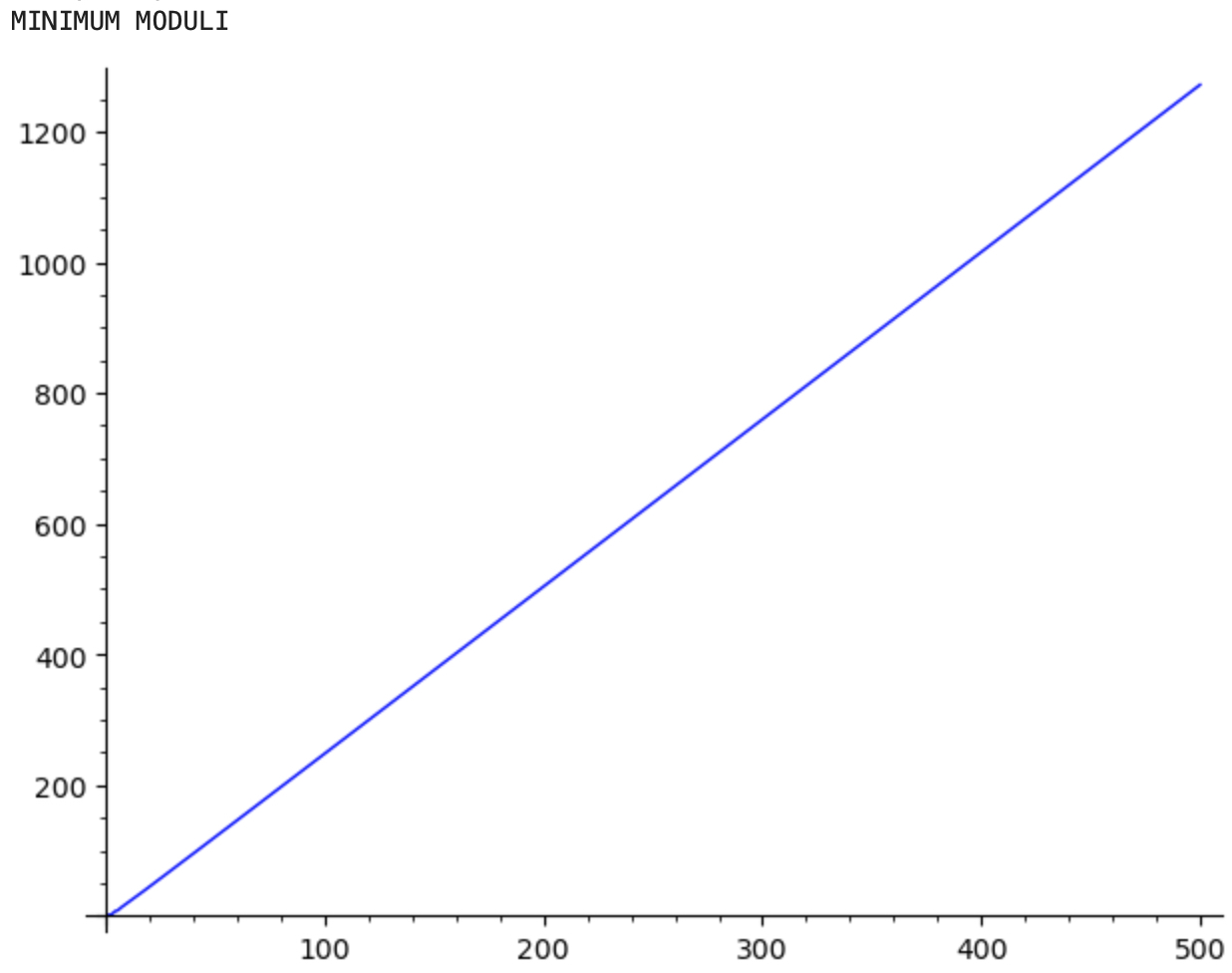}
        \caption{$c=1$.}
        \label{fig:c1_tau}
    \end{subfigure}
    \begin{subfigure}[t]{0.48\textwidth}
        \centering
        \includegraphics[width=\textwidth]{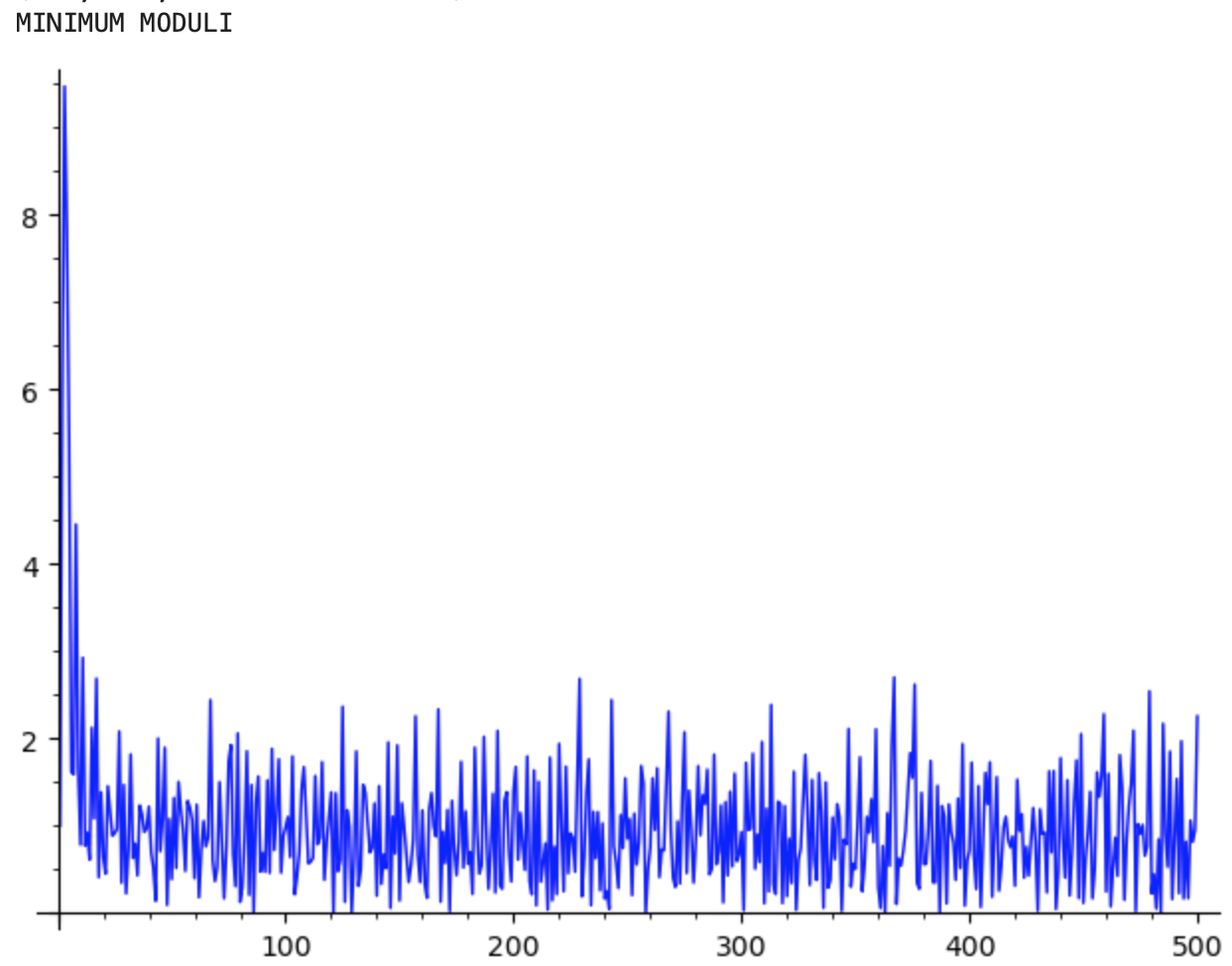}
        \caption{Untreated.}
        \label{fig:no_c}
    \end{subfigure}
    \caption{Minimum moduli for $h_n = \tau(n)$.}
\end{figure}
\begin{figure}[H]%fig 2
    \centering
\includegraphics[width=1\textwidth]{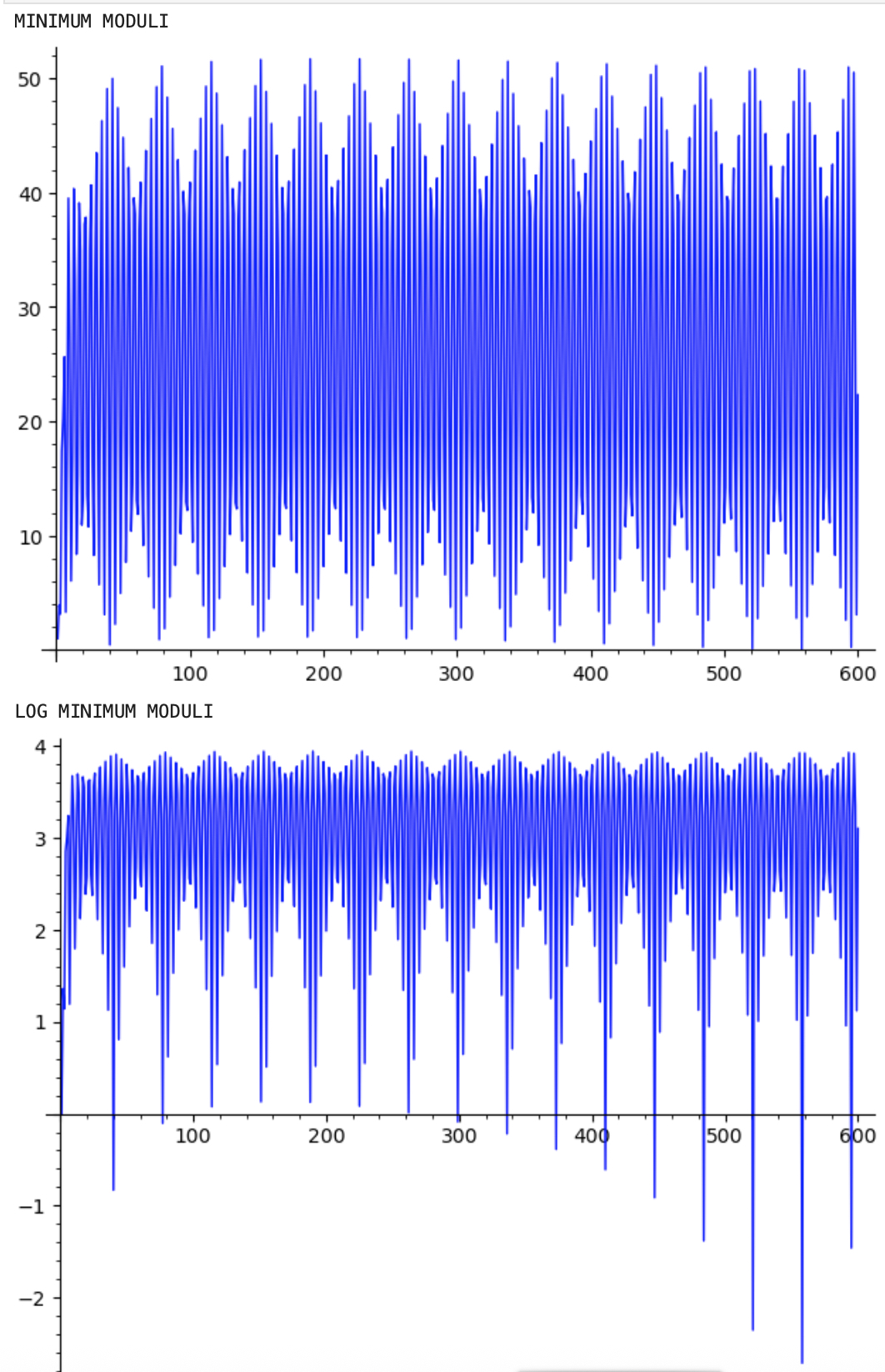}
    \caption{$c = 1, h_n = \lambda_n$}
\end{figure}
\begin{figure}[H] %fig 3
    \centering
    %\hspace*{0em}
    \includegraphics[width=1\textwidth]{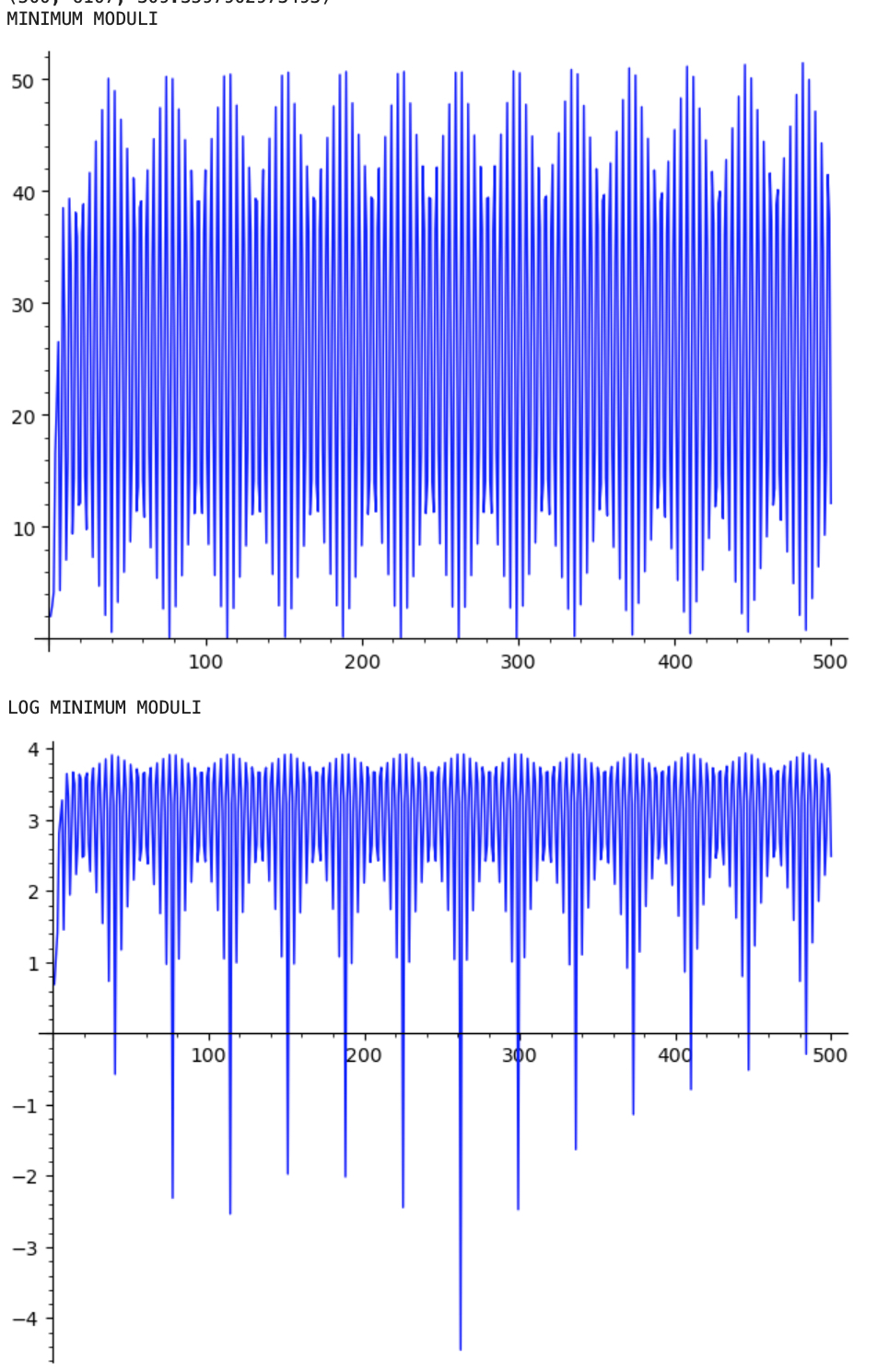}
    \caption{Minimum moduli for $h_n = \lambda_n$
   with $c = 2$.} 
    \label{fig:tauprime_c_2}
\end{figure}
\begin{figure}[H] % fig 4
    \centering
    %\hspace*{0em}
    \includegraphics[width=1\textwidth]{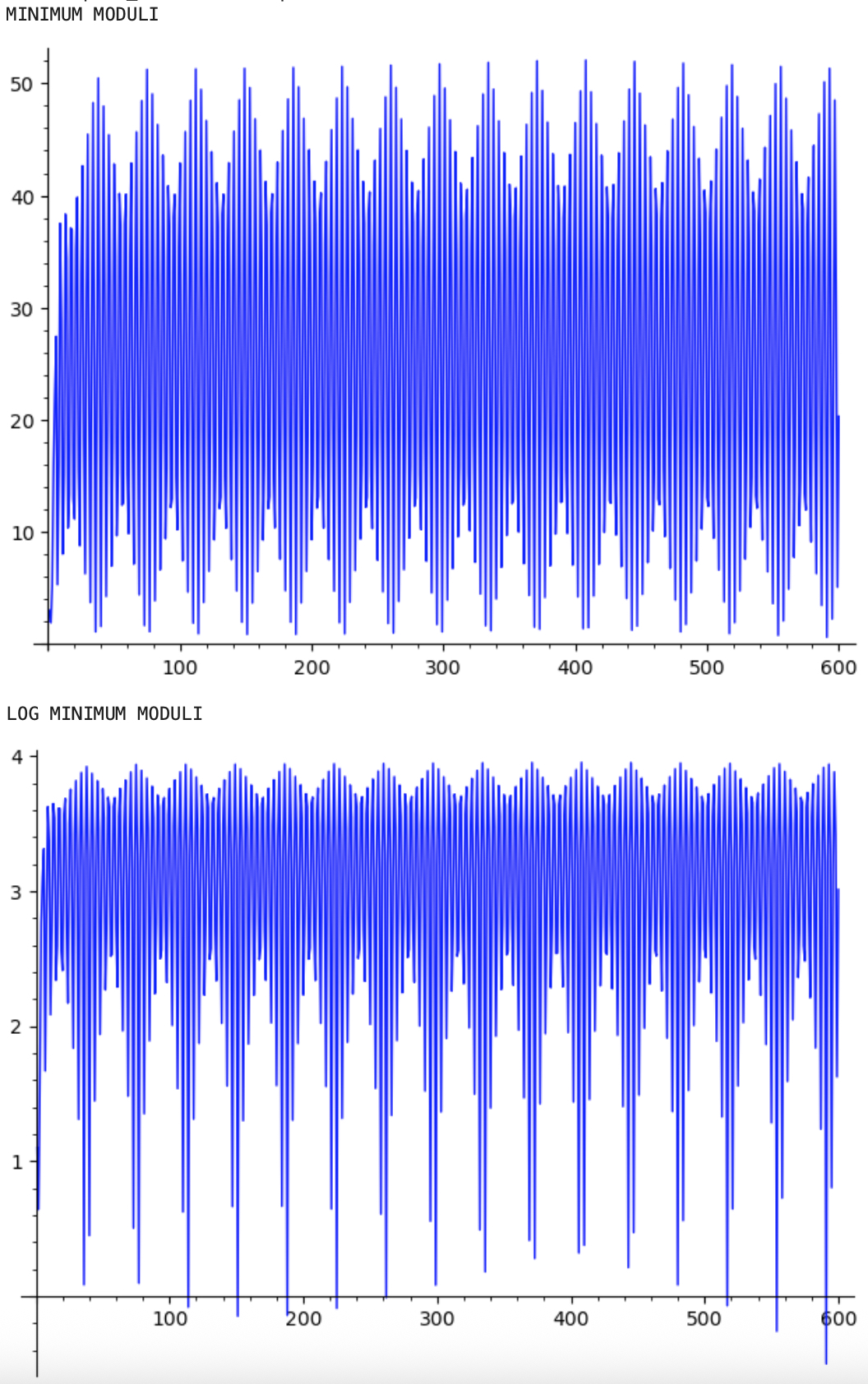}
    \caption{Minimum moduli and their logarithms for $h_n = \lambda_n$
   with $c = 3$.} 
    \label{fig:tauprime_c_3}
\end{figure}
\begin{figure}[H] % fig 5
    \centering
    %\hspace*{0em}
    \includegraphics[width=1\textwidth]{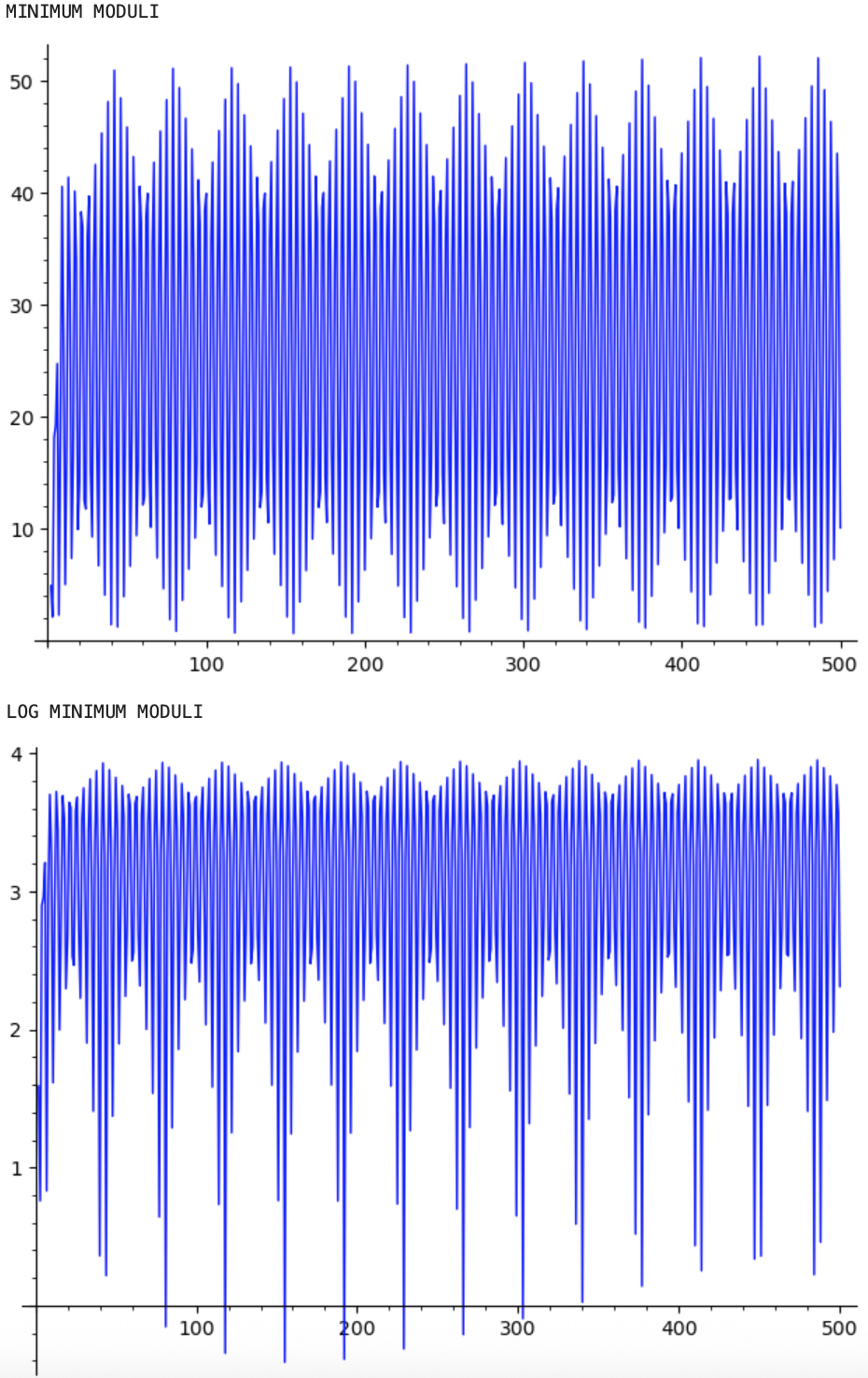}
    \caption{Minimum moduli over many values of $c$ for $h_n = \lambda_n$.}
    \label{fig:tauprime_many_c}
\end{figure}
\begin{figure}[H] % fig 6
    \centering
    %\hspace*{0em}
    \includegraphics[width=1\textwidth]{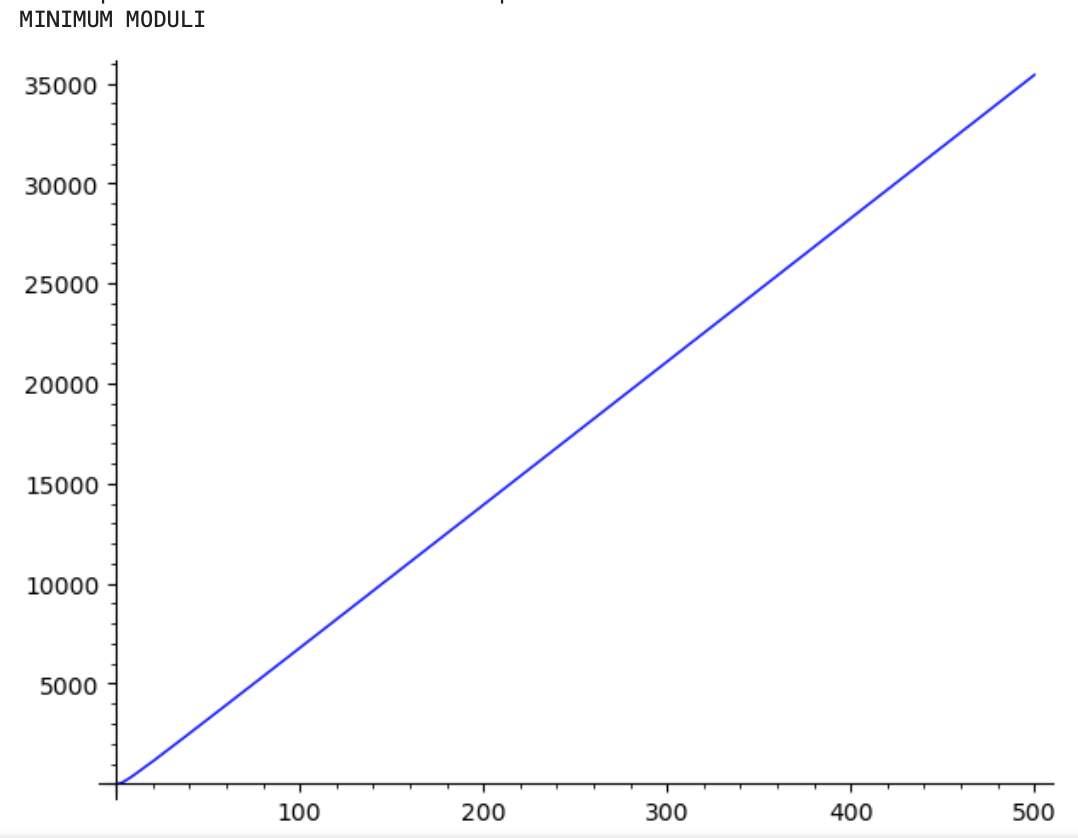}
    \caption{Minimum moduli for $h_n = \tau(p_n+ 1)$ with $c = 1$.} 
    \label{fig:tauprime_control}
\end{figure}
\begin{figure}[H]
    \centering
    %\hspace*{0em}
    \includegraphics[width=1\textwidth]{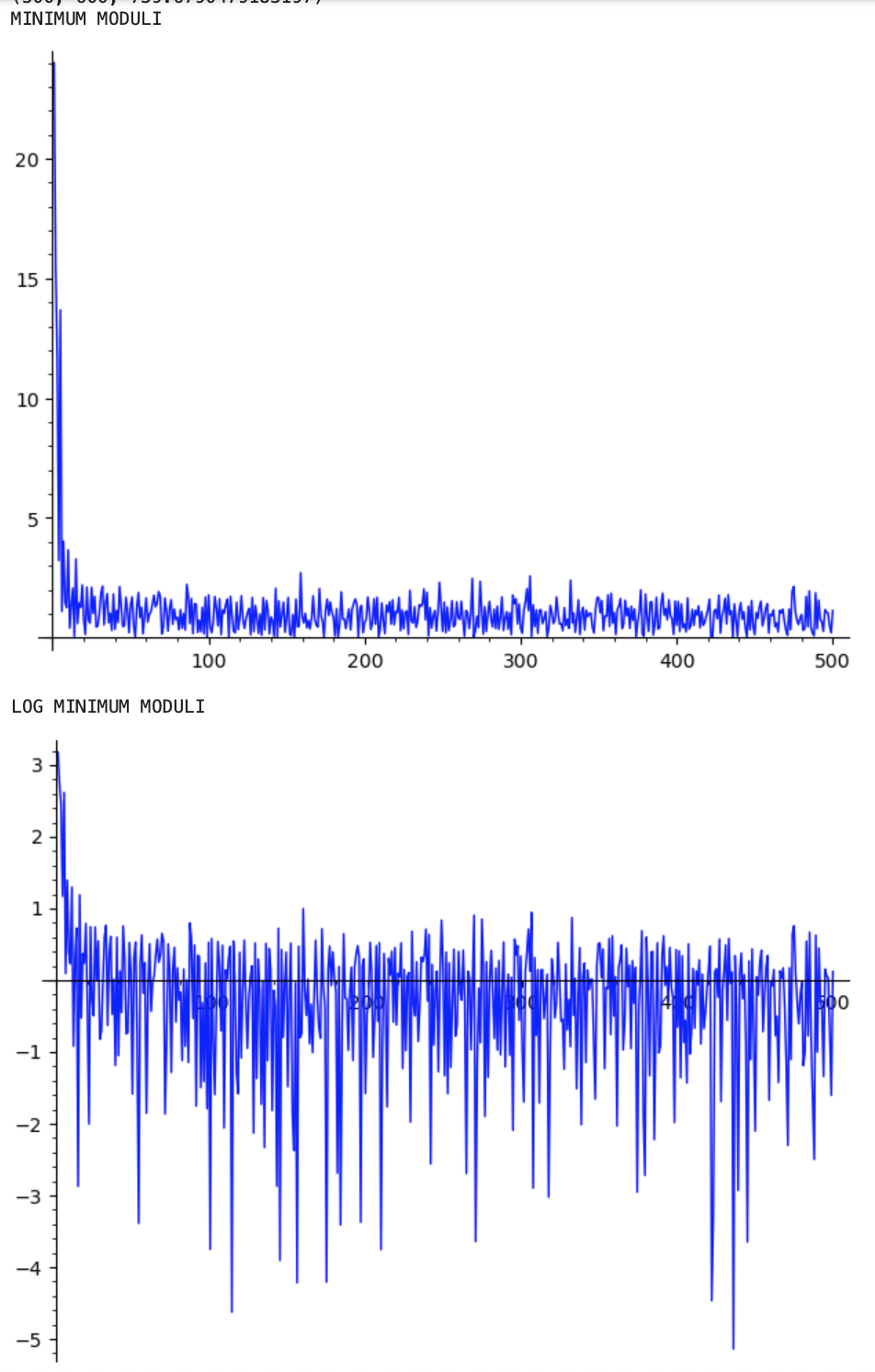}
    \caption{Minimum moduli for $h_n = \lambda_n$ before treatment.}
    \label{fig:undeformed_primeTau_min_moduli}
\end{figure}
\begin{figure}[H]
    \centering
    \begin{subfigure}[t]{0.48\textwidth}
        \centering
        \includegraphics[width=\textwidth]{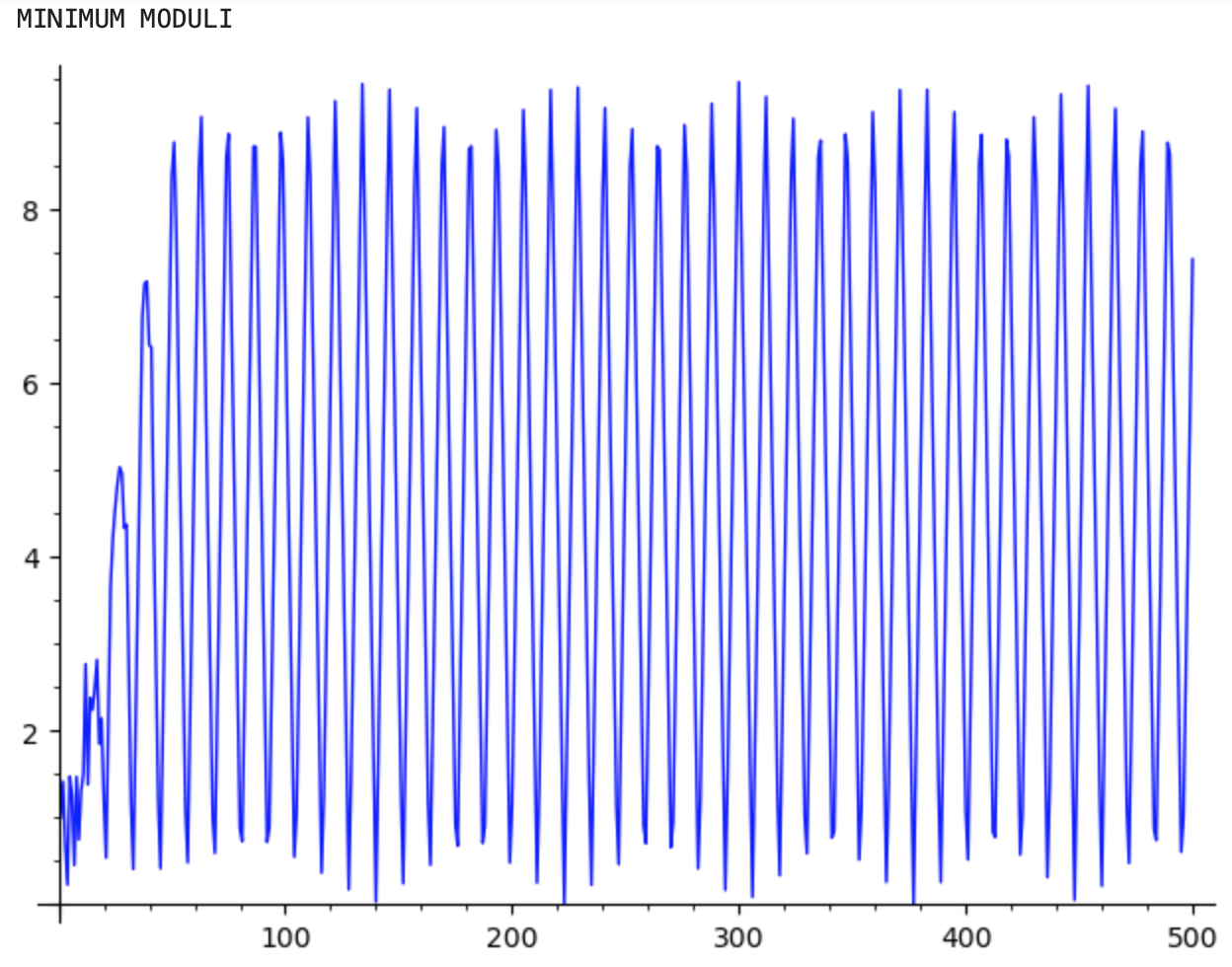}
        \caption{$c=1, h_n=a(n)$.}
        \label{fig:curve_11a1_all}
    \end{subfigure}
     \begin{subfigure}[t]{0.48\textwidth}
        \centering
        \includegraphics[width=\textwidth]{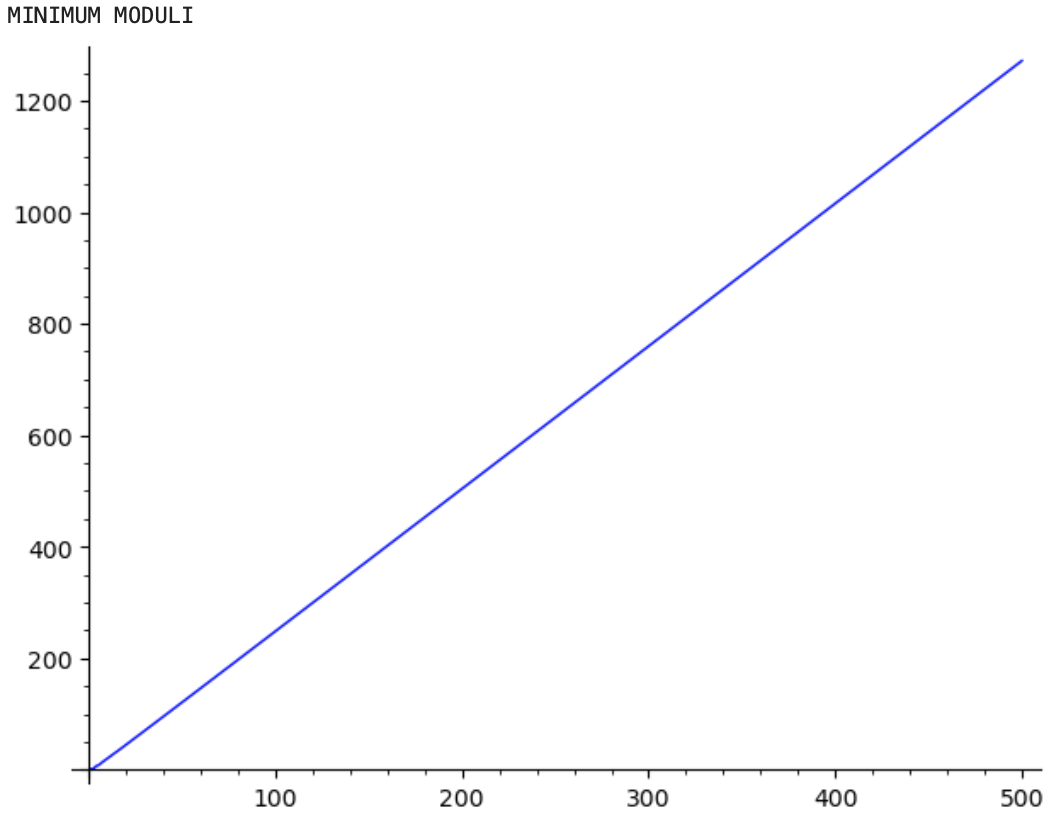}
        \caption{$c=1, h_n = a(p_n)$.}
        \label{fig:curve11a1_primes}
    \end{subfigure}
    \hfill
    \begin{subfigure}[t]{0.48\textwidth}
        \centering
        \includegraphics[width=\textwidth]{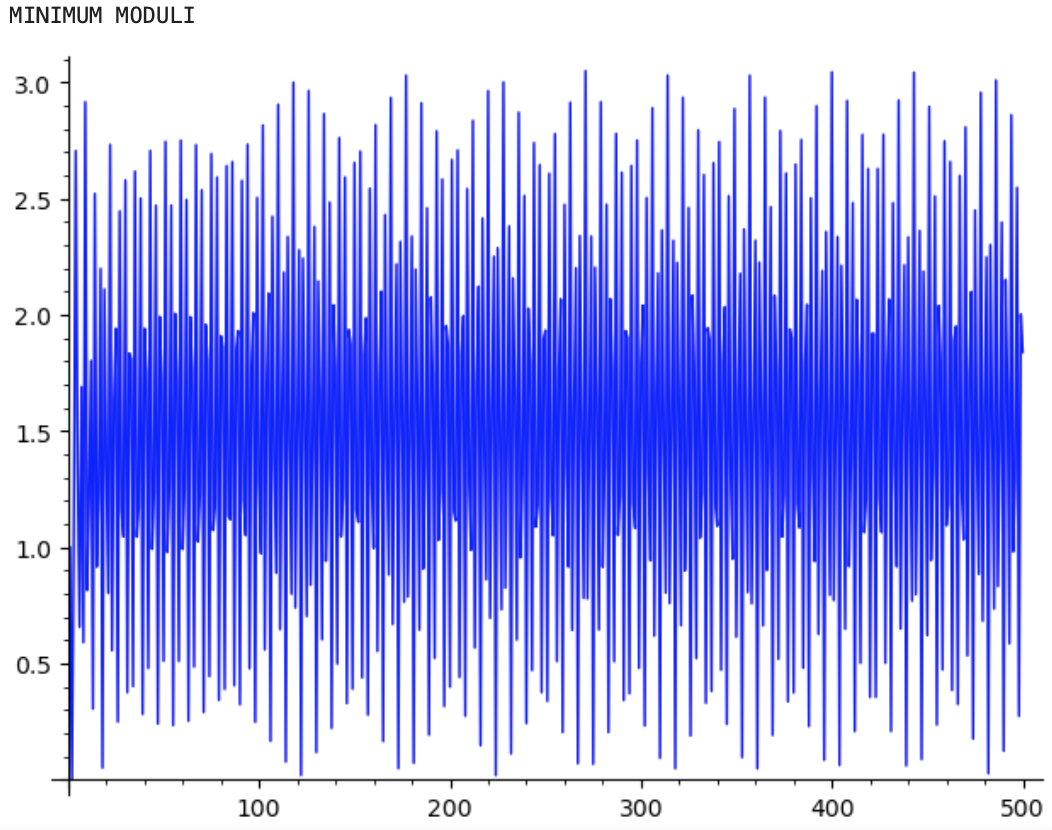}
        \caption{$c=1, h_n = a(p_n + 1)$.}
        \label{fig:curve11a1_primes_plus1}
    \end{subfigure}
    \hfill
    \caption{Cremona curve 11a1. }
    \label{fig:comparison_crv11a1}
\end{figure}
\begin{figure}[H]
    \centering
    \begin{subfigure}[t]{0.48\textwidth}
        \centering
        \includegraphics[width=\textwidth]{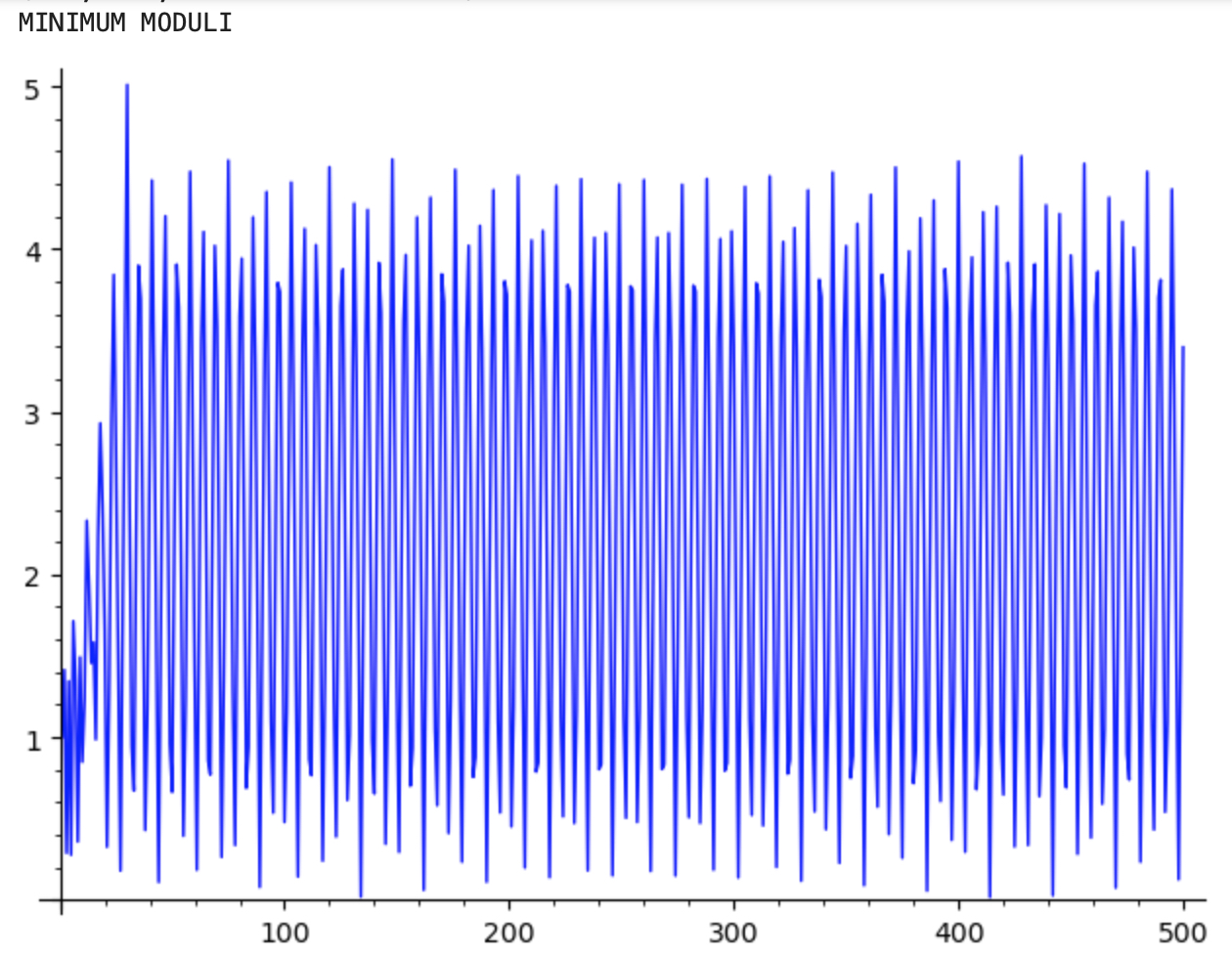}
        \caption{$h_n=a(n)$.}
        \label{fig:curve_14a1}
    \end{subfigure}
    \hfill
    \begin{subfigure}[t]{0.48\textwidth}
        \centering
        \includegraphics[width=\textwidth]{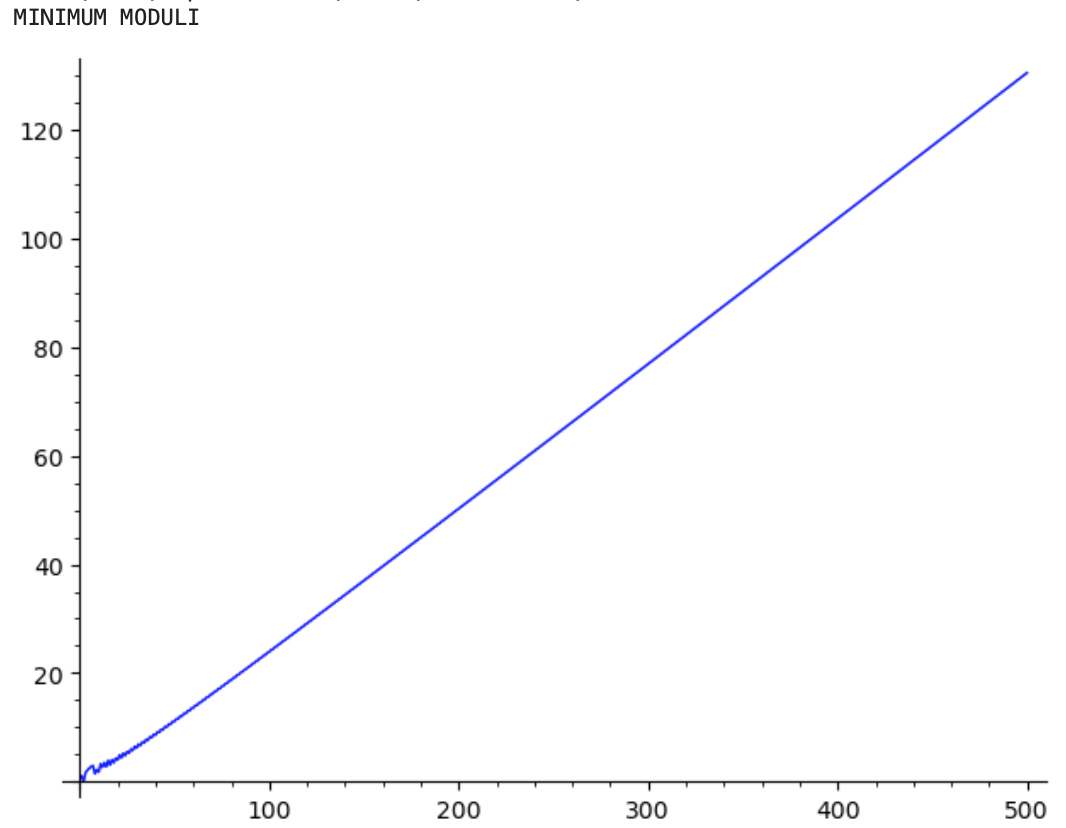}
        \caption{$h_n = a(p_n)$.}
        \label{fig:14a1_primes}
    \end{subfigure}
    \begin{subfigure}[t]{0.48\textwidth}
        \centering
        \includegraphics[width=\textwidth]{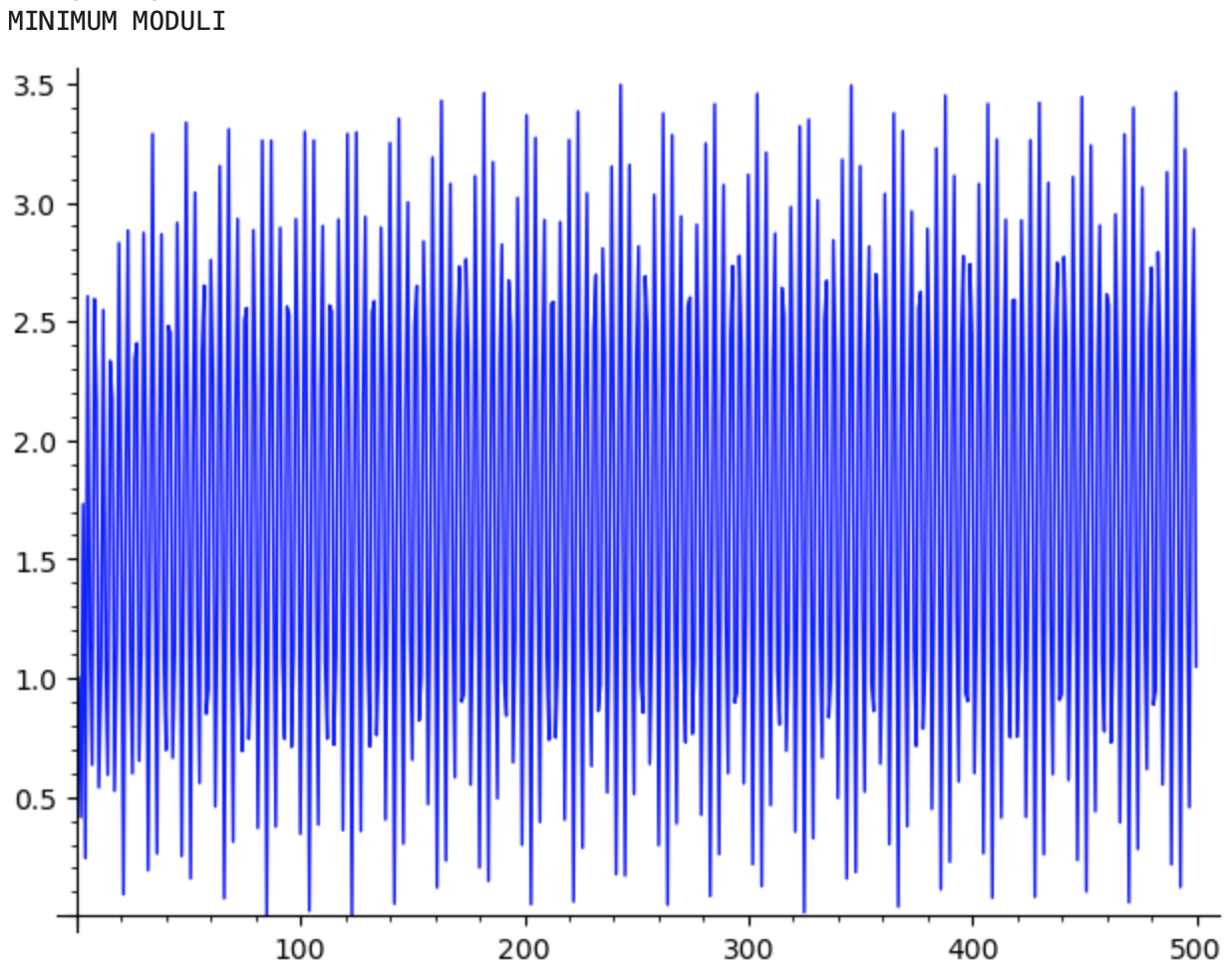}
        \caption{$h_n = a(p_n + 1)$.}
        \label{fig:curve14a1_primes_plus1}
    \end{subfigure}
    \caption{Cremona curve 14a1 with $c = 1$.}
    \label{fig:crv_14a1_comparison}
\end{figure}
\begin{figure}[H]
    \centering
    \begin{subfigure}[t]{0.48\textwidth}
        \centering
       \includegraphics[width=\textwidth]{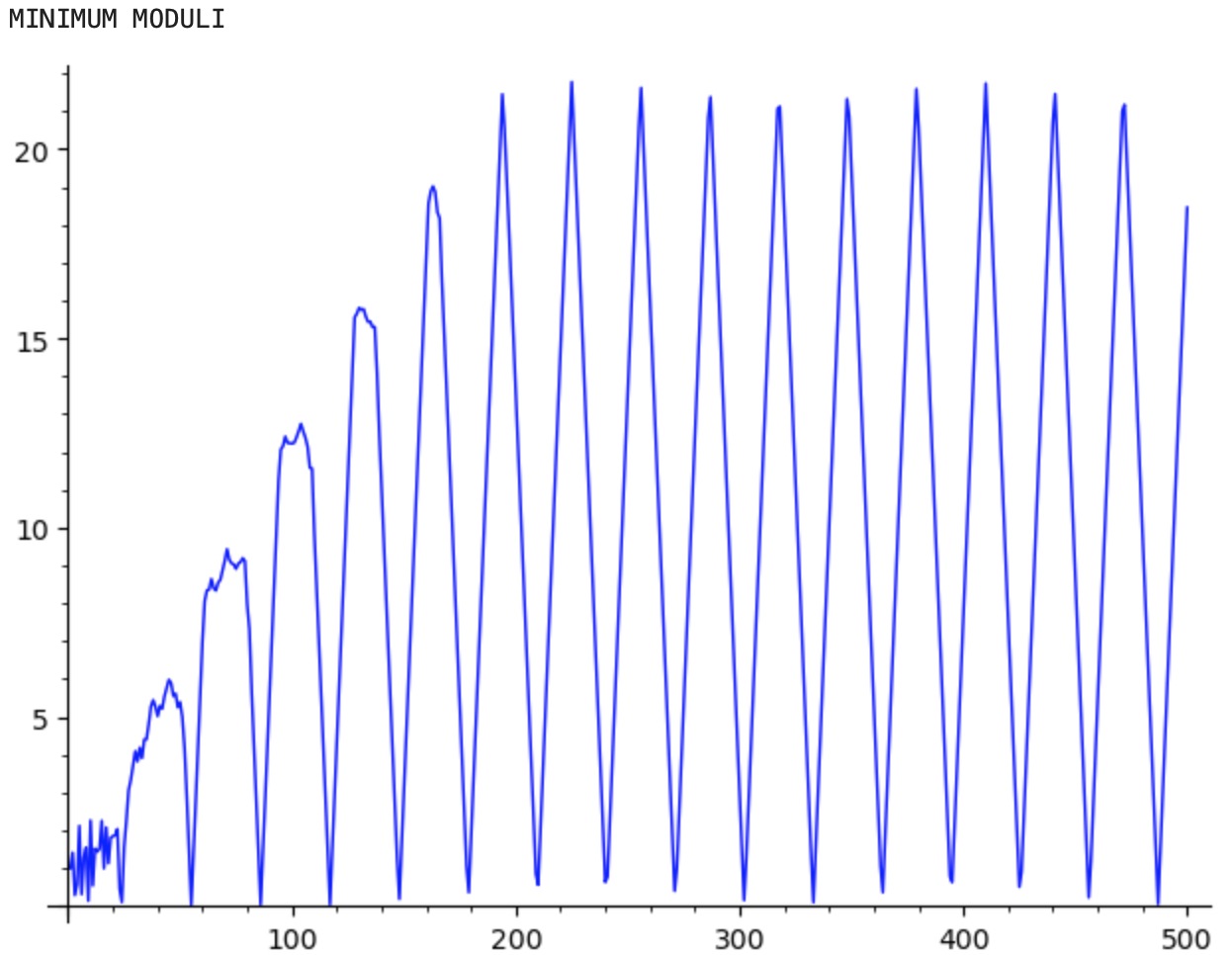}
        \caption{$h_n=a(n)$.}
        \label{fig:curve_15a_1_all}
    \end{subfigure}
    \hfill
    \begin{subfigure}[t]{0.48\textwidth}
        \centering
        \includegraphics[width=\textwidth]{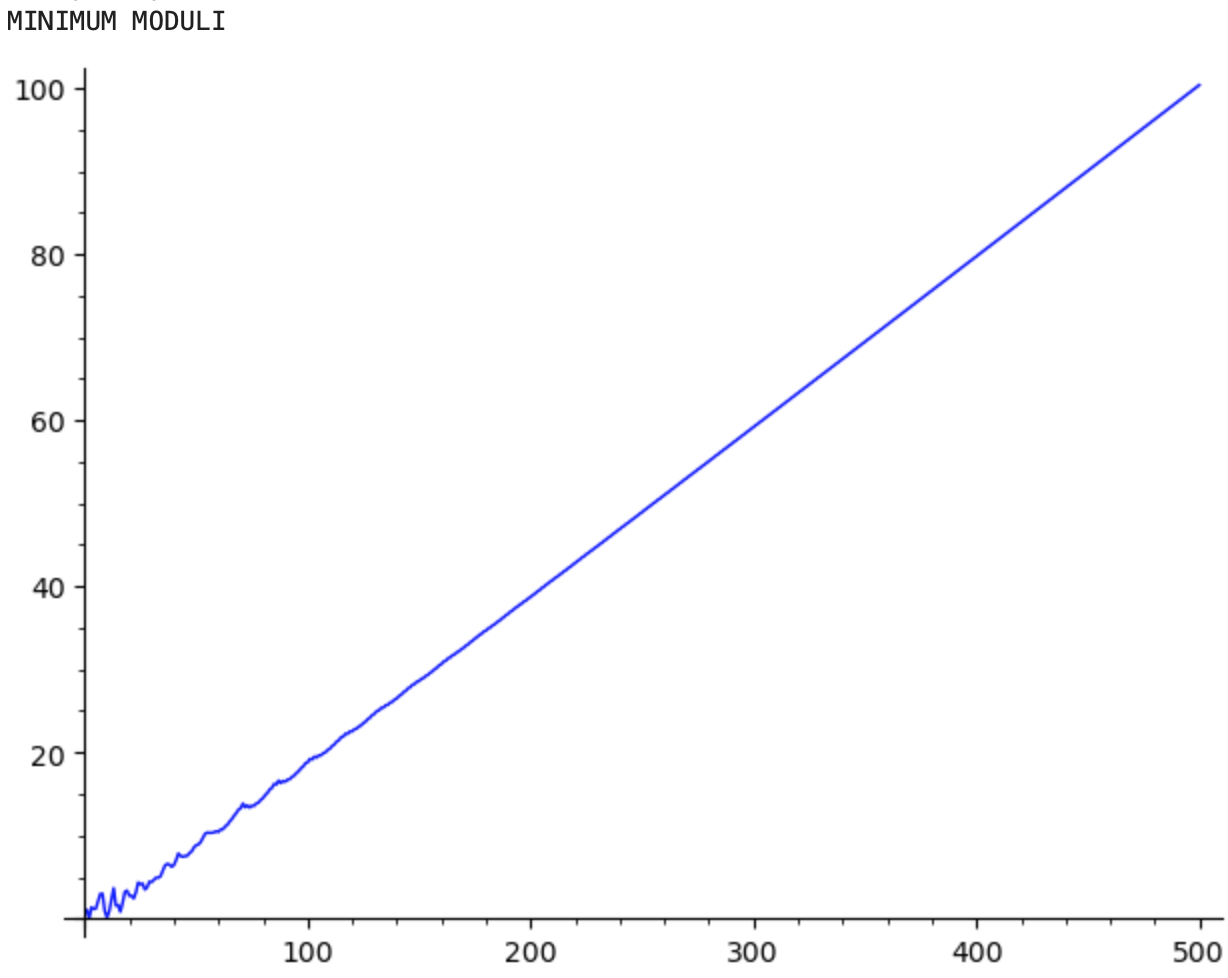}
        \caption{$h_n = a(p_n)$.}
        \label{fig:crv15a1_primes}
    \end{subfigure}
    \begin{subfigure}[t]{0.48\textwidth}
        \centering
        \includegraphics[width=\textwidth]{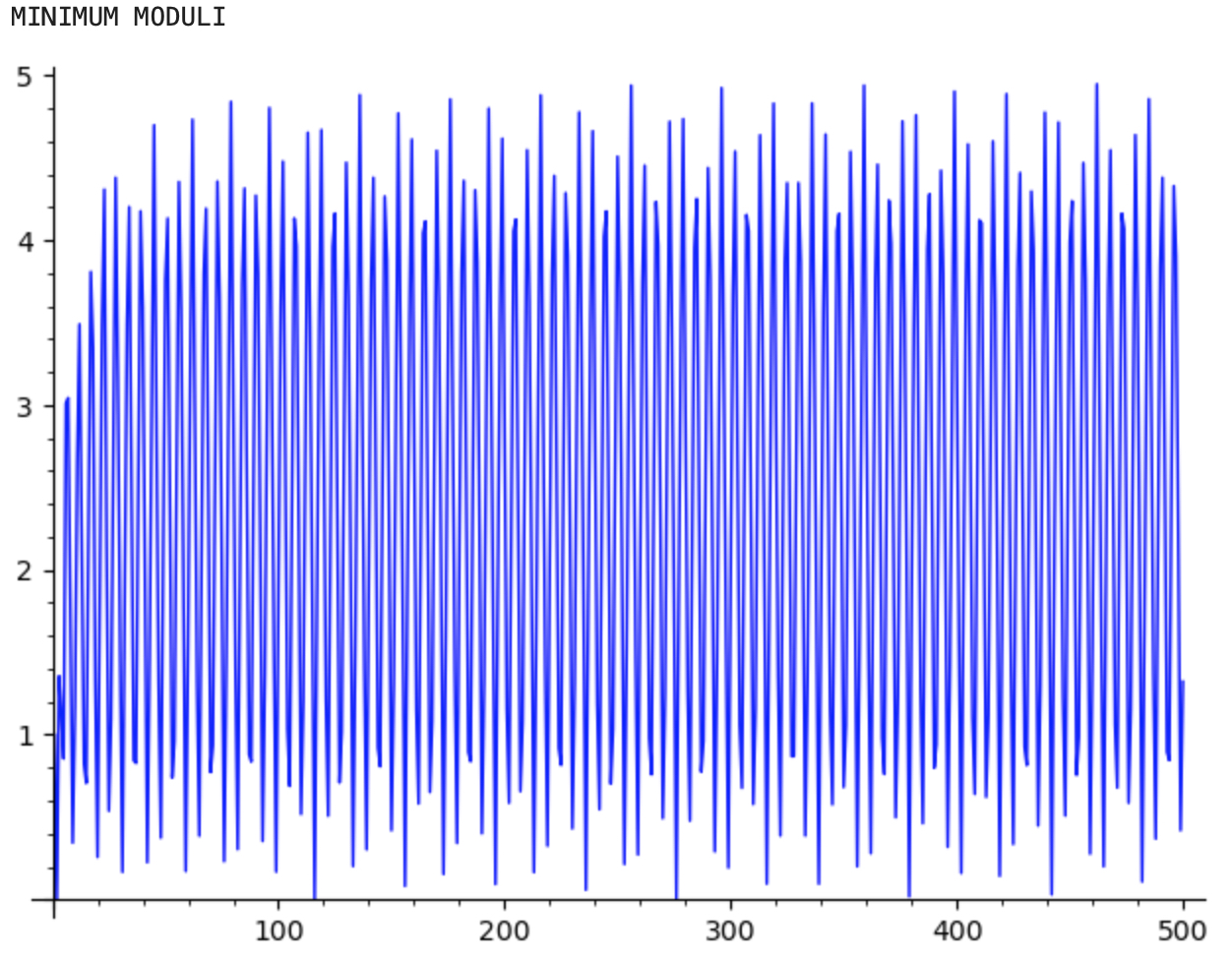}
        \caption{$h_n = a(p_n + 1)$.}
        \label{fig:crv_15a1_primes_plus_1}
    \end{subfigure}
    \caption{Cremona curve 15a1 with $c = 1$.}
    \label{fig:crv_15a1_comparison}
\end{figure}
\begin{figure}[H]
    \centering
    \begin{subfigure}[t]{0.48\textwidth}
        \centering
        \includegraphics[width=\textwidth]{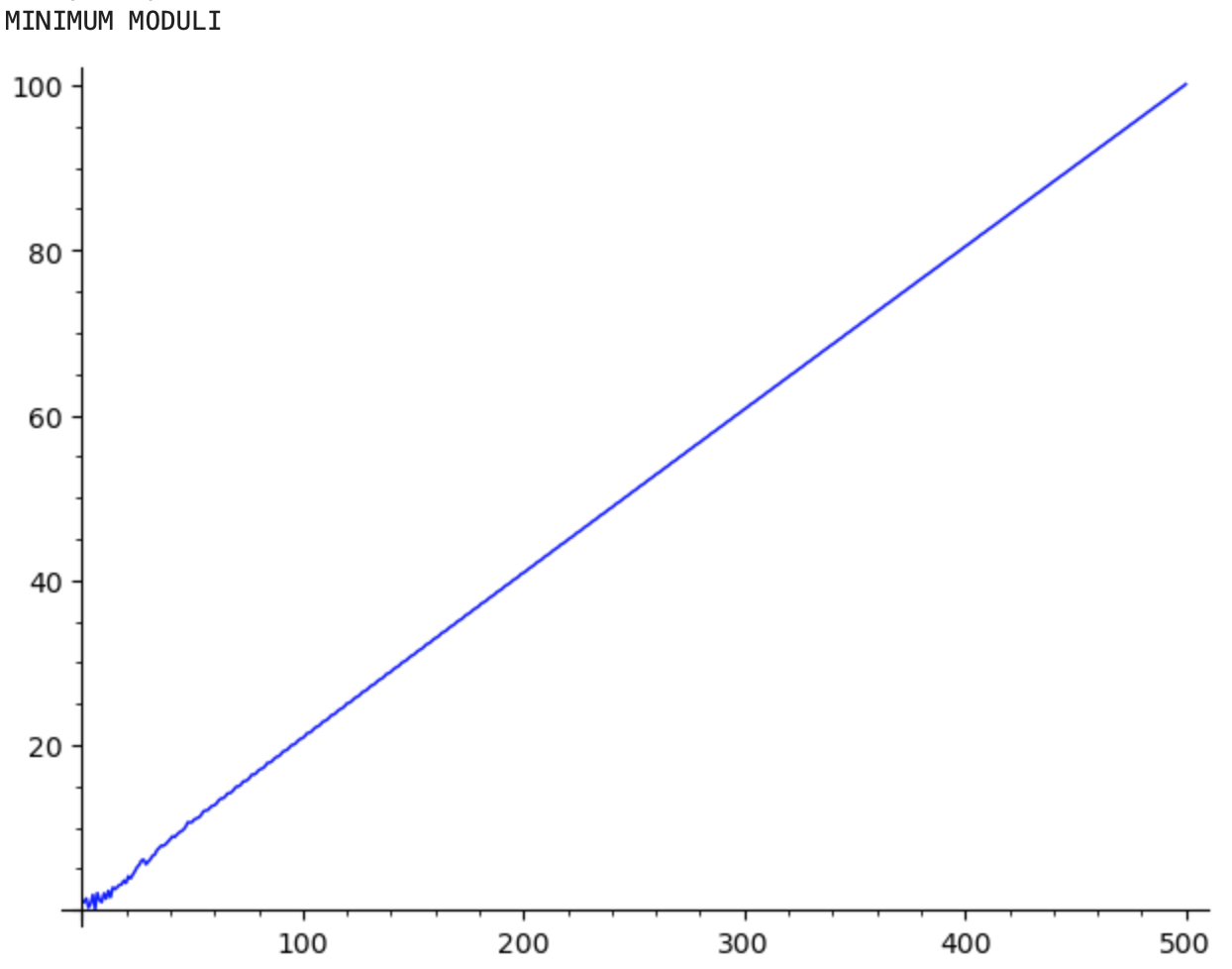}
        \caption{$h_n=a(n)$.}
        \label{fig:crv17a1_all_n}
    \end{subfigure}
    \hfill
    \begin{subfigure}[t]{0.48\textwidth}
        \centering
        \includegraphics[width=\textwidth]{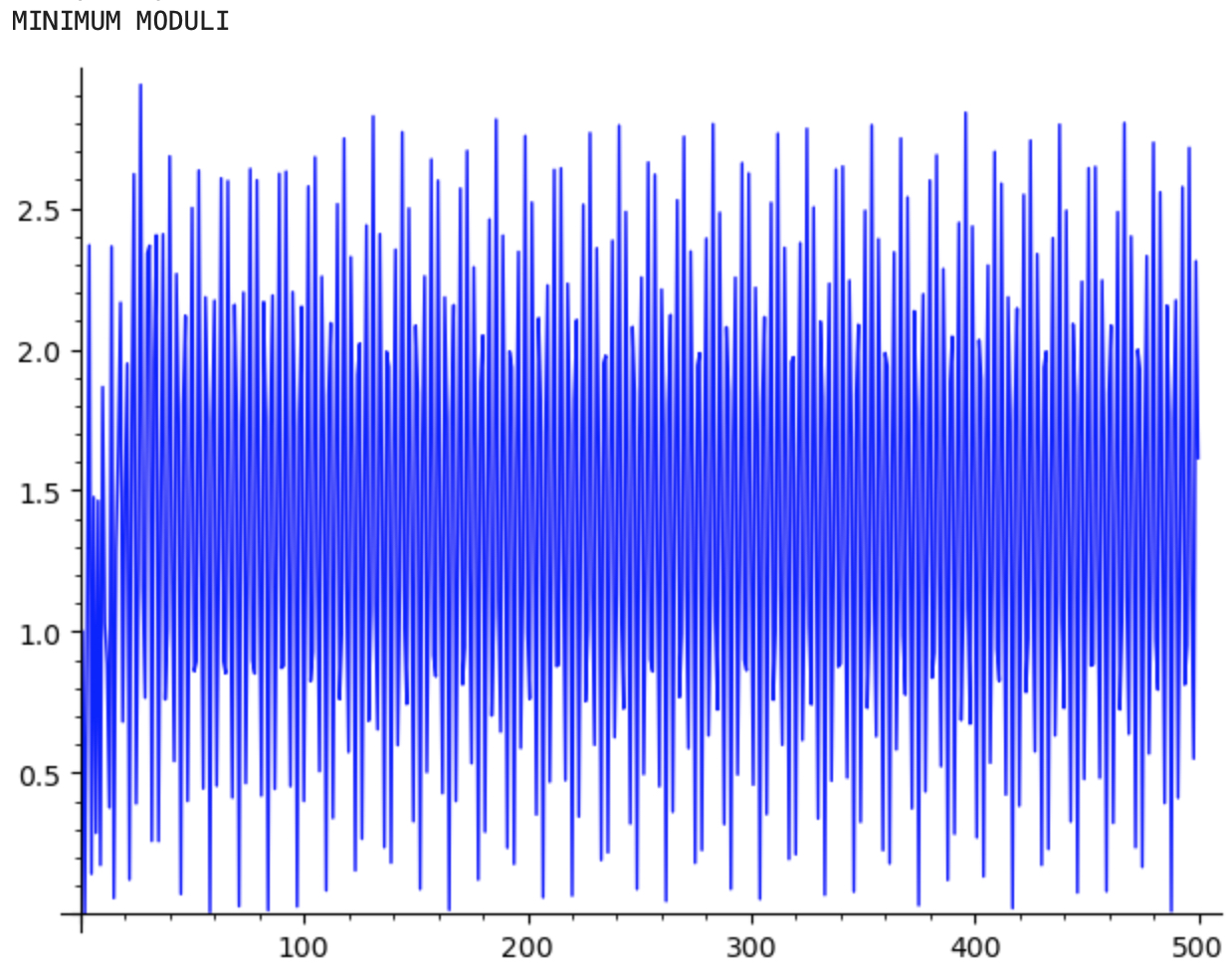}
        \caption{$h_n = a(p_n)$.}
        \label{fig:crv17a1_primes}
    \end{subfigure}
    \begin{subfigure}[t]{0.48\textwidth}
        \centering
        \includegraphics[width=\textwidth]{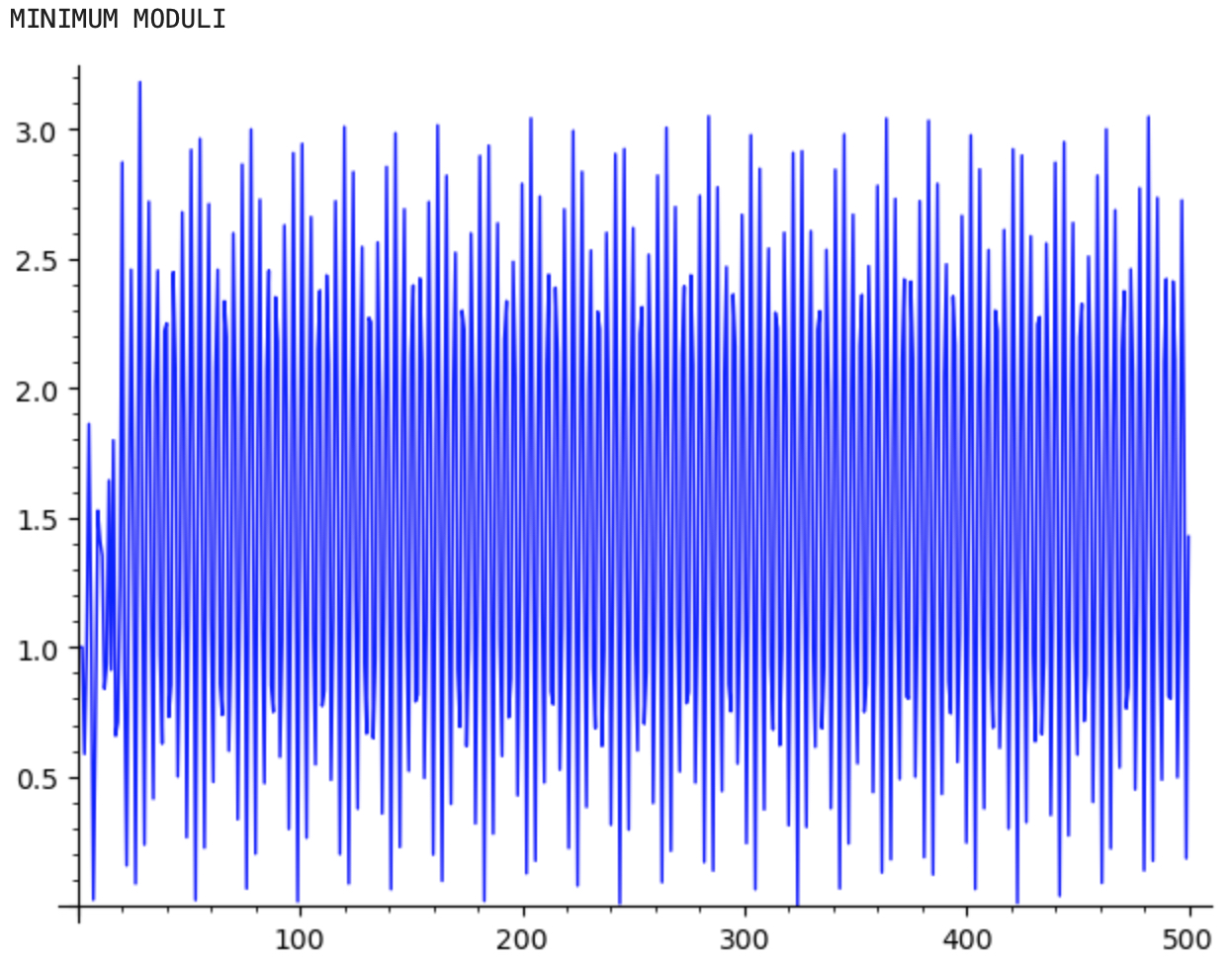}
        \caption{$h_n= a(p_n + 1)$.}
        \label{fig:curve_17a1_on_primes_plus_one}
    \end{subfigure}
    \caption{Cremona curve 17a1 with $c = 1$.}
    \label{fig:crv_17a1_comparison}
\end{figure}
\begin{figure}[H]
    \centering
    \begin{subfigure}[t]{0.48\textwidth}
        \centering
        \includegraphics[width=\textwidth]{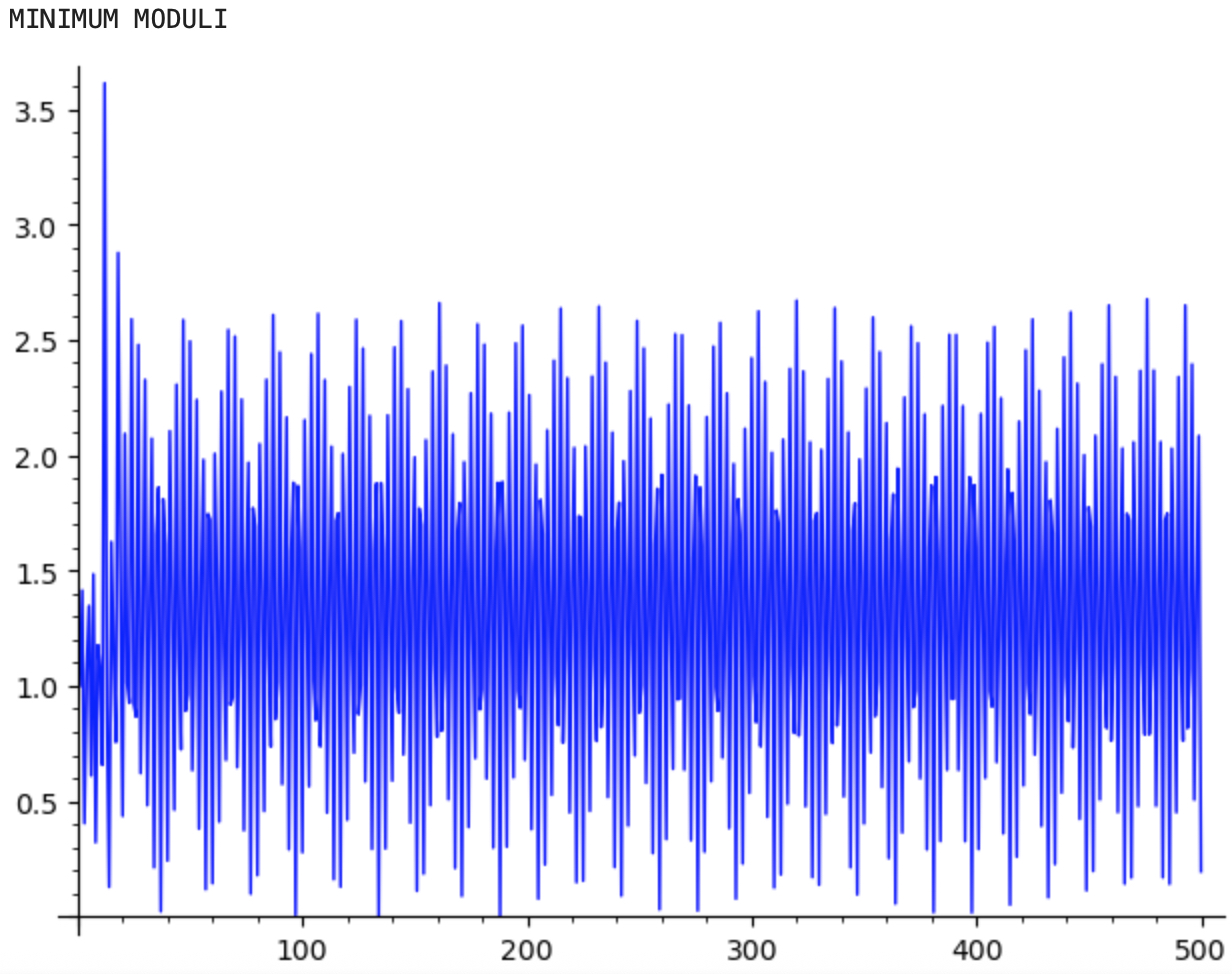}
        \caption{$h_n=a(n)$.}
        \label{fig:curve_19a1}
    \end{subfigure}
    \hfill
    \begin{subfigure}[t]{0.48\textwidth}
        \centering
        \includegraphics[width=\textwidth]{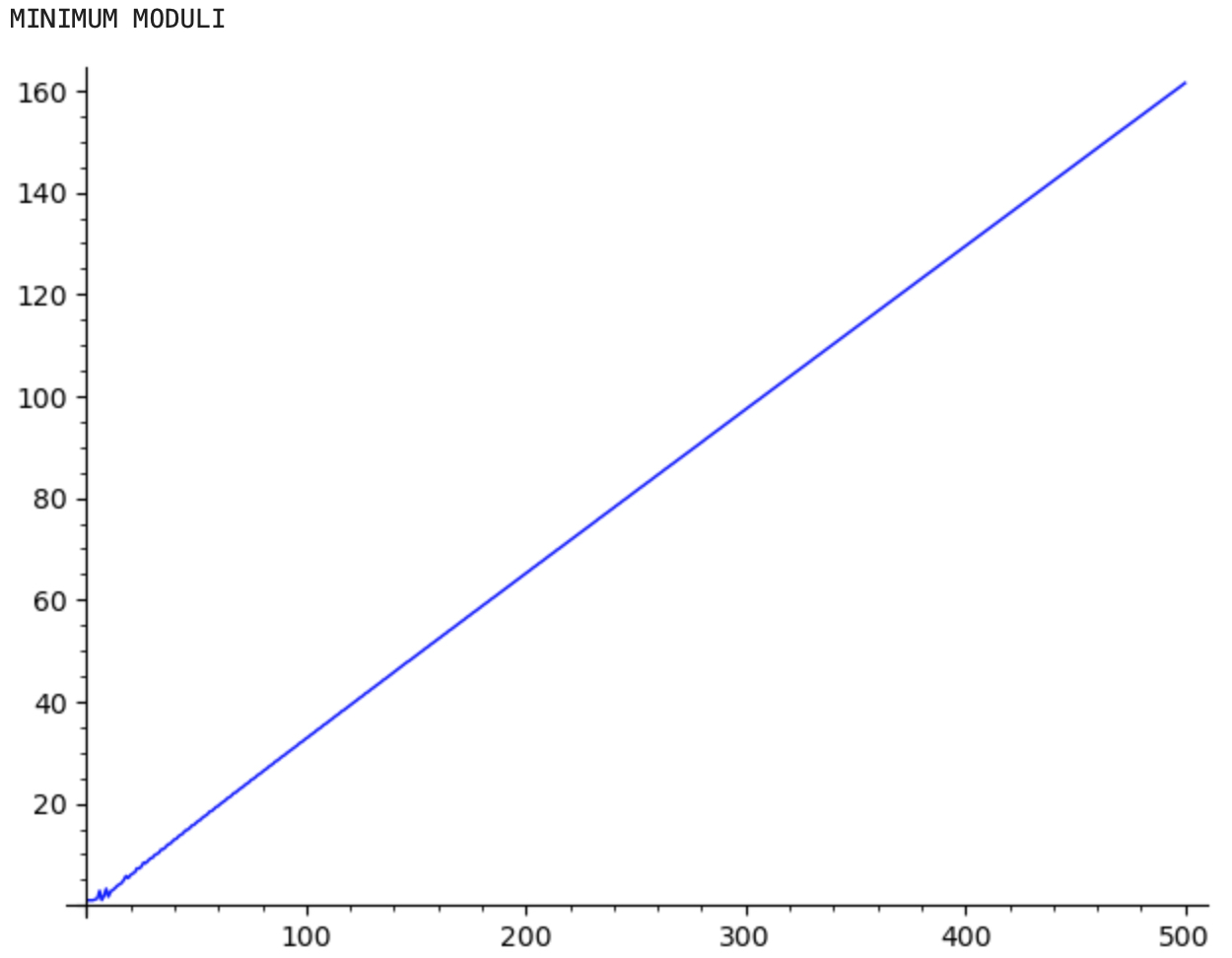}
        \caption{$h_n = a(p_n)$.}
        \label{fig:crv19a1_primes}
    \end{subfigure}
    \begin{subfigure}[t]{0.48\textwidth}
        \centering
        \includegraphics[width=\textwidth]{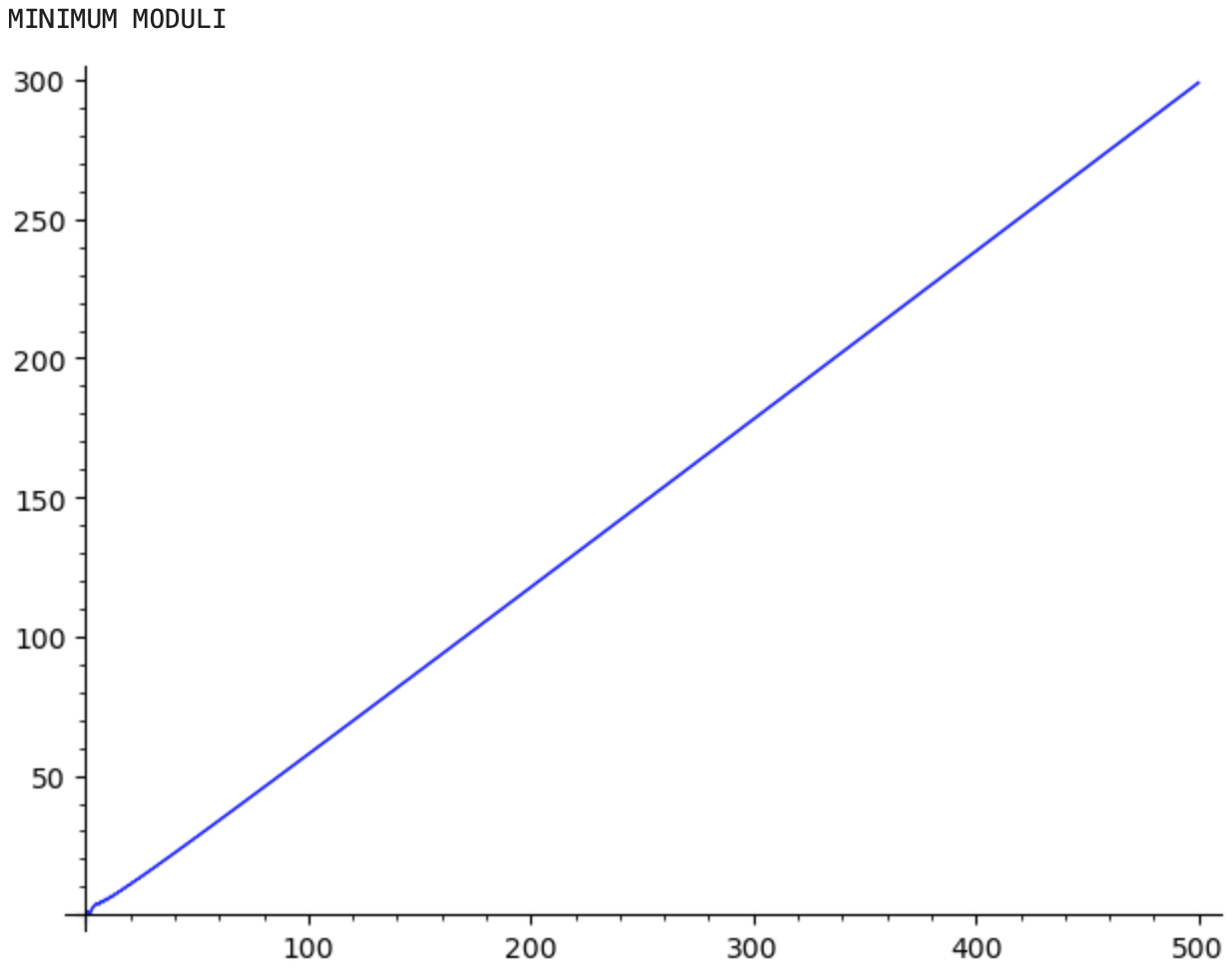}
        \caption{$h_n = a(p_n +1)$.}
        \label{fig:crv19a1_primes_plus1}
    \end{subfigure}
    \caption{Cremona curve 19a1 with $c = 1$.}
    \label{fig:crv_19a1_comparison}
\end{figure}
\begin{figure}[H]
    \centering
    \begin{subfigure}[t]{0.48\textwidth}
        \centering
        \includegraphics[width=\textwidth]{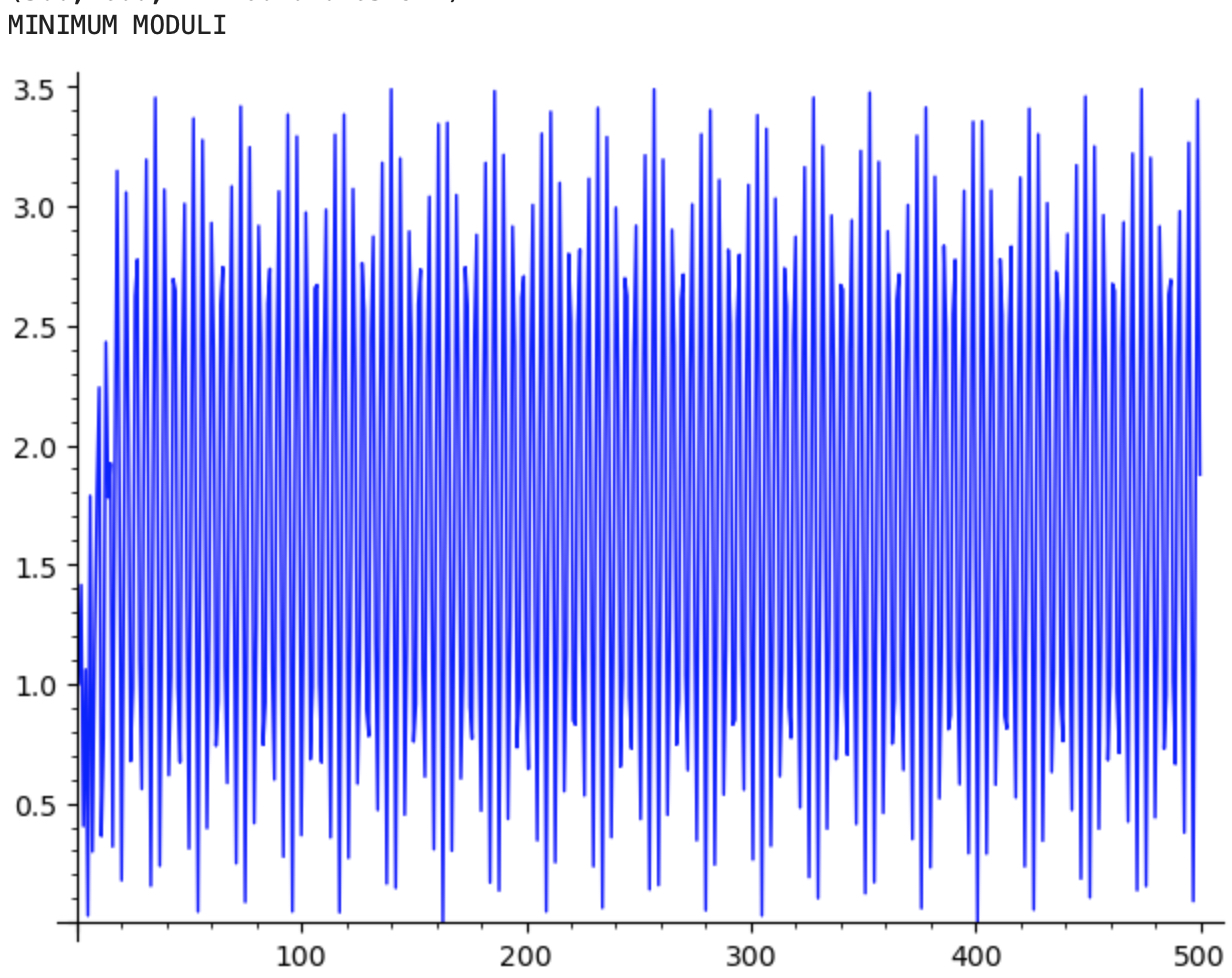}
        \caption{$h_n=a(n)$.}
        \label{fig:crv_20a1_all}
    \end{subfigure}
    \hfill
    \begin{subfigure}[t]{0.48\textwidth}
        \centering
        \includegraphics[width=\textwidth]{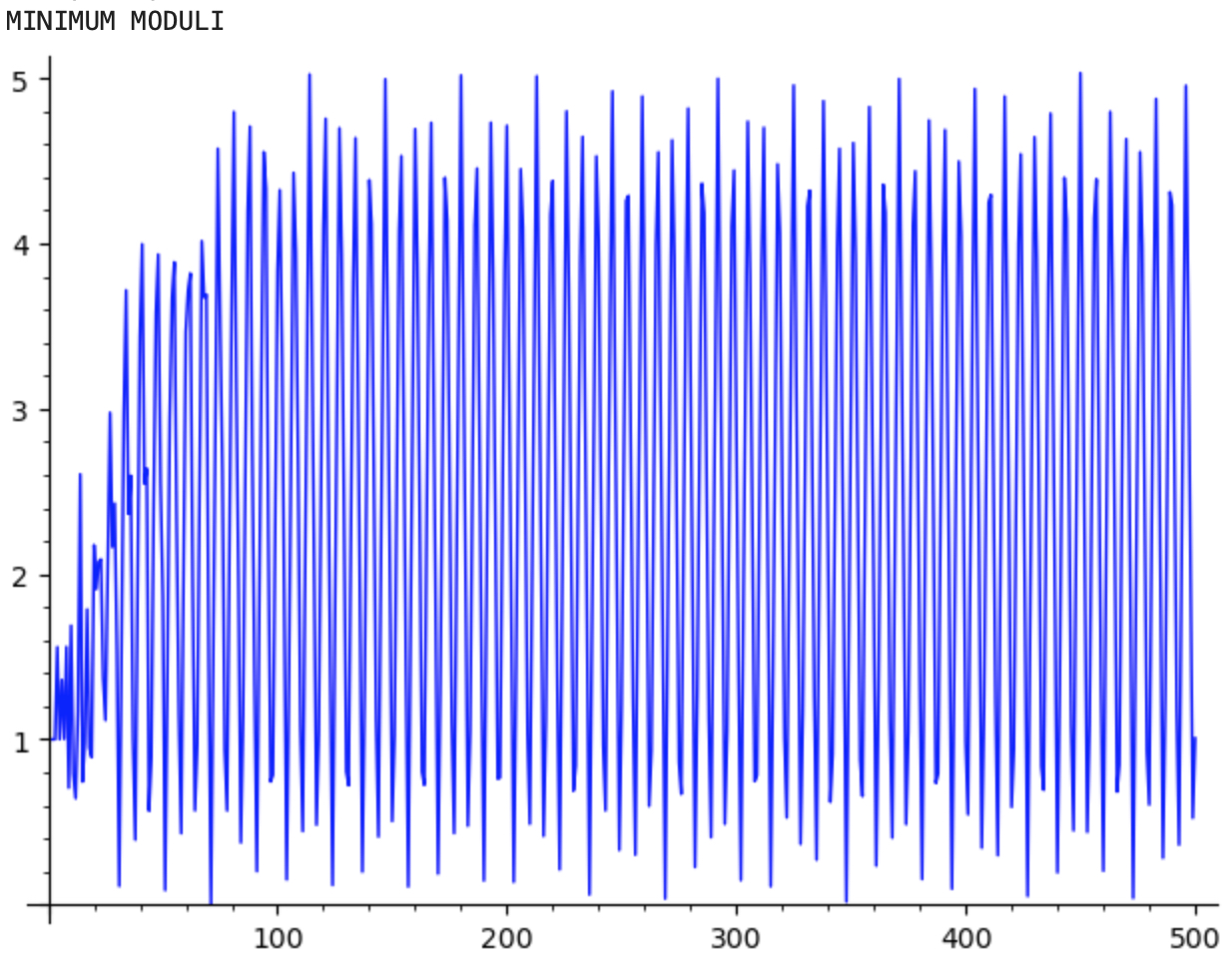}
        \caption{$h_n = a(p_n)$.}
        \label{fig:crv20a1_primes}
    \end{subfigure}
    \begin{subfigure}[t]{0.48\textwidth}
        \centering
        \includegraphics[width=\textwidth]{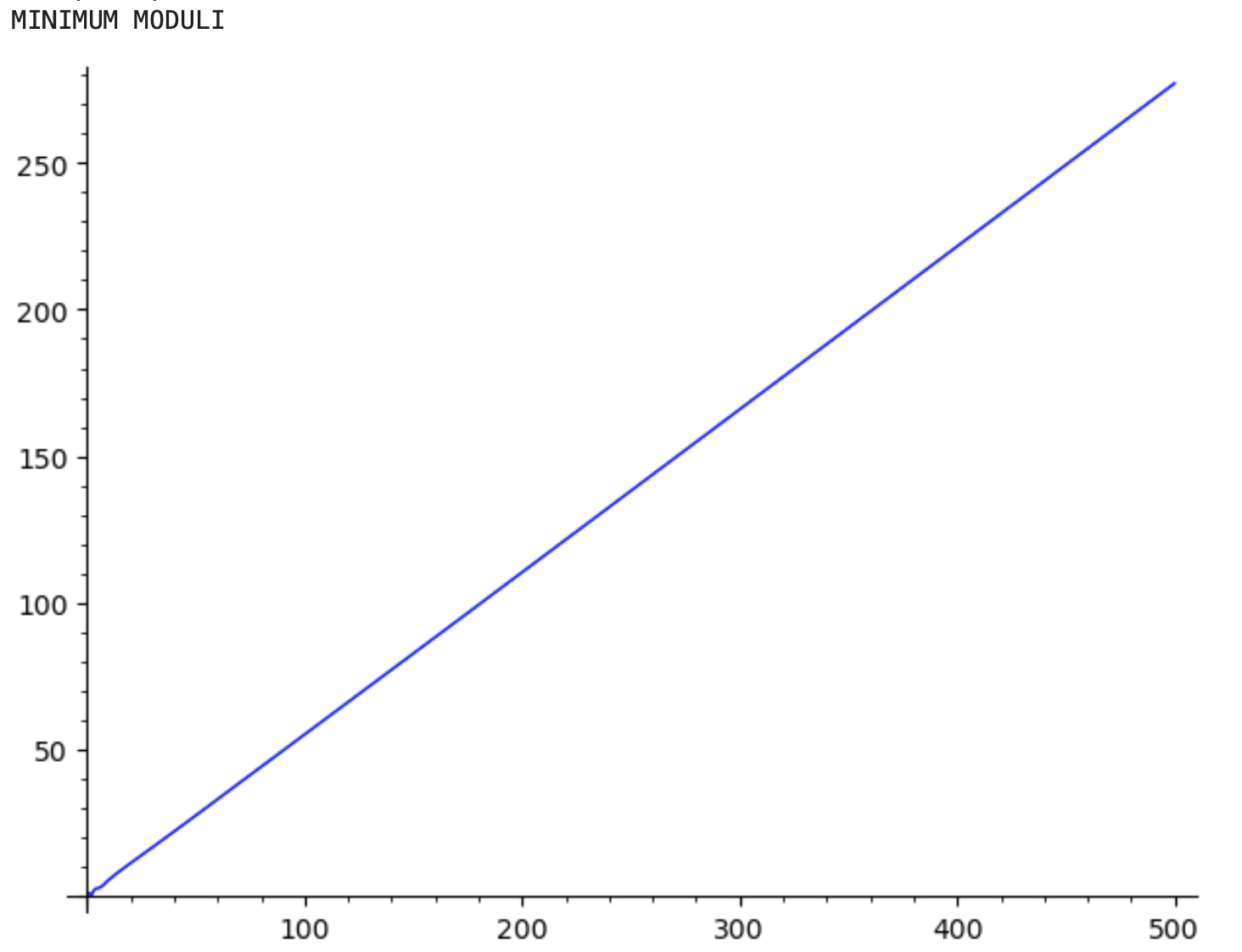}
        \caption{$h_n = a(p_n + 1)$.}
        \label{fig:crv_20a1_primes_plus1}
    \end{subfigure}
    \caption{Cremona curve 20a1 with $c = 1$.}
    \label{fig:crv_20a1_comparison}
\end{figure}
\begin{figure}[H]
    \centering
    \begin{subfigure}[t]{0.48\textwidth}
        \centering
        \includegraphics[width=\textwidth]{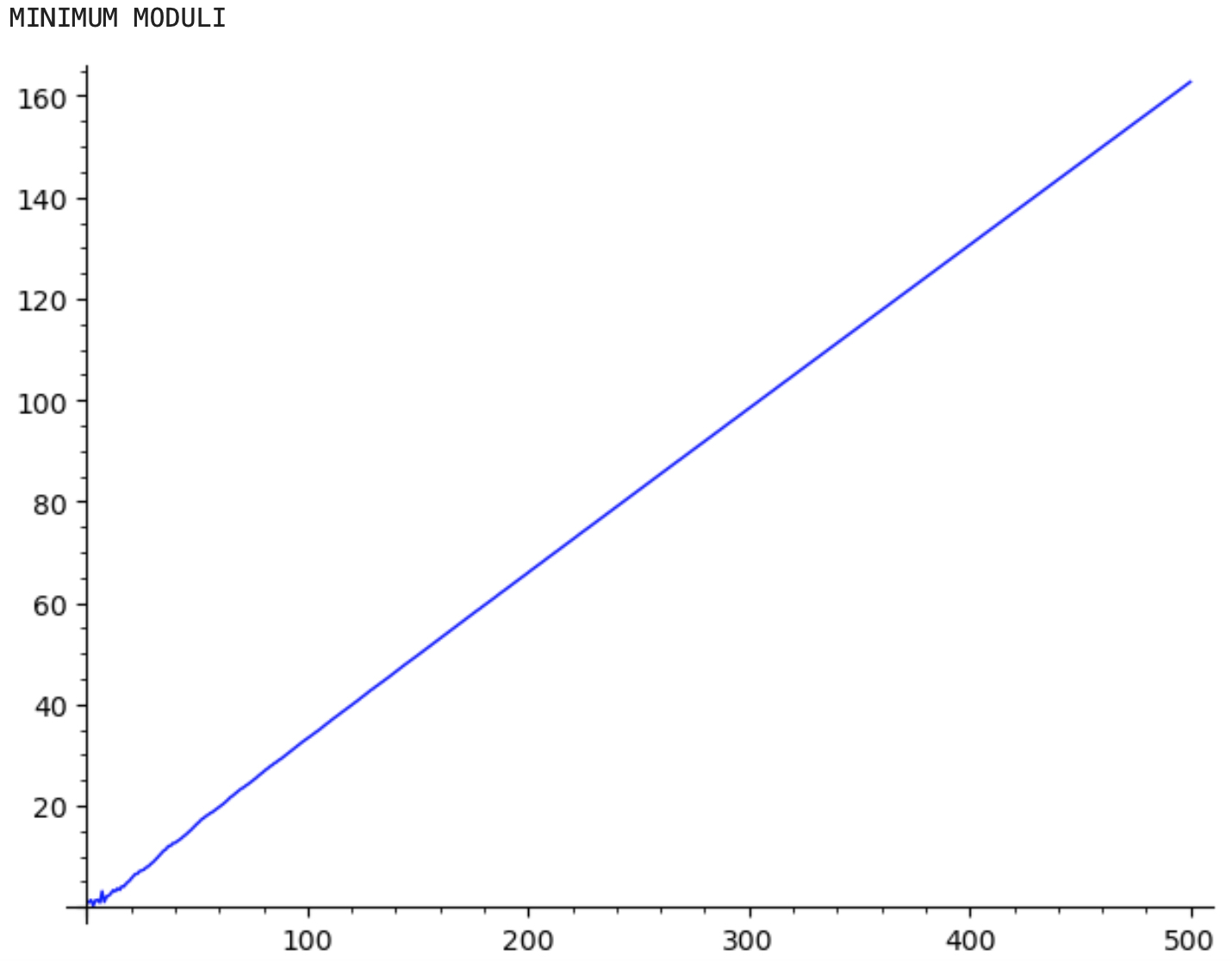}
        \caption{$h_n=a(n)$.}
        \label{fig:crv21a1_all}
    \end{subfigure}
    \hfill
    \begin{subfigure}[t]{0.48\textwidth}
        \centering
        \includegraphics[width=\textwidth]{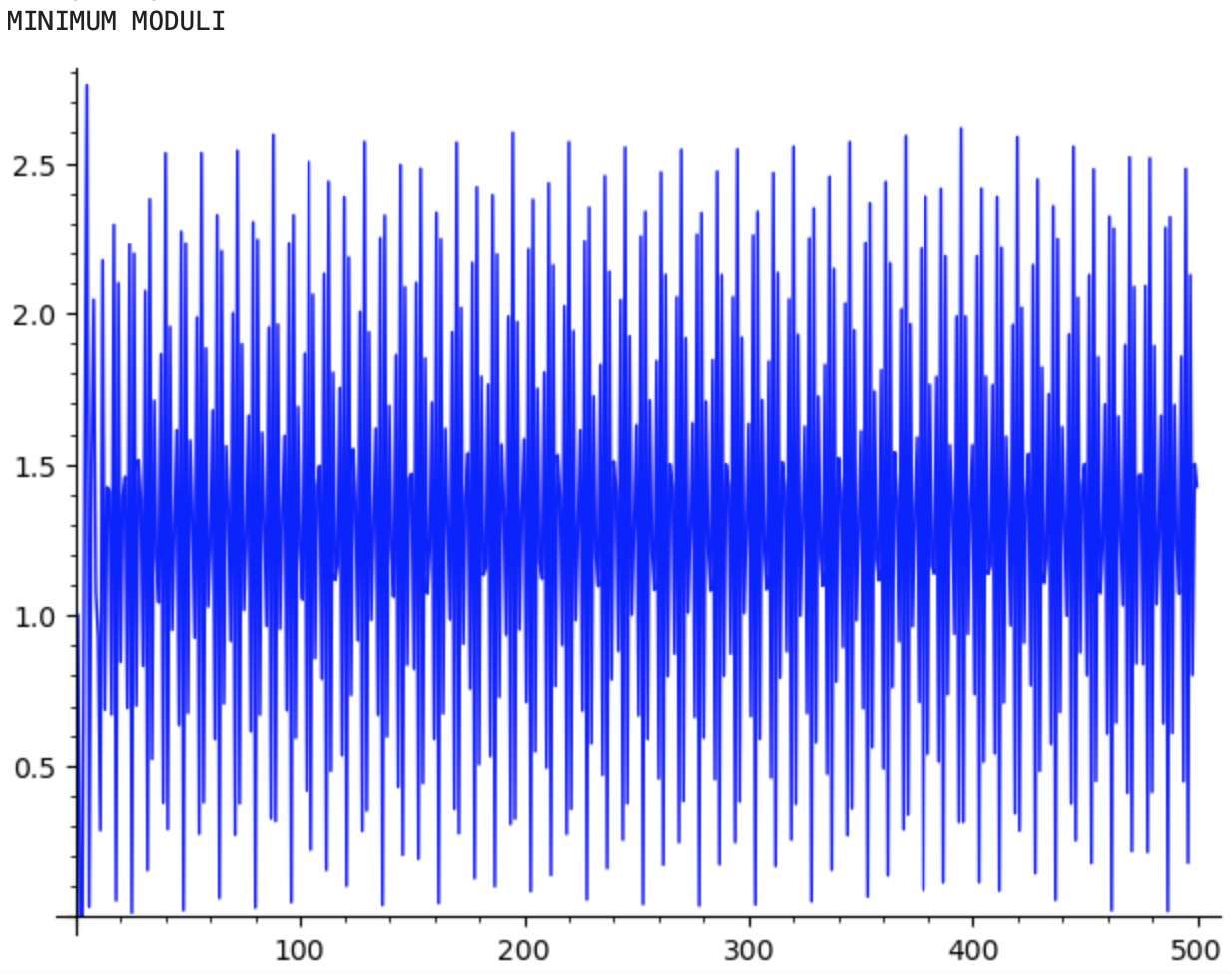}
        \caption{$h_n = a(p_n)$.}
        \label{fig:crv21a1_primes}
    \end{subfigure}
    \begin{subfigure}[t]{0.48\textwidth}
        \centering
        \includegraphics[width=\textwidth]{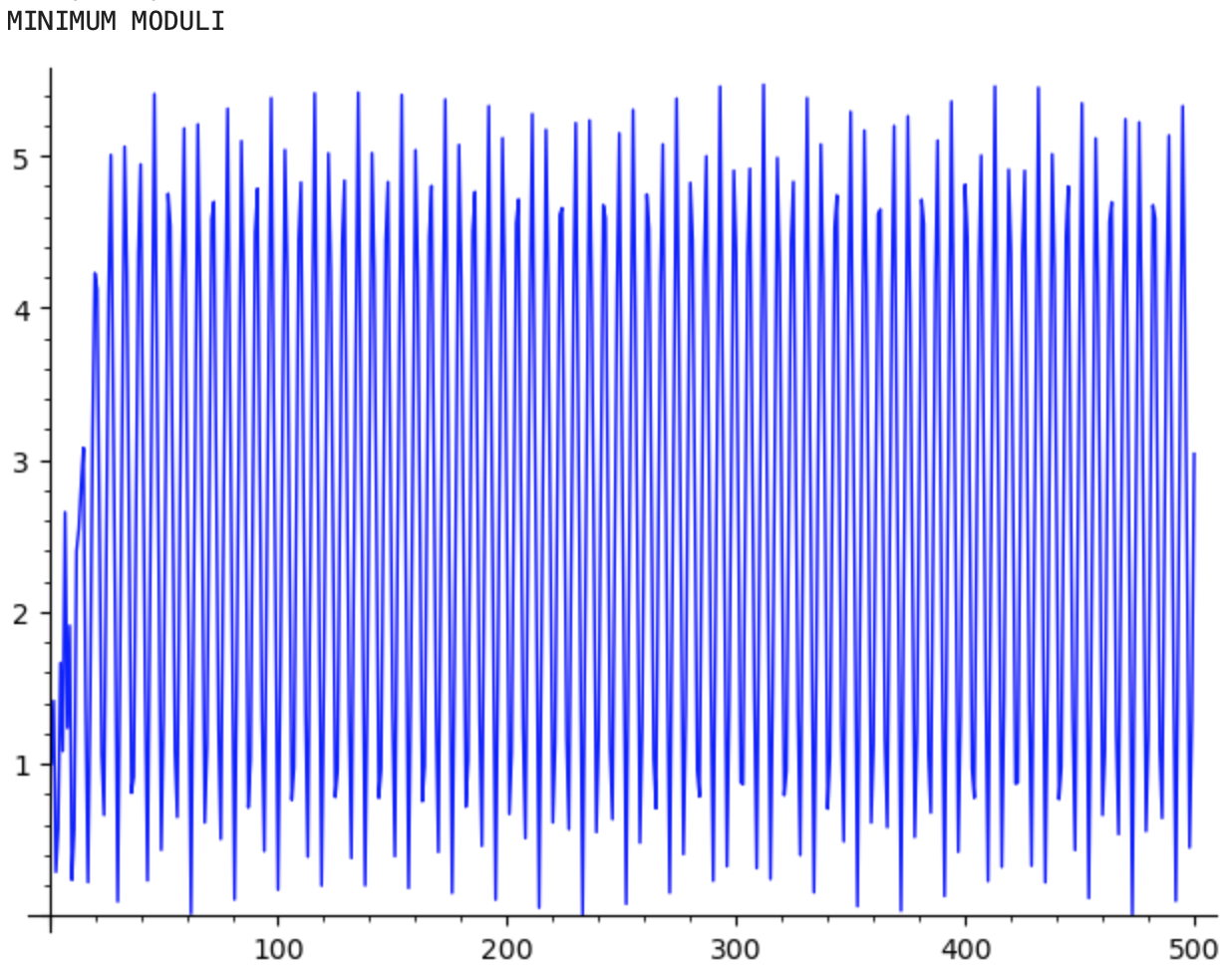}
        \caption{$h_n = a(p_n + 1)$.}
        \label{fig:crv21a1_primes_plus1}
    \end{subfigure}
    \caption{Cremona curve 21a1 with $c = 1$.}
    \label{fig:crv_21a1_primes}
\end{figure}
\begin{figure}[H]
    \centering
    \begin{subfigure}[t]{0.48\textwidth}
        \centering
        \includegraphics[width=\textwidth]{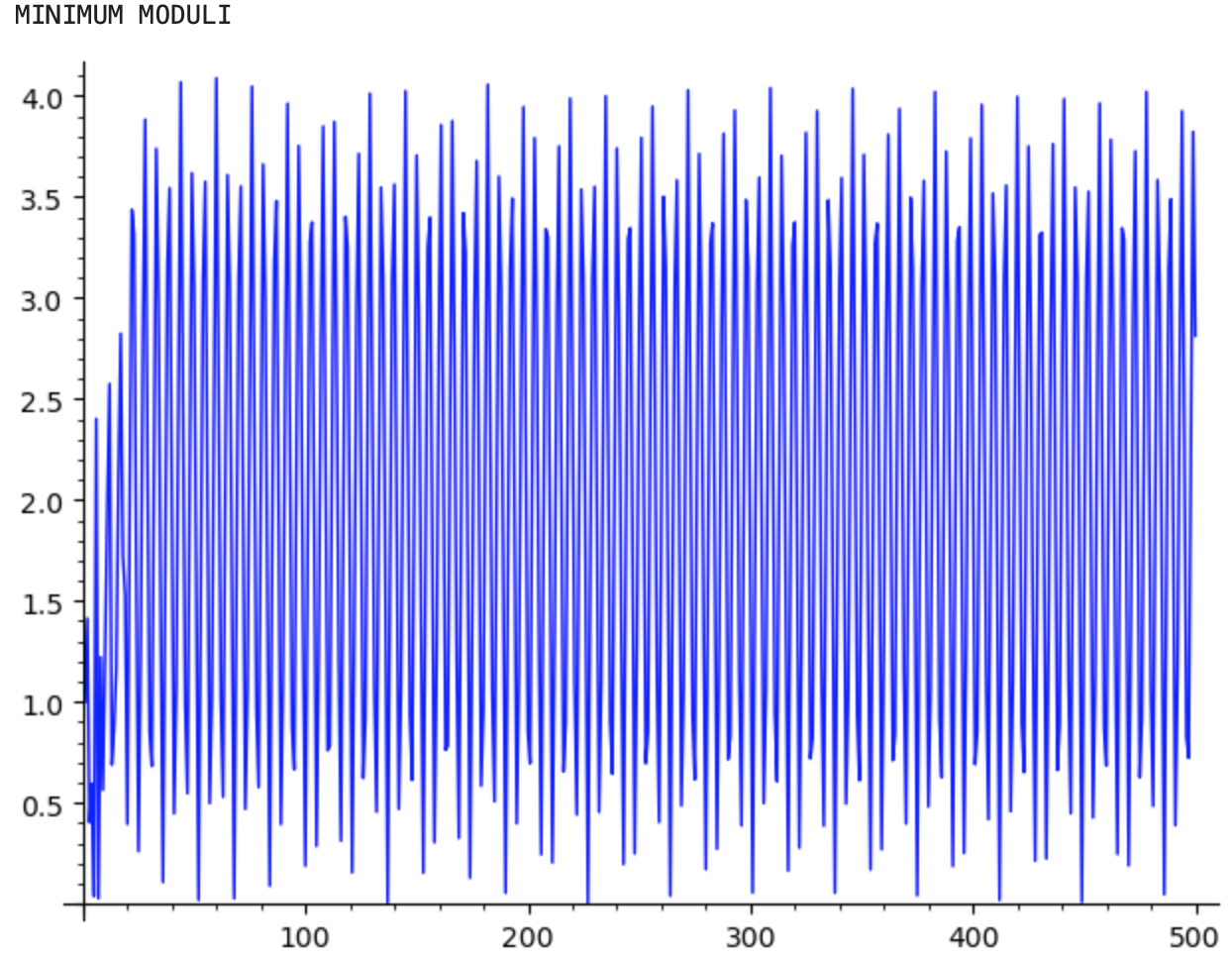}
        \caption{$h_n=a(n)$.}
        \label{fig:crv24a1_all_n}
    \end{subfigure}
    \hfill
    \begin{subfigure}[t]{0.48\textwidth}
        \centering
        \includegraphics[width=\textwidth]{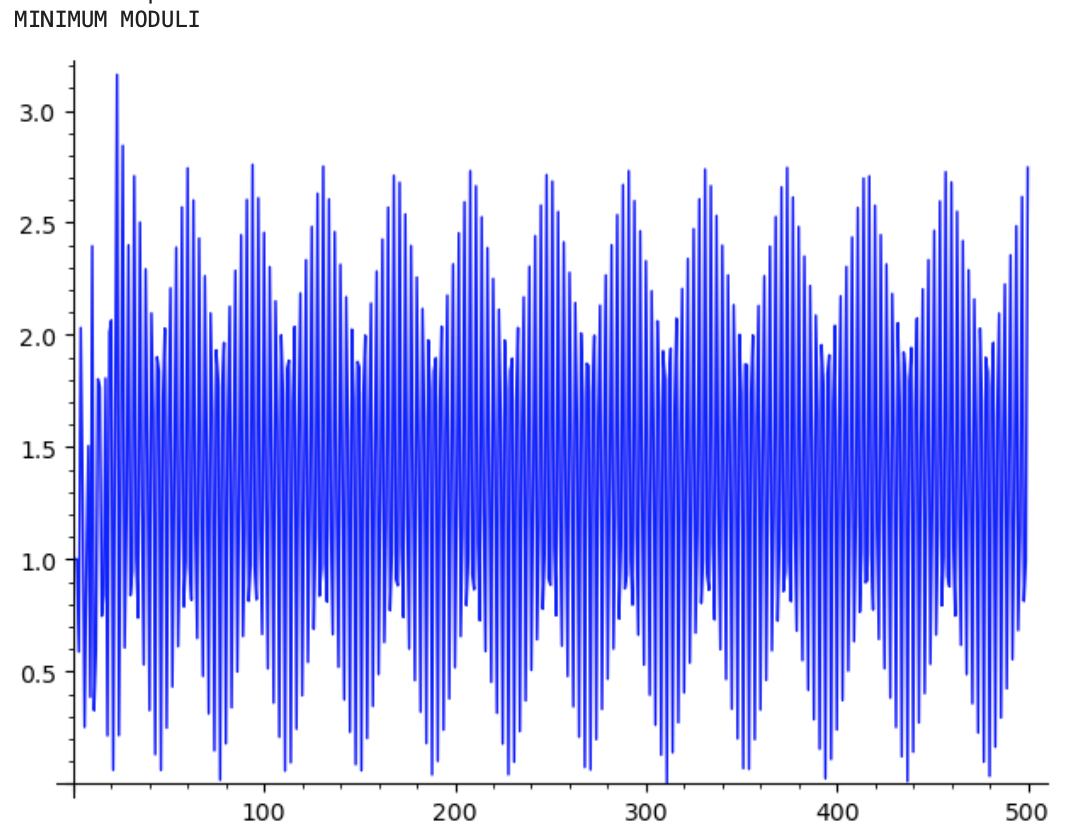}
        \caption{$h_n = a(p_n)$.}
        \label{fig:crv24a1_primes}
    \end{subfigure}
     \begin{subfigure}[t]{0.48\textwidth}
        \centering
        \includegraphics[width=\textwidth]{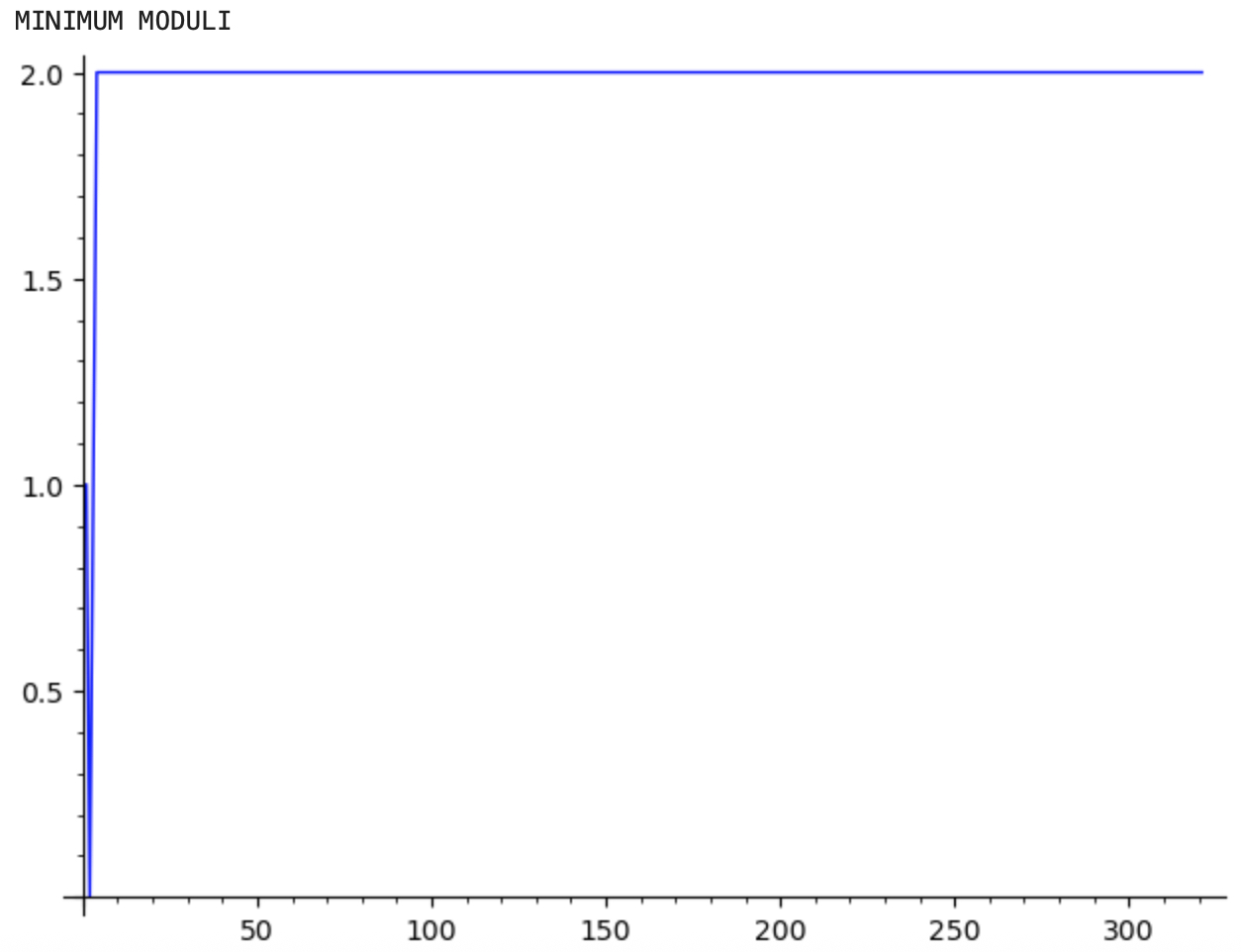}
        \caption{$h_n = a(p_n + 1)$.}
        \label{fig:crv24a1_primes_plus1}
    \end{subfigure}
    \caption{Cremona curve 24a1 with $c = 1$.}
    \label{fig:crv_24a1_comparison}
\end{figure}
\begin{figure}[H]
    \centering
    \begin{subfigure}[t]{0.48\textwidth}
        \centering
        \includegraphics[width=\textwidth]{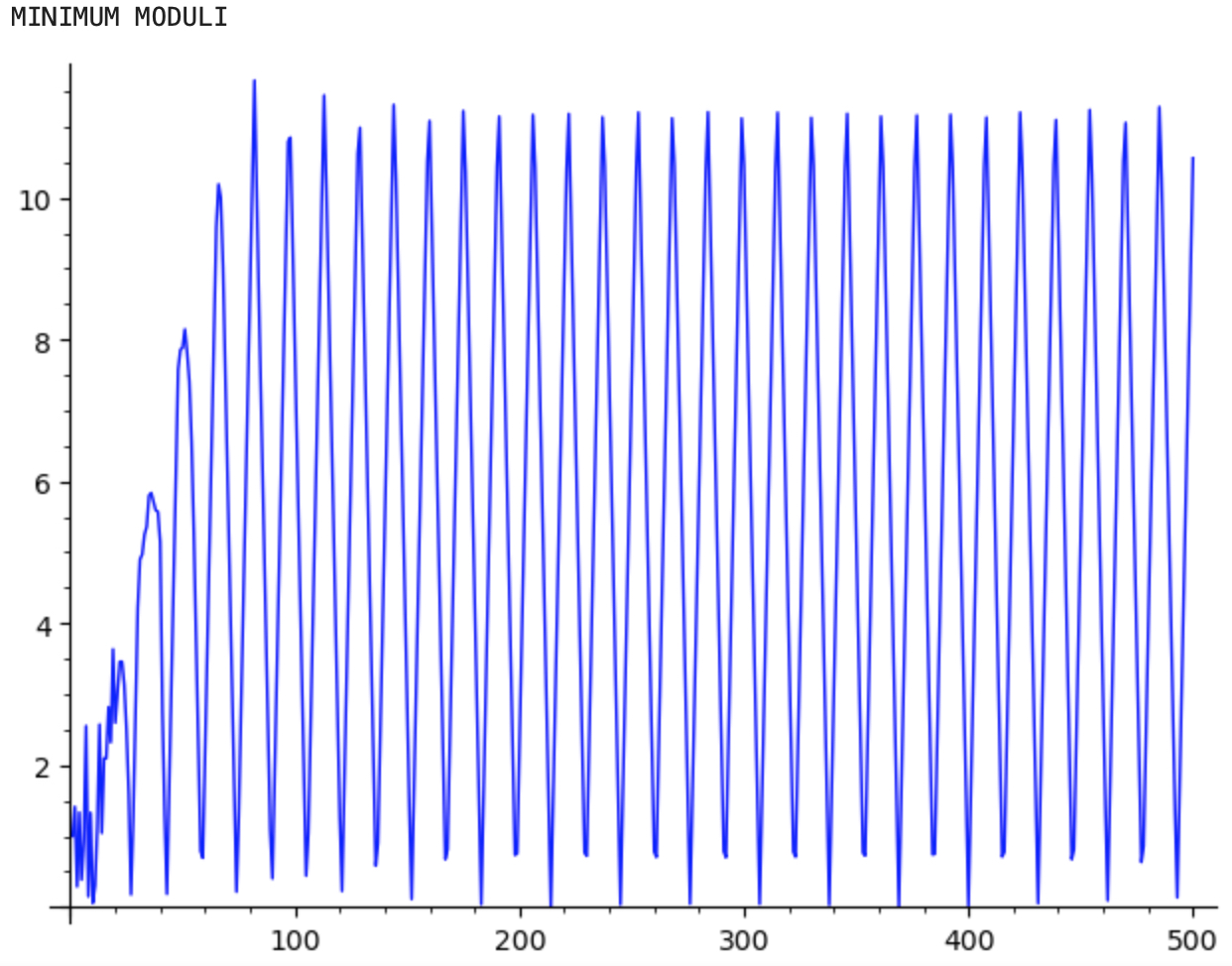}
        \caption{$h_n=a(n)$.}
        \label{fig:crv26a1_all}
    \end{subfigure}
    \hfill
    \begin{subfigure}[t]{0.48\textwidth}
        \centering
        \includegraphics[width=\textwidth]{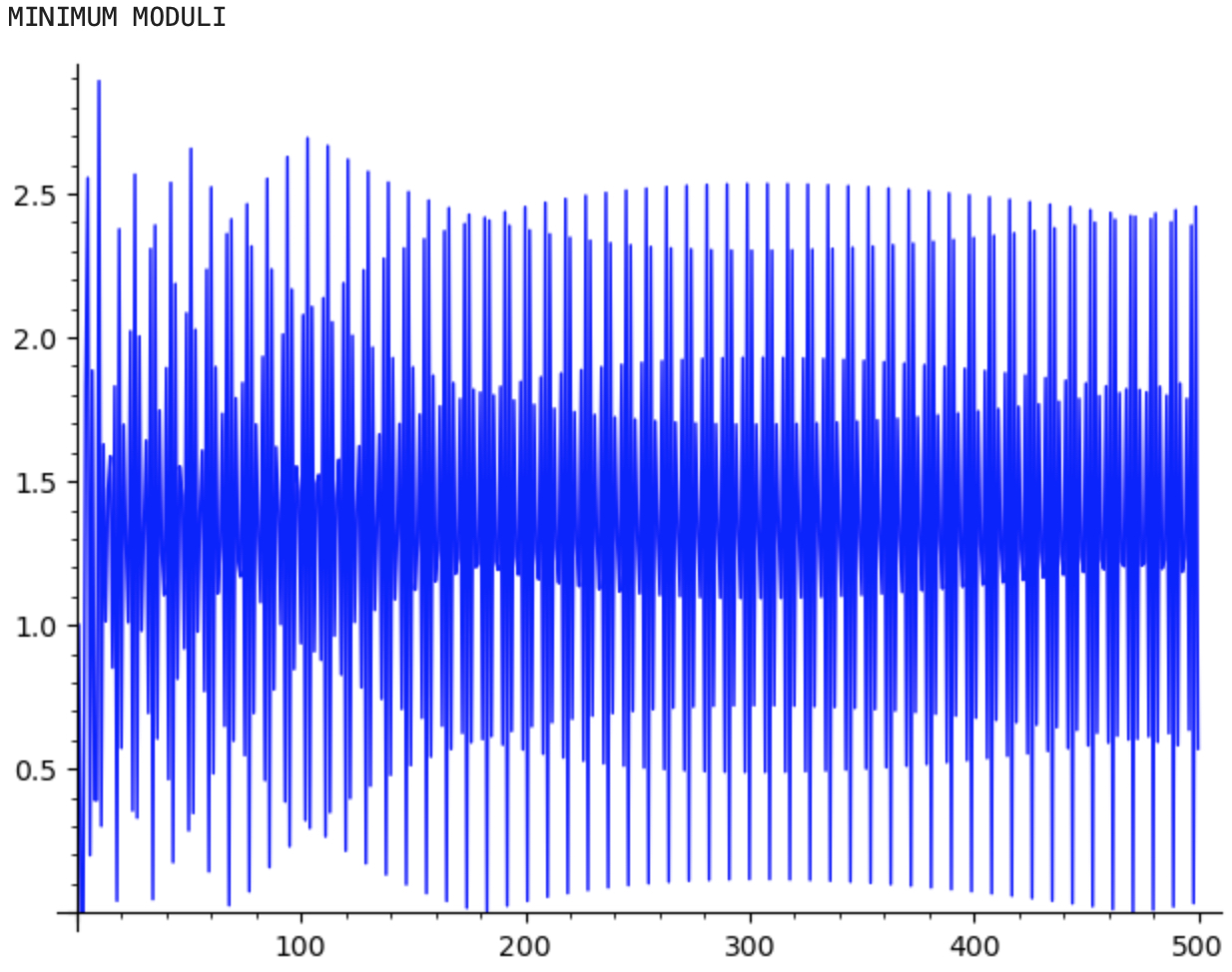}
        \caption{$h_n = a(p_n)$.}
        \label{fig:crv26a1_primes}
    \end{subfigure}
    \begin{subfigure}[t]{0.48\textwidth}
        \centering
        \includegraphics[width=\textwidth]{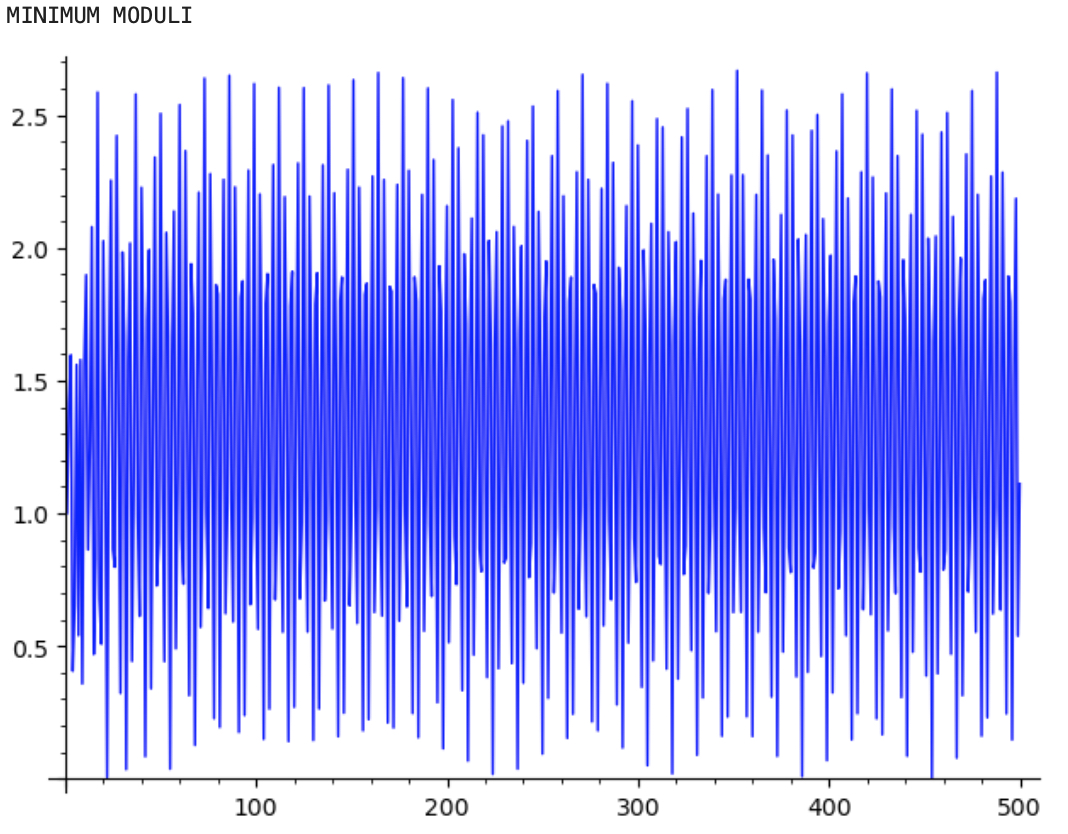}
        \caption{$h_n = a(p_n + 1)$.}
        \label{fig:curve26a1_primes_plus1}
    \end{subfigure}
    \caption{Cremona curve 26a1 with $c = 1$.}
    \label{fig:crv_26a1_comparison}
\end{figure}
\begin{figure}[H]
    \centering
    \begin{subfigure}[t]{0.48\textwidth}
        \centering
        \includegraphics[width=\textwidth]{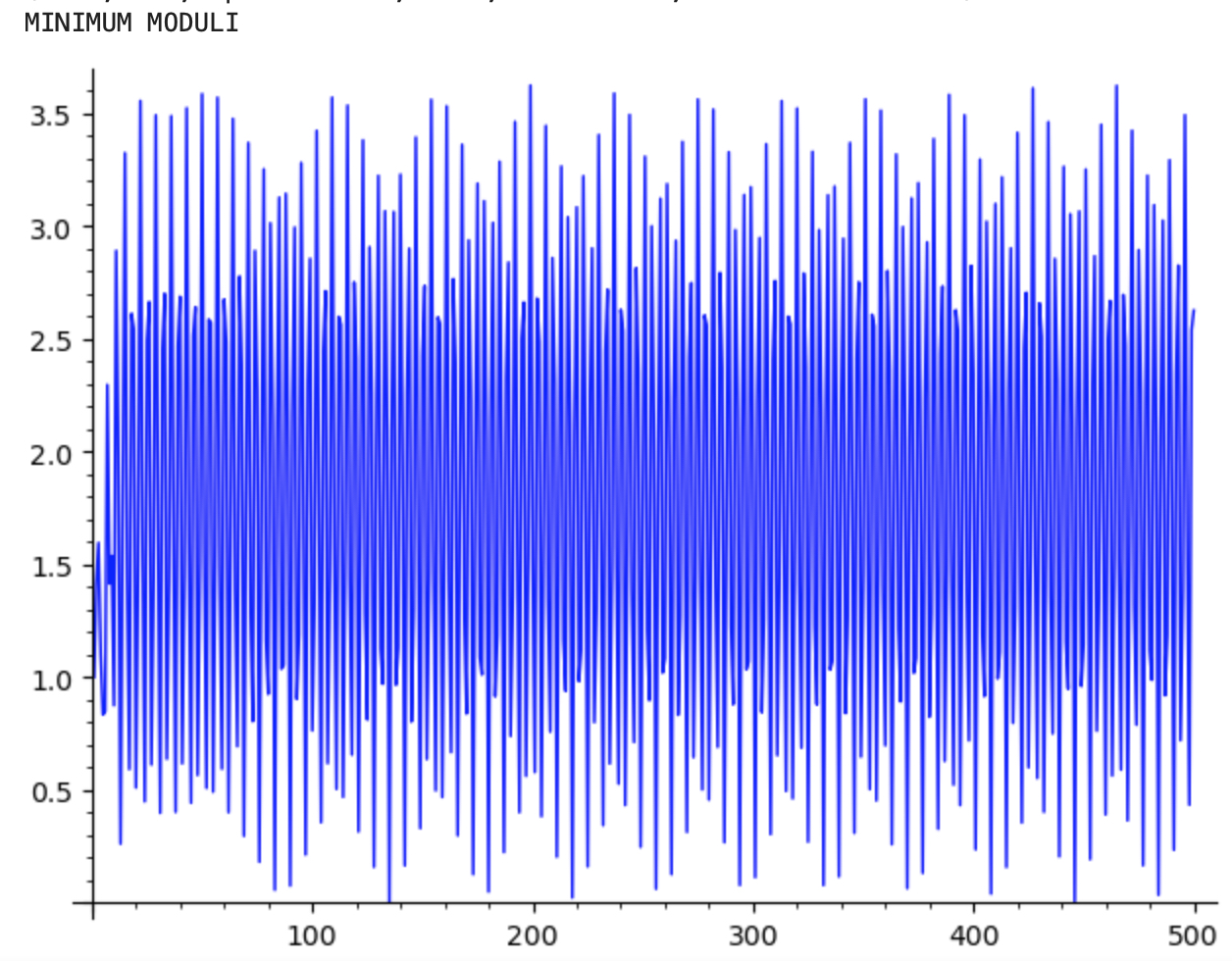}
        \caption{$h_n=a(n)$.}
        \label{fig:crv26b1_all}
    \end{subfigure}
    \hfill
    \begin{subfigure}[t]{0.48\textwidth}
        \centering
        \includegraphics[width=\textwidth]{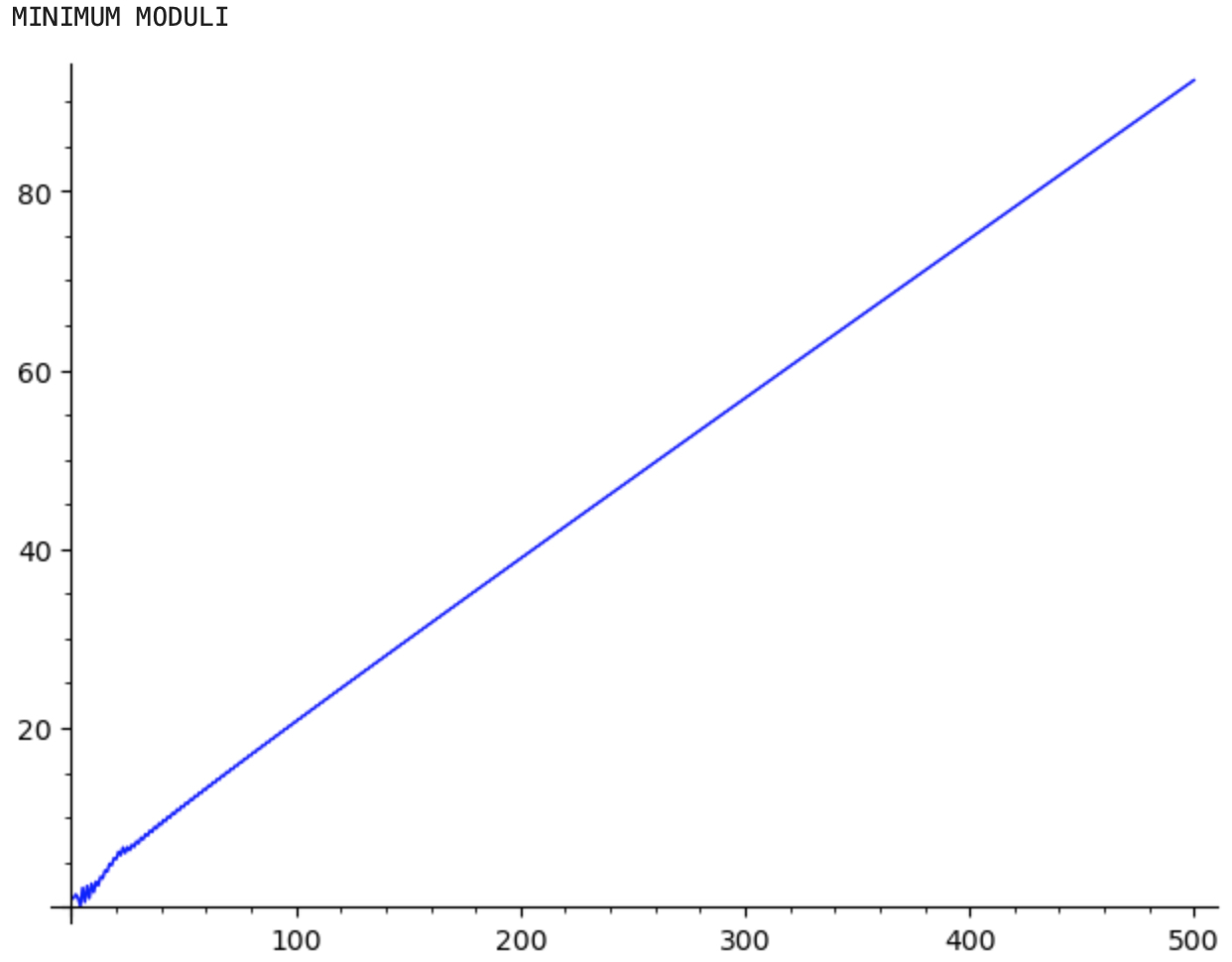}
        \caption{$h_n = a(p_n)$.}
        \label{fig:crv26b1_primes}
    \end{subfigure}
    \begin{subfigure}[t]{0.48\textwidth}
        \centering
        \includegraphics[width=\textwidth]{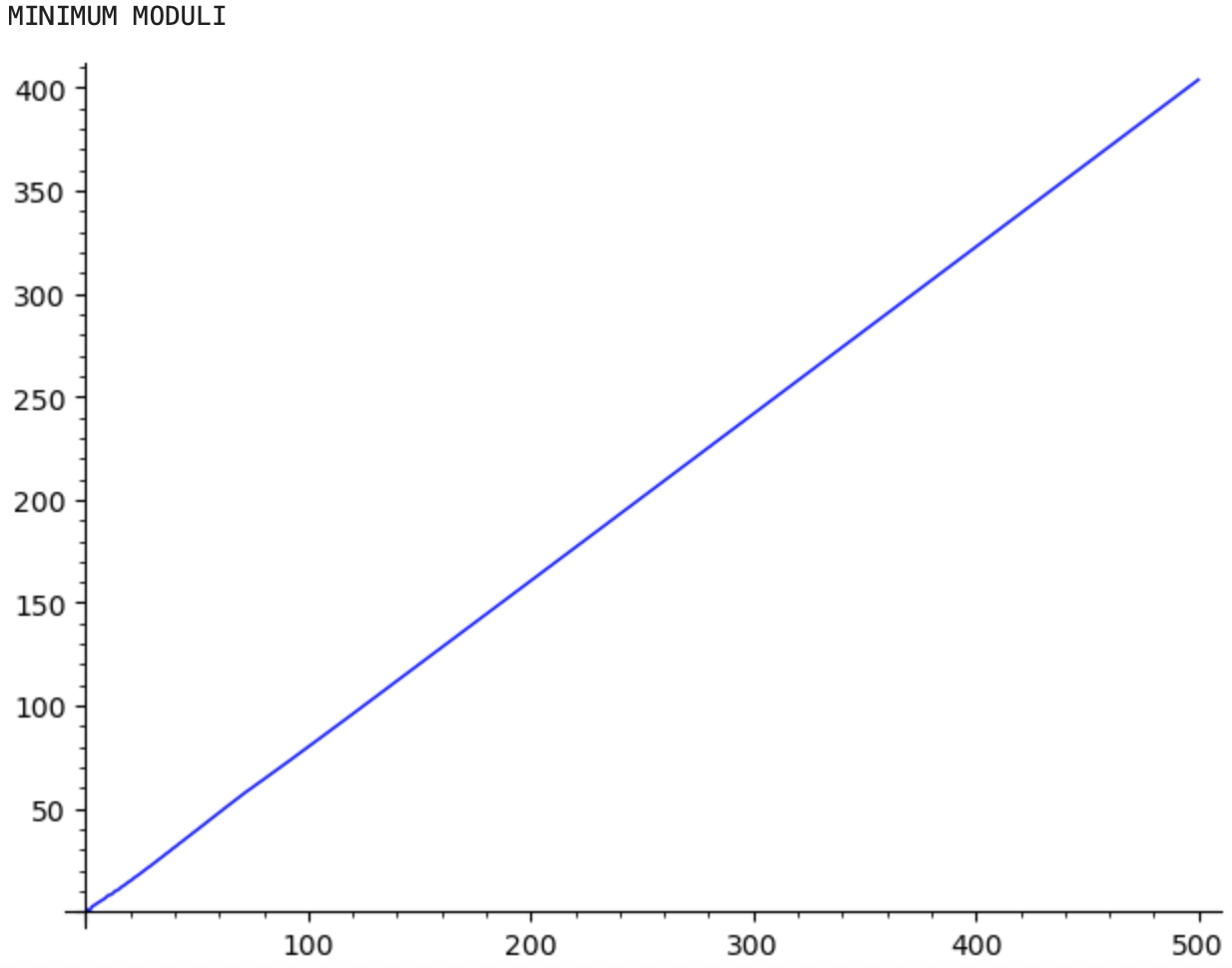}
        \caption{$h_n = a(p_n + 1)$.}
        \label{fig:crv26b1_primes_plus1}
    \end{subfigure}
    \caption{Cremona curve 26b1 with $c = 1$.}
    \label{fig:crv_26b1_comparison}
\end{figure}
\begin{figure}[H]
    \centering
    \begin{subfigure}[t]{0.48\textwidth}
        \centering
        \includegraphics[width=\textwidth]{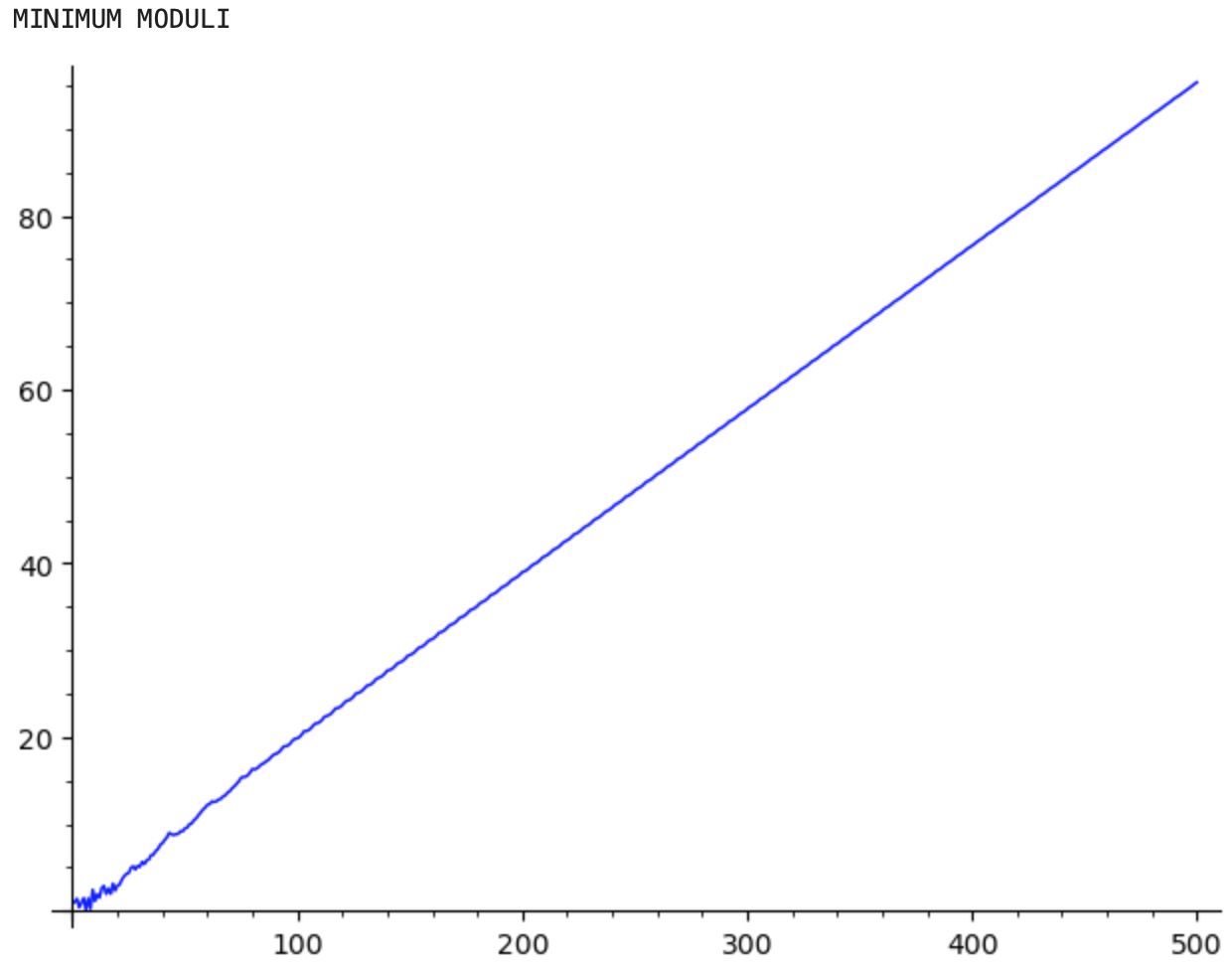}
        \caption{$h_n=a(n)$.}
        \label{fig:crv27a1_all}
    \end{subfigure}
    \hfill
    \begin{subfigure}[t]{0.48\textwidth}
        \centering
        \includegraphics[width=\textwidth]{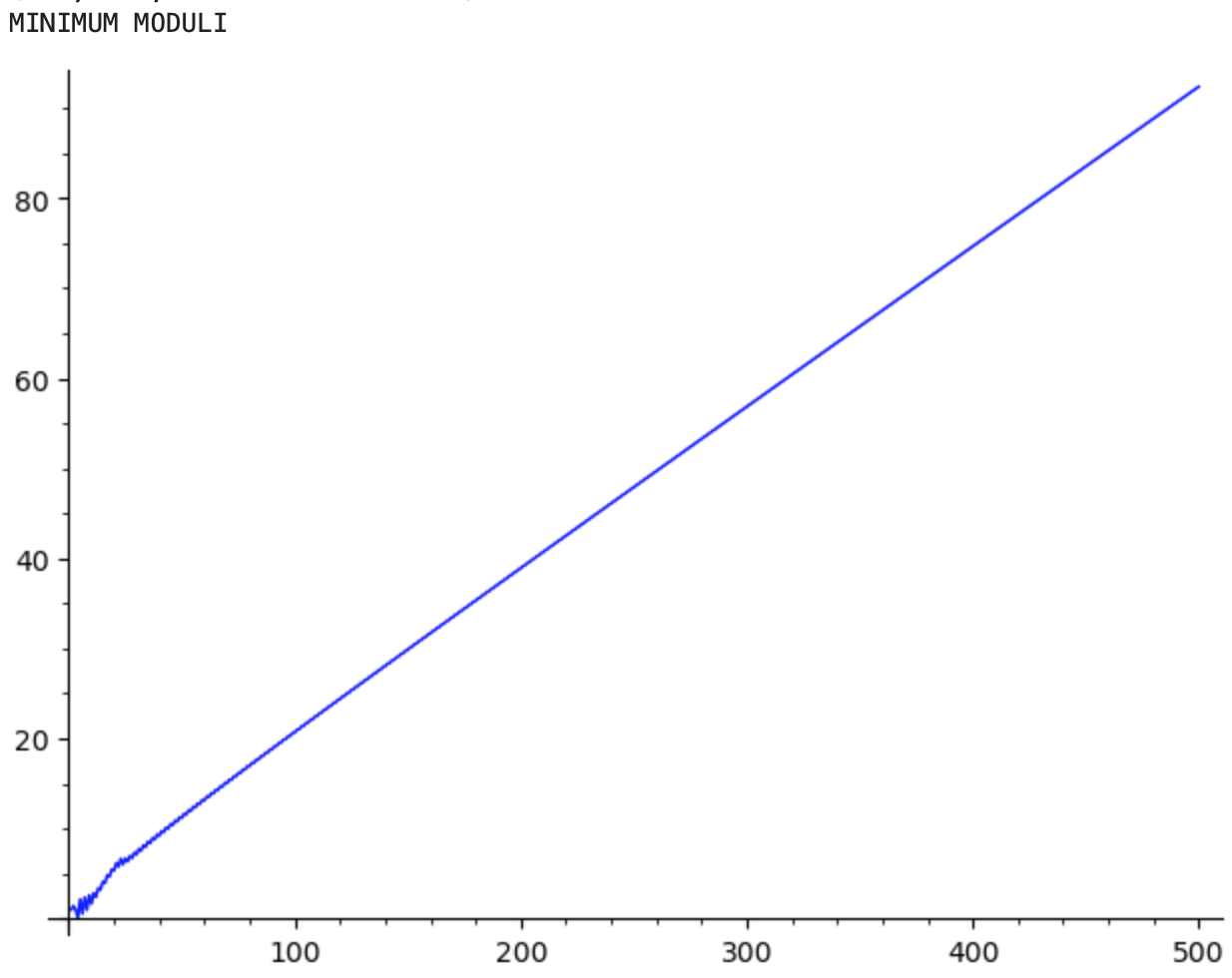}
        \caption{$h_n = a(p_n)$.}
        \label{fig:crv27a1_primes}
    \end{subfigure}
    \begin{subfigure}[t]{0.48\textwidth}
        \centering
        \includegraphics[width=\textwidth]{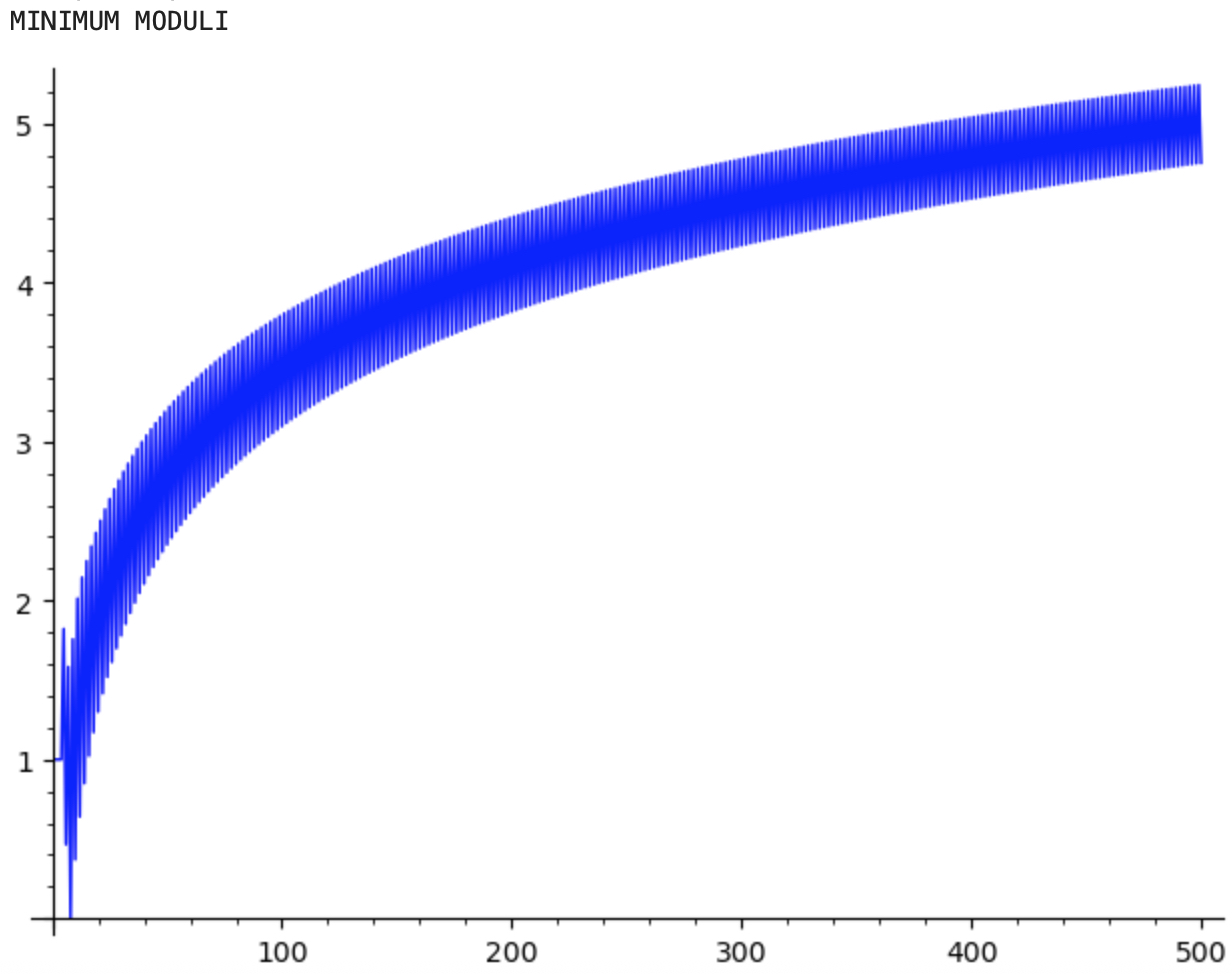}
        \caption{$h_n = a(p_n + 1)$.}
        \label{fig:crv27a1_primes_plus1}
    \end{subfigure}
    \caption{Cremona curve 27a1 with $c = 1$.}
    \label{fig:crv27a1_comparison}
\end{figure}
\begin{figure}[H]
    \centering
    \begin{subfigure}[t]{0.48\textwidth}
        \centering
        \includegraphics[width=1\textwidth]{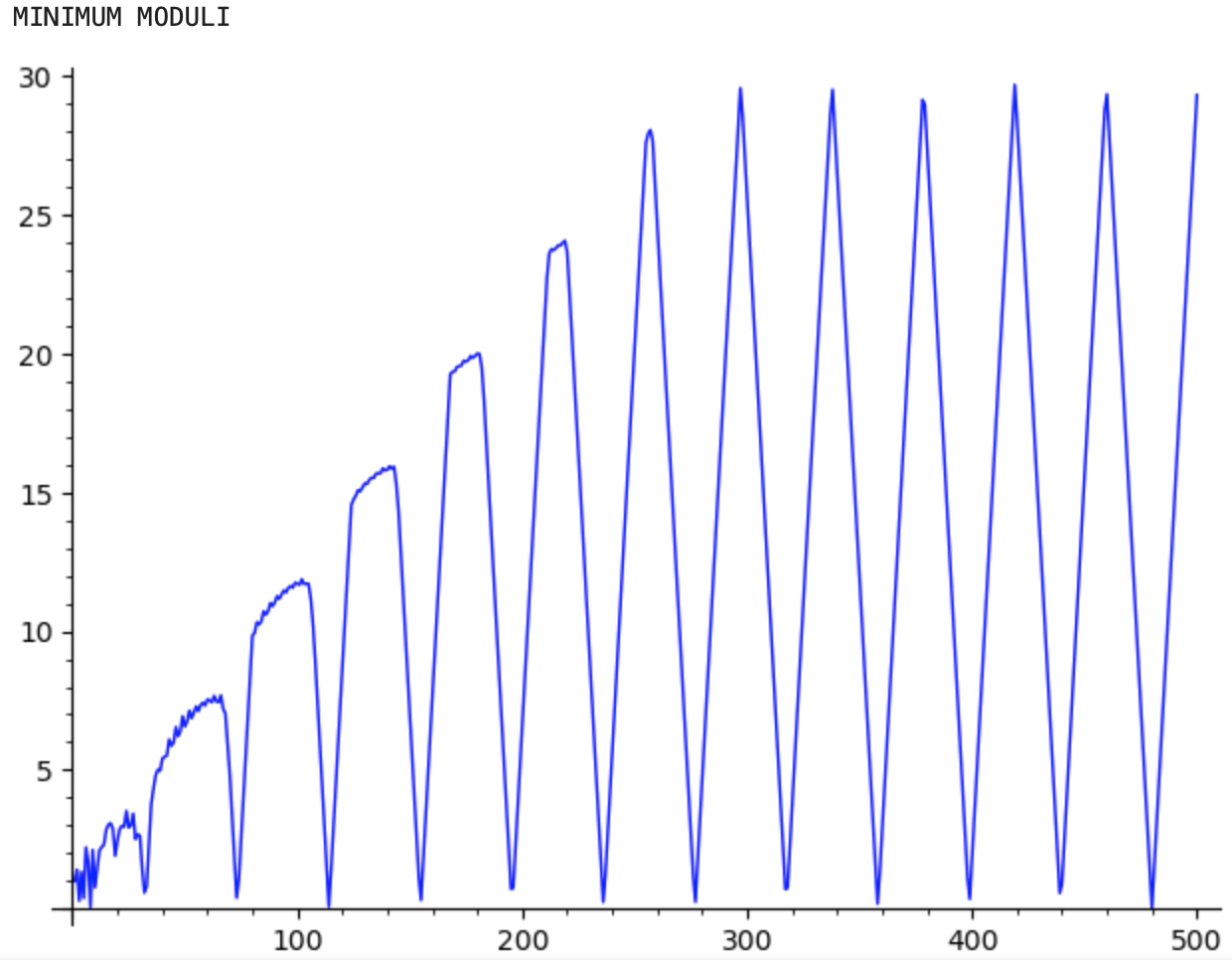}
        \caption{$h_n=a(n)$.}
        \label{fig:crv30a1_all}
    \end{subfigure}
    \begin{subfigure}[t]{0.48\textwidth}
        \centering
        \includegraphics[width=1\textwidth]{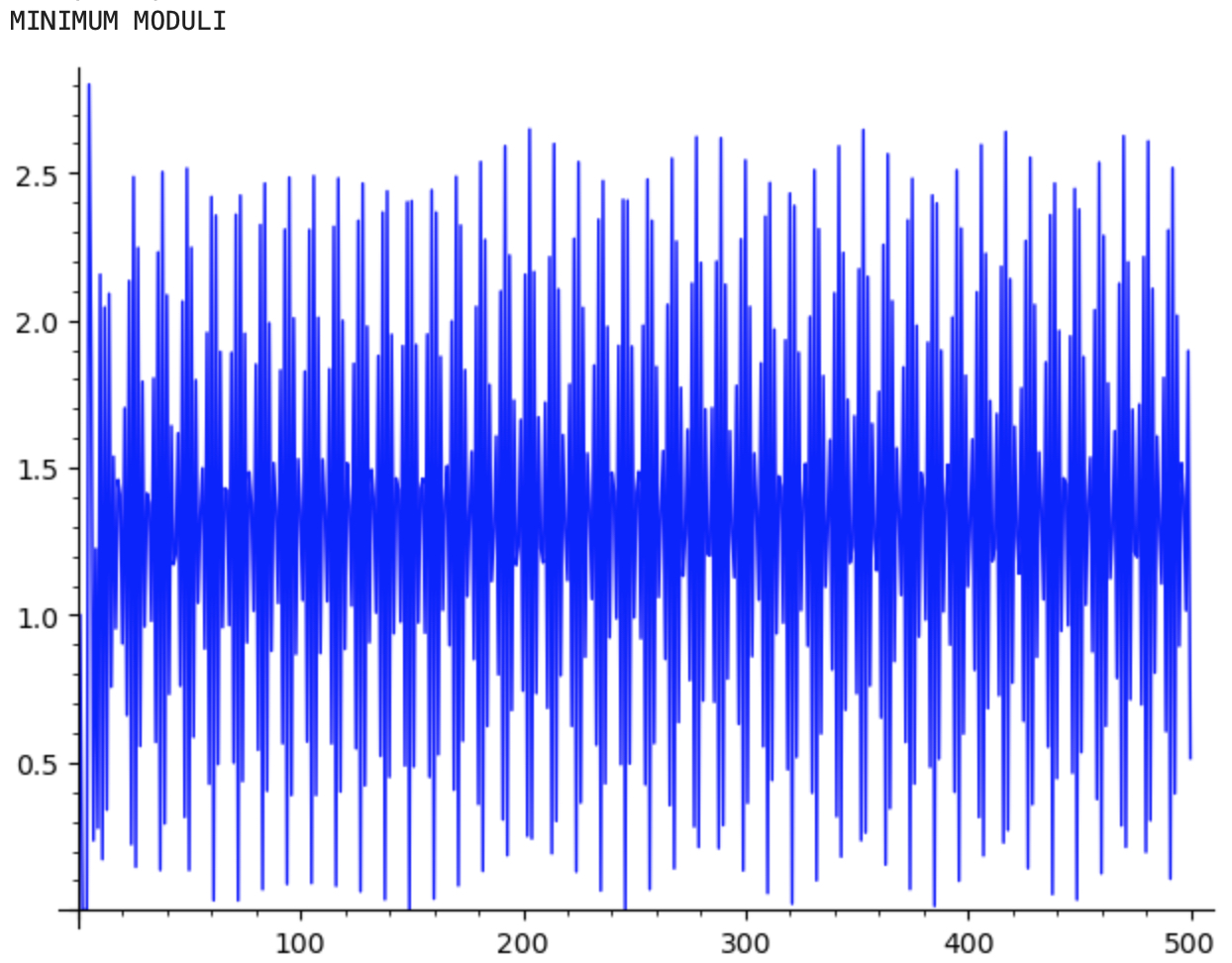}
        \caption{$h_n=a(p_n)$.}
        \label{fig:crv30a1_primes}
    \end{subfigure}
    \begin{subfigure}[t]{0.48\textwidth}
        \centering
        \includegraphics[width=1\textwidth]{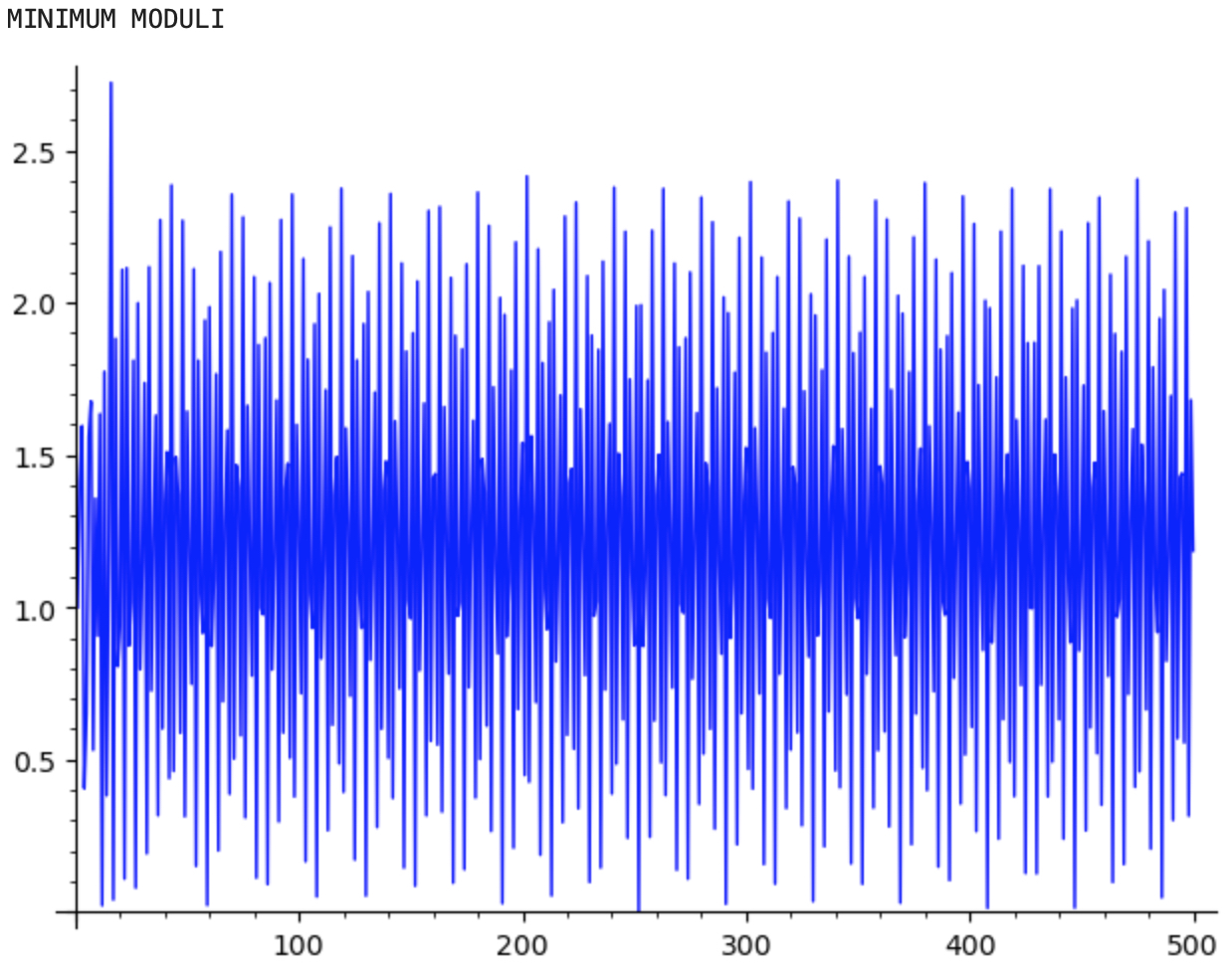}
        \caption{$h_n=a(p_n + 1)$.}
        \label{fig:crv30a1_primes_plus1}
    \end{subfigure}
     \caption{Cremona curve 30a1 with $c = 1$.}
    \label{fig:crv30a1_comparison}
\end{figure}
\begin{figure}[H]
    \centering
    \begin{subfigure}[t]{0.48\textwidth}
     \centering
    \includegraphics[width=1\textwidth]{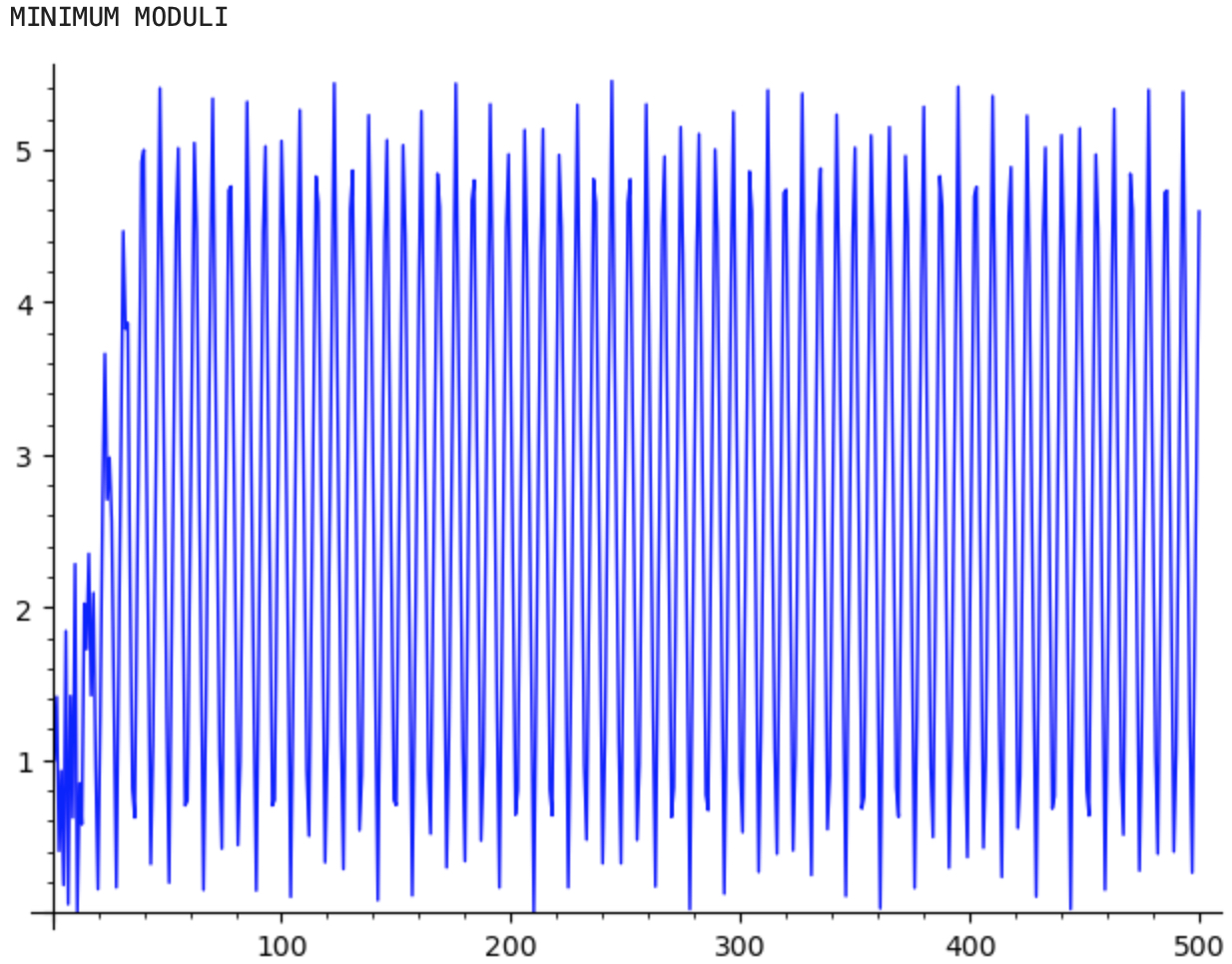}
    \caption{$h_n=a(n)$.}
    \label{fig:crv32a1_all}
     \end{subfigure}    
    \begin{subfigure}[t]{0.48\textwidth}
     \centering
    \includegraphics[width=1\textwidth]{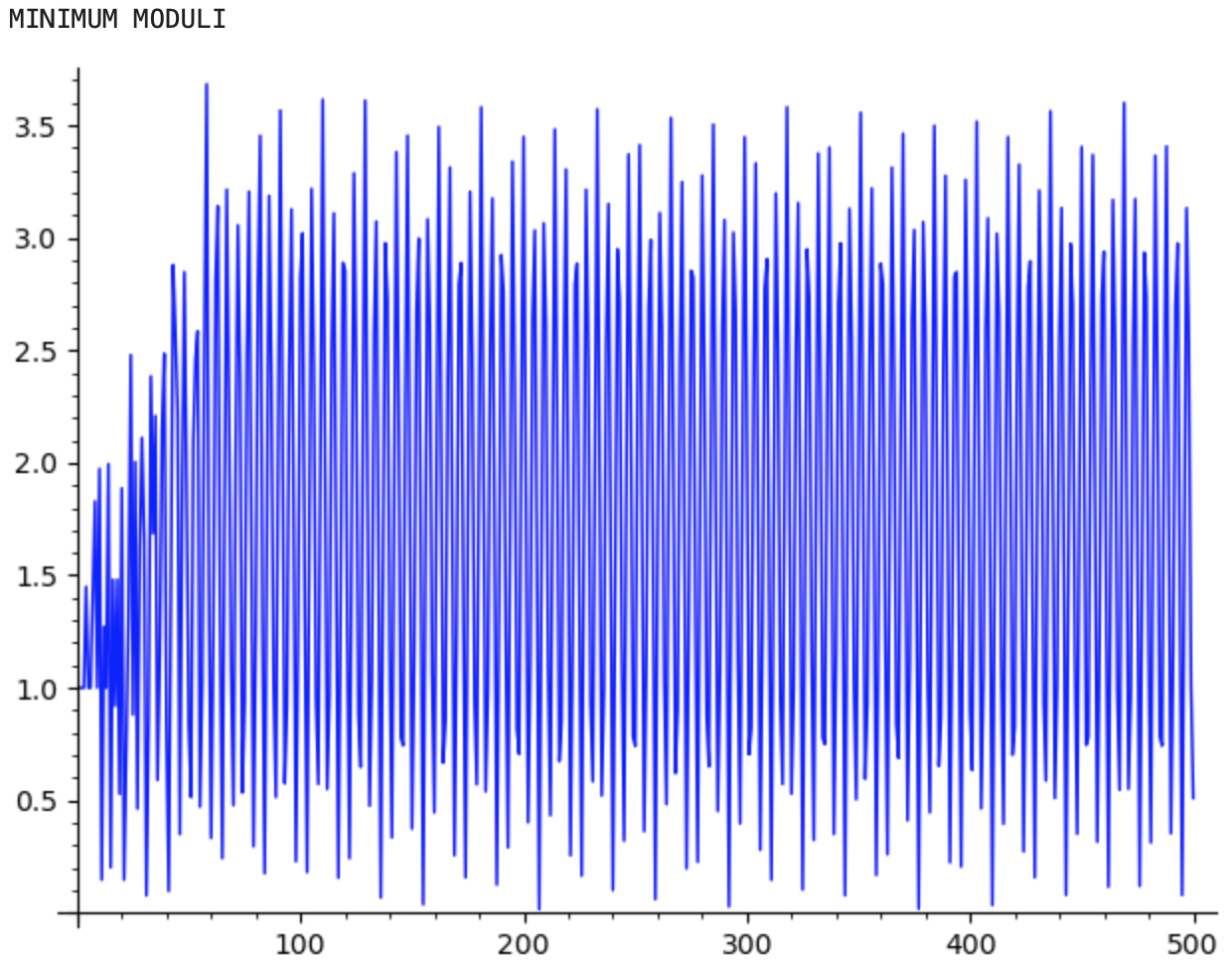}
    \caption{$h_n=a(p_n)$.}
    \label{fig:crv32a1_primes}
     \end{subfigure}  
     \begin{subfigure}[t]{0.48\textwidth}
     \centering
    \includegraphics[width=1\textwidth]{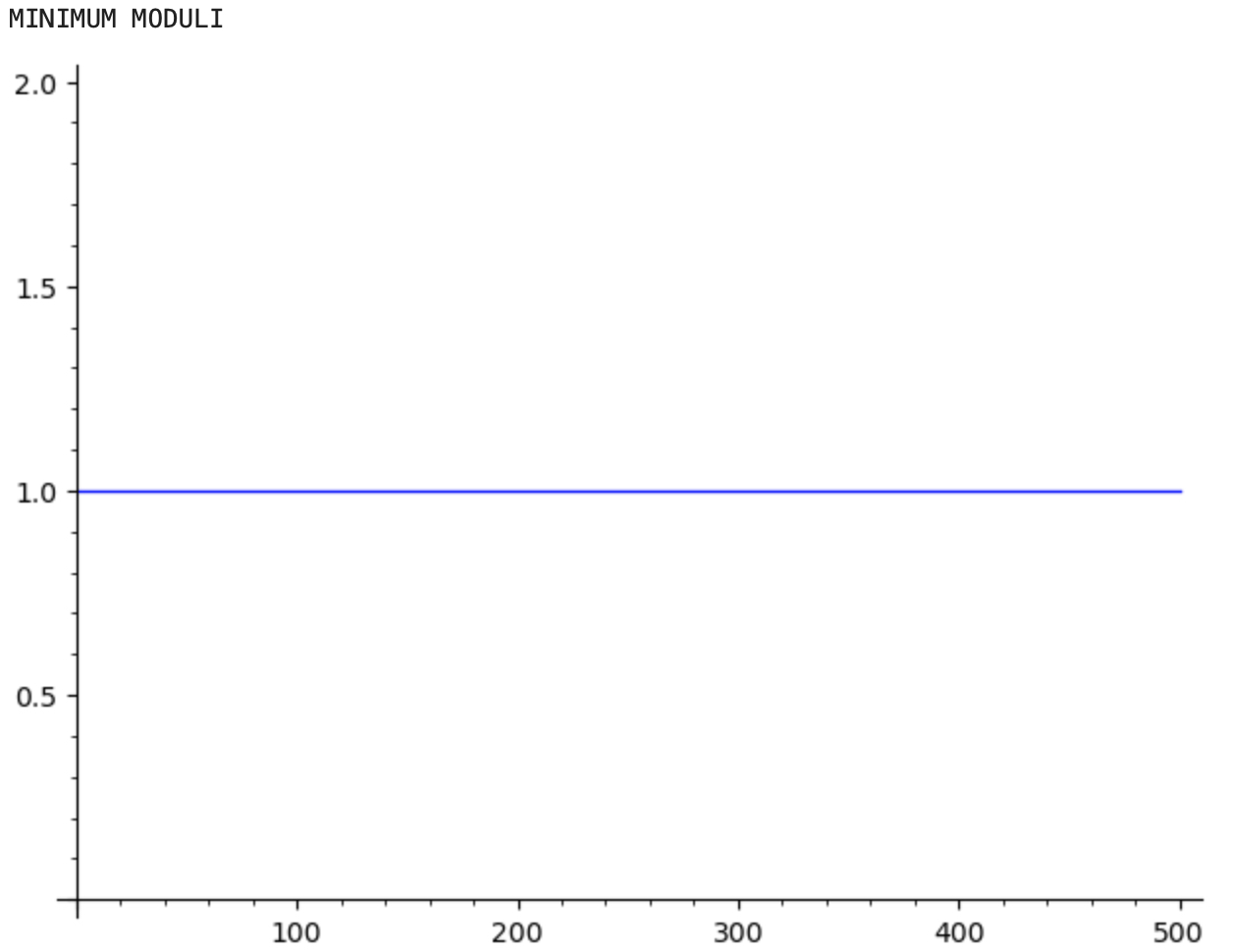}
    \caption{$h_n=a(p_n+1)$.}
    \label{fig:crv32a1_p_plus1}
     \end{subfigure}  
      \caption{Cremona curve 32a1 with $c = 1$.}
\end{figure}
\begin{figure}
\centering
    \hfill
    \begin{subfigure}[t]{0.48\textwidth}
        \centering
\includegraphics[width=\textwidth]
        {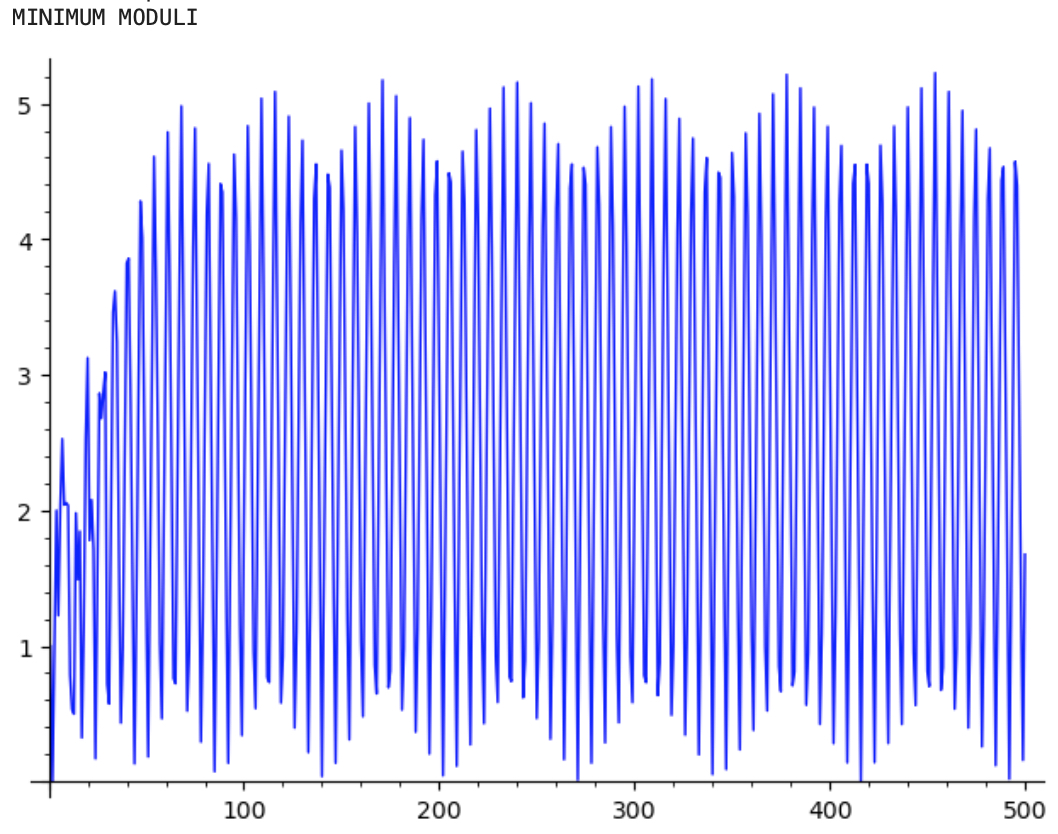}
        \caption{ $h_n = p_{n+1}-p_n-2$.}
        \label{fig:pph}
    \end{subfigure}
     \begin{subfigure}[t]{0.48\textwidth}
        \centering
\includegraphics[width=\textwidth]
        {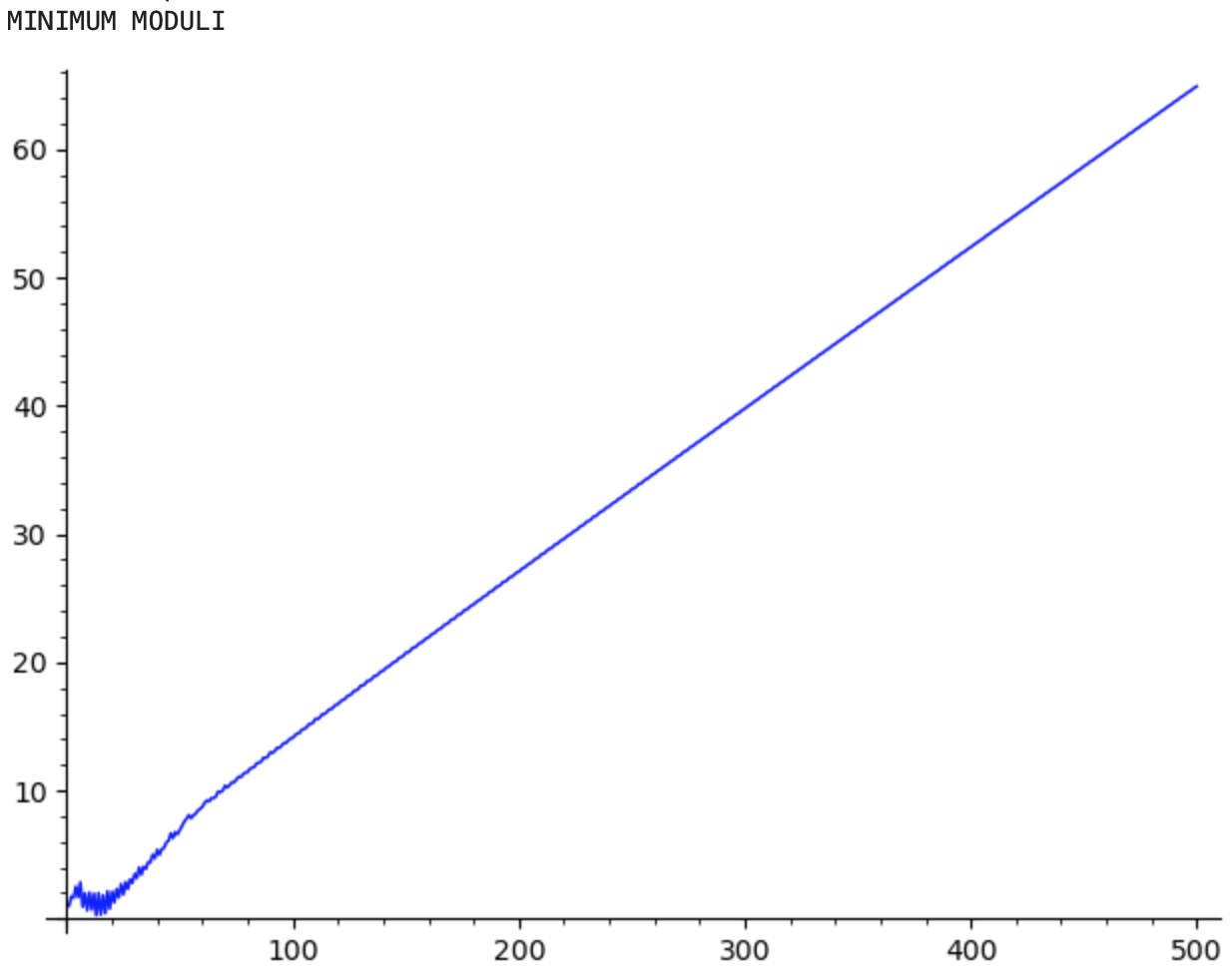}
        \caption{ $h_n = p_n$.}
    \label{fig:48B}
    \end{subfigure}
    \begin{subfigure}[t]{0.48\textwidth}
       \centering
\includegraphics[width=\textwidth]
    {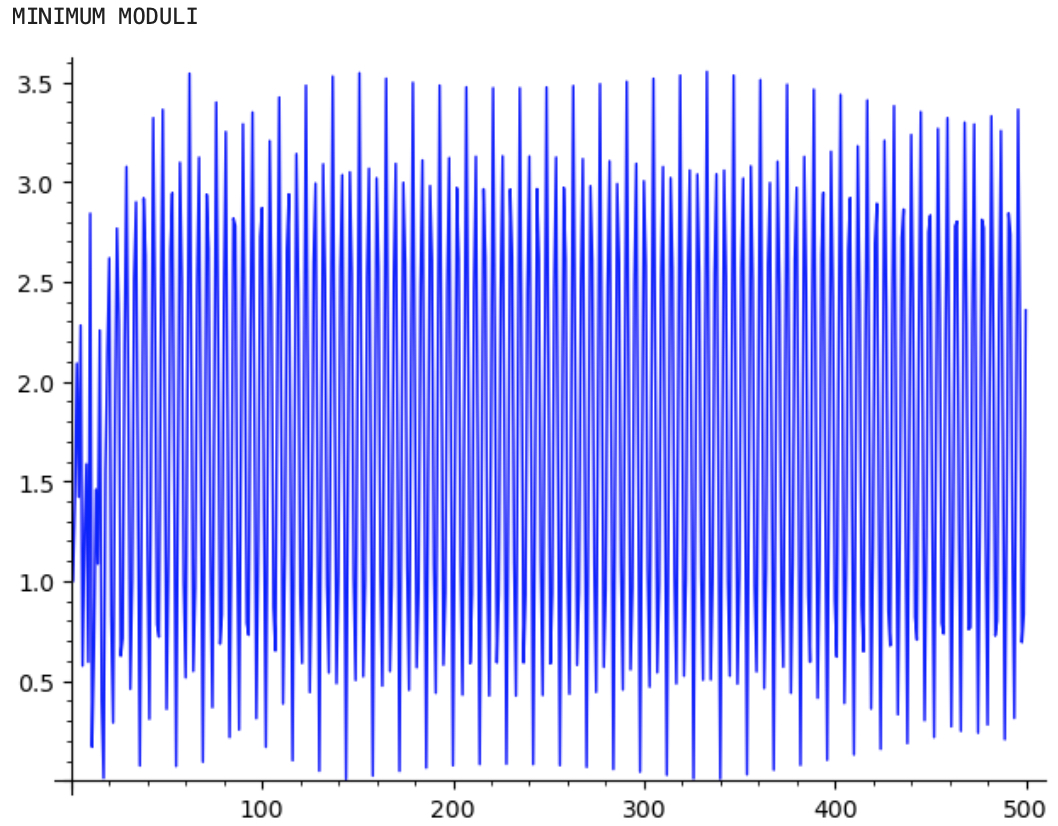}
      \caption{$h_n = p_{n+1}-p_n$.}
       \label{fig:prime_diffs}
    \end{subfigure}
    \caption{Simple $h$ sequences with $c = 1$.}
\end{figure}
\clearpage
\begin{figure}[H]
    \centering
    \begin{subfigure}[t]{0.48\textwidth}
        \centering
\includegraphics[width=\textwidth]{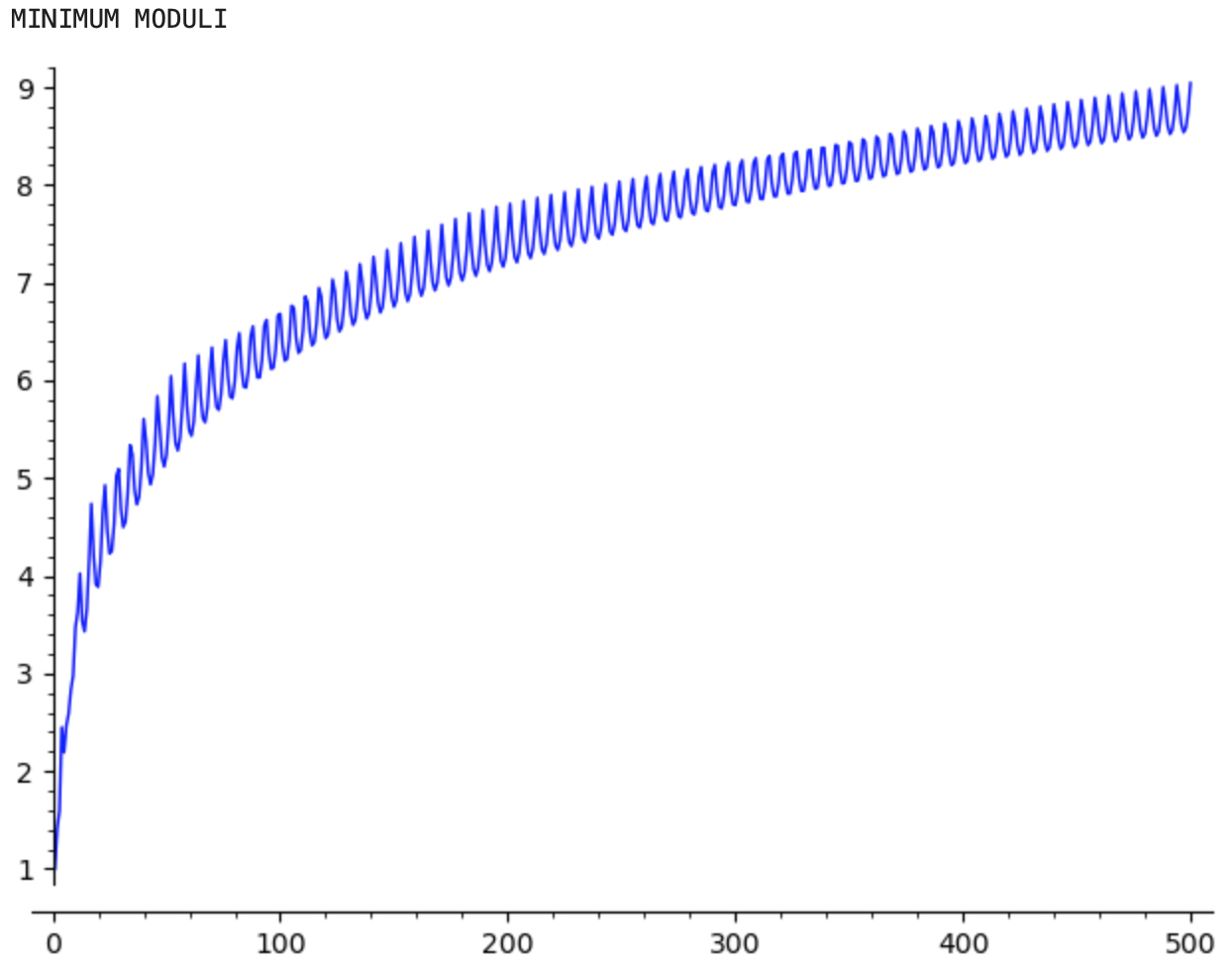}
        \caption{$a=1$.}
        \label{fig:h(n)=n}
    \end{subfigure}
   \begin{subfigure}[t]{0.48\textwidth}
        \centering
\includegraphics[width=\textwidth]{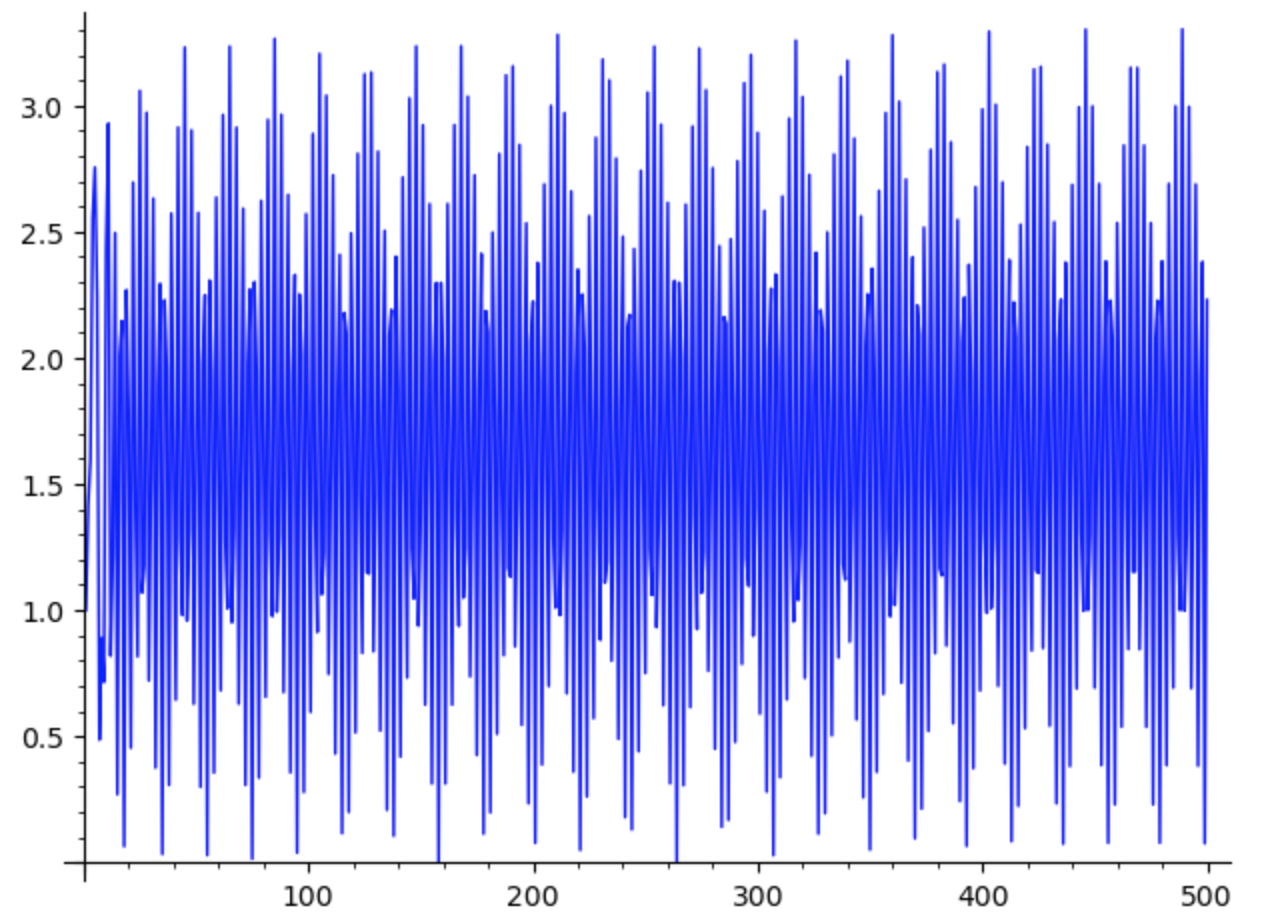}
        \caption{$a=2$.}
        \label{fig:h(n)=n^2}
    \end{subfigure}
    \begin{subfigure}[t]{0.48\textwidth}
        \centering
\includegraphics[width=\textwidth]{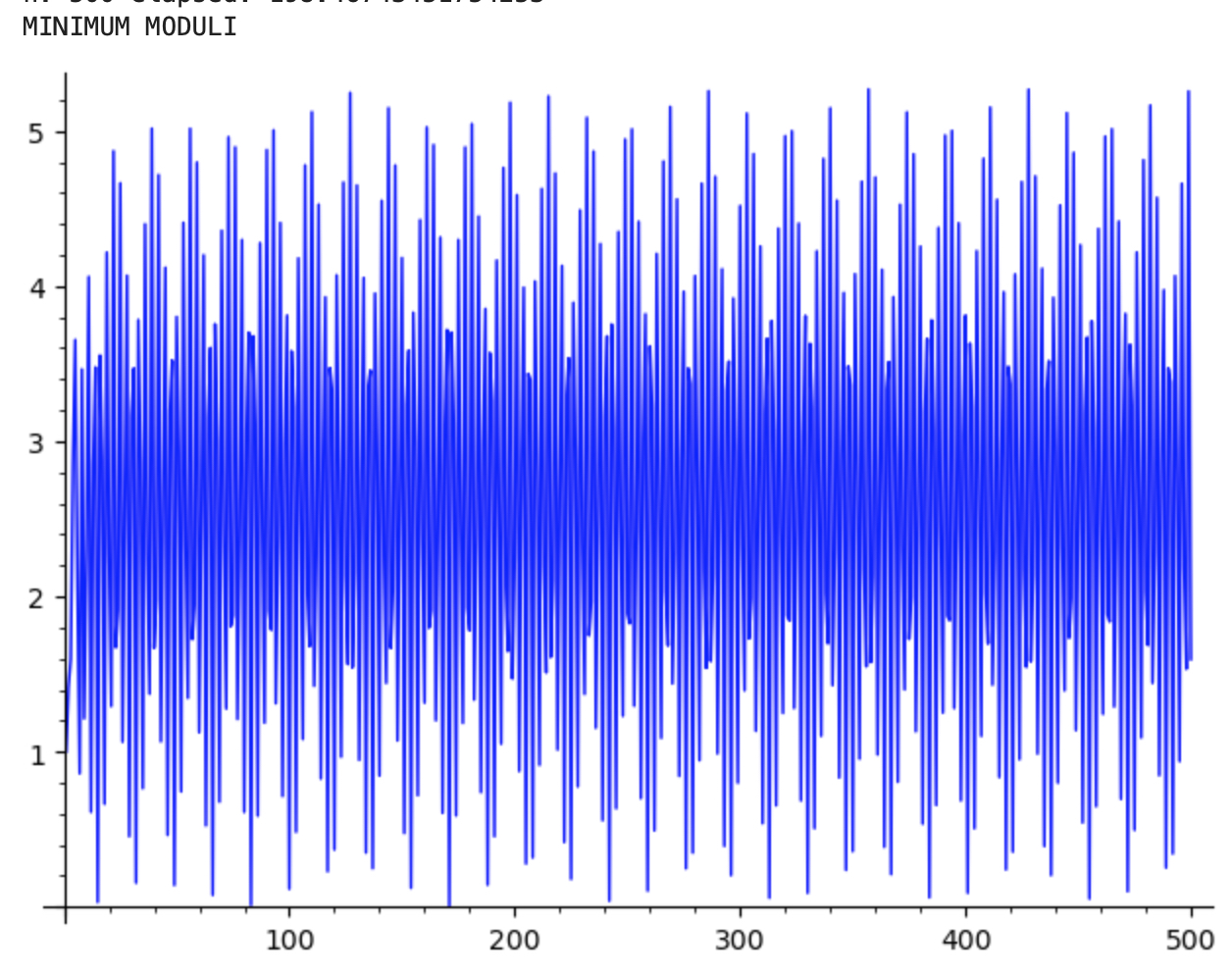}
        \caption{$a=3$.}
        \label{fig:h(n)=n^3}
    \end{subfigure}
     \begin{subfigure}[t]{0.48\textwidth}
        \centering
\includegraphics[width=\textwidth]{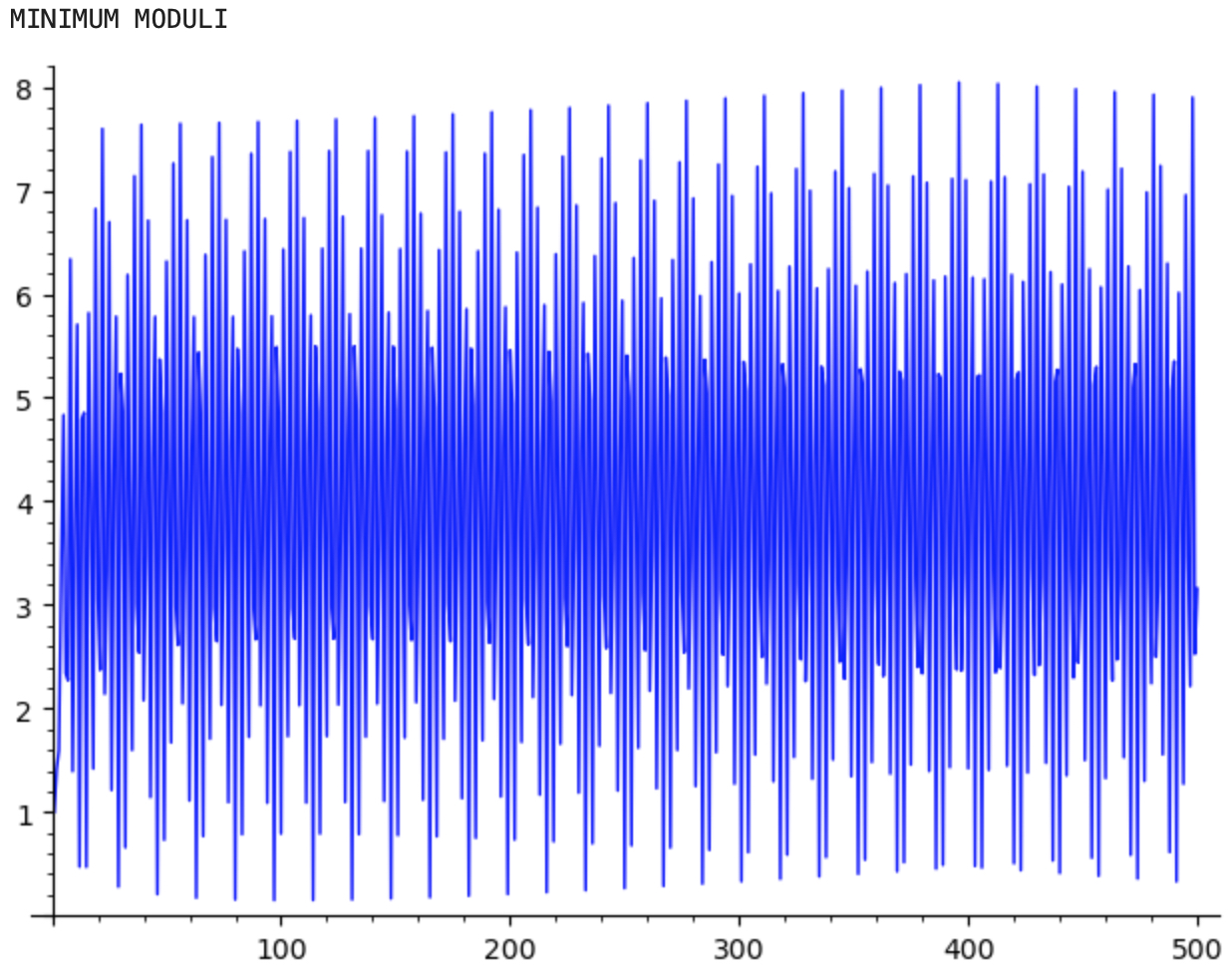}
        \caption{$a=4$.}
        \label{fig:h(n)=n^4}
    \end{subfigure}
    \caption{$h_n = n^a, 1 \le a \le 4$ with $c=1$. }
\end{figure}

\section{Appendix 1: index of figures}
The notebooks and the associated data files are stored at the listed URLs. 
\begin{longtable}{ll}
\caption*{}
\label{tab:figure-references} \\
\hline
\textbf{Figure} & \textbf{Notebook} \\
\hline
\endfirsthead
\hline
\textbf{Figure} & \textbf{Notebook} \\
\hline
\endhead
\hline
\endfoot
Figure 1A & \texttt{https://zenodo.org/records/21498743
}\\
Figure 1B & \texttt{https://zenodo.org/records/21499463
}\\
Figure 1C & \texttt{https://zenodo.org/records/21499645
}\\
Figure 2  & \texttt{https://zenodo.org/records/21499737
} \\
Figure 3  & \texttt{https://zenodo.org/records/21499926} \\
Figure 4  & \texttt{https://zenodo.org/records/21500027} \\
Figure 5  & 
\texttt{https://zenodo.org/records/21500153} \\
Figure 6  & \texttt{https://zenodo.org/records/21500362
} \\
Figure 7  & \texttt{https://zenodo.org/records/21500397
} \\
Figure 8A  & \texttt{https://zenodo.org/records/21334934} \\
Figure 8B  & \texttt{https://zenodo.org/records/21500444} \\
Figure 8C  & \texttt{https://zenodo.org/records/21500474} \\
Figure 9A & \texttt{https://zenodo.org/records/21343877} \\
Figure 9B  & \texttt{https://zenodo.org/records/21500582} \\
Figure 9C  & \texttt{https://zenodo.org/records/21344014} \\
Figure 10A & \texttt{https://zenodo.org/records/21347091} \\
Figure 10B & \texttt{https://zenodo.org/records/21500637} \\
Figure 10C & \texttt{https://zenodo.org/records/21347379} \\
Figure 11A & \texttt{https://zenodo.org/records/21500717
} \\
Figure 11B & \texttt{https://zenodo.org/records/21348988} \\
Figure 11C & \texttt{https://zenodo.org/records/21349261
} \\
Figure 12A & \texttt{https://zenodo.org/records/21366772
} \\
Figure 12B & \texttt{https://zenodo.org/records/21501779} \\
Figure 12C & \texttt{https://zenodo.org/records/21501835} \\
Figure 13A & \texttt{https://zenodo.org/records/21381035} \\
Figure 13B & \texttt{https://zenodo.org/records/21380598} \\
Figure 13C & \texttt{https://zenodo.org/records/21501927} \\
Figure 14A & \texttt{https://zenodo.org/records/21502012} \\
Figure 14B & \texttt{https://zenodo.org/records/21384000} \\
Figure 14C & \texttt{https://zenodo.org/records/21384030} \\
Figure 15A & \texttt{https://zenodo.org/records/21386313} \\
Figure 15B & \texttt{https://zenodo.org/records/21502075} \\
Figure 15C & \texttt{https://zenodo.org/records/21502189} \\
Figure 16A & \texttt{https://zenodo.org/records/21386106} \\
Figure 16B & \texttt{https://zenodo.org/records/21393699} \\
Figure 16C & \texttt{https://zenodo.org/records/21502224} \\
Figure 17A & \texttt{https://zenodo.org/records/21502284} \\
Figure 17B & \texttt{https://zenodo.org/records/21393532} \\
Figure 17C & \texttt{https://zenodo.org/records/21502412} \\
Figure 18A & \texttt{https://zenodo.org/records/21502481} \\
Figure 18B & \texttt{https://zenodo.org/records/21544634} \\
Figure 18C & \texttt{https://zenodo.org/records/21544718} \\
Figure 19A & \texttt{https://zenodo.org/records/21403041} \\
Figure 19B & \texttt{https://zenodo.org/records/21411071} \\
Figure 19C & \texttt{https://zenodo.org/records/21410675} \\
Figure 20A & \texttt{https://zenodo.org/records/21420653} \\
Figure 20B & \texttt{https://zenodo.org/records/21421288} \\
Figure 20C & \texttt{https://zenodo.org/records/21544791} \\
Figure 21A & \texttt{https://zenodo.org/records/21574898}\\
Figure 21B & \texttt{https://zenodo.org/records/21687161}\\
Figure 21C & \texttt{https://zenodo.org/records/21575214}\\
Figure 22A & \texttt{https://zenodo.org/records/21340487}\\
Figure 22B & \texttt{https://zenodo.org/records/21340575}\\
Figure 22C & \texttt{https://zenodo.org/records/21340648}\\
Figure 22D & \texttt{https://zenodo.org/records/21340835}\\
\hline
\end{longtable}

\clearpage
\begin{thebibliography}{LMFDB}
\bibitem[Ant26]{Anthropic2026} Anthropic, \textit{Claude}, 2026.
\url{https://claude.ai}.
\bibitem[BCDT01]{BCDT} C.~Breuil, B.~Conrad, F.~Diamond, and R.~Taylor, \textit{On the modularity of elliptic curves over $\mathbf{Q}$: wild 3-adic exercises}, J.\ Amer.\ Math.\ Soc.\ \textbf{14} (2001), 843--939.
\bibitem[Has36]{Hasse36} H.~Hasse, \textit{Zur Theorie der abstrakten elliptischen Funktionenk\"orper III. Die Struktur des Meromorphismenrings. Die Riemannsche Vermutung}, J.\ Reine Angew.\ Math.\ \textbf{175} (1936), 193--208.

\bibitem[HJ12]{Horn2012} Roger A. Horn and Charles R. Johnson, \textit{Matrix Analysis}, Cambridge University Press, 2012.
\bibitem[Leh47]{Le47} D.~H. Lehmer, \textit{The vanishing of Ramanujan's function $\tau(n)$}, Duke Math.\ J.\ \textbf{14} (1947), 429--433, \url{https://zenodo.org/records/21779774}.
\bibitem[LMFDB]{LMFDB} The LMFDB Collaboration,
\textit{The {L}-functions and Modular Forms Database},
\url{https://www.lmfdb.org}, 2025.
\bibitem[Mac95]{M} I.~G. Macdonald, \textit{Symmetric Functions and Hall Polynomials}, 2nd ed., Clarendon Press, Oxford, 1995.
\bibitem[Mar18]{Marcus} D.~A.~Marcus, \textit{Number Fields}, 2nd ed., Universitext, Springer, 2018.
\bibitem[Orr18]{Orr} M.~Orr, \textit{Algebraic Number Theory}, lecture notes, University of Manchester, 2018--19. Available at \url{https://personalpages.manchester.ac.uk/staff/Martin.Orr/2018-9/ant/lectures.pdf}.
\bibitem[Sil09]{Silverman2009} J.~H. Silverman, \textit{The Arithmetic of Elliptic Curves}, 2nd ed., Graduate Texts in Mathematics \textbf{106}, Springer, 2009.

\bibitem[VD99]{Vein1999} Robert Vein and Paul Dale,
\textit{Determinants and their Applications in Mathematical Physics},
Springer, 1999.
\bibitem[TW95]{TW} R.~Taylor and A.~Wiles, \textit{Ring-theoretic properties of certain Hecke algebras}, Ann.\ of Math.\ (2) \textbf{141} (1995), 553--572.
\bibitem[Wil95]{Wiles} A.~Wiles, \textit{Modular elliptic curves and Fermat's Last Theorem}, Ann.\ of Math.\ (2) \textbf{141} (1995), 443--551.
\end{thebibliography}
\end{document}